\newcommand{\BEAS}{\begin{eqnarray*}}
\newcommand{\EEAS}{\end{eqnarray*}}
\newcommand{\BEQ}{\begin{equation}}
\newcommand{\EEQ}{\end{equation}}
\newcommand{\BIT}{\begin{itemize}}
\newcommand{\EIT}{\end{itemize}}
\newcommand{\eg}{{\it e.g.}}
\newcommand{\ie}{{\it i.e.}}
\newcommand{\cf}{{\it c.f.}}
\newcommand{\reals}{{\mbox{\bf R}}}
\newcommand{\argmin}{\mathop{\rm argmin}}
\newcommand{\argmax}{\mathop{\rm argmax}}
\newtheorem{theorem}{Theorem}
\newtheorem{lemma}[theorem]{Lemma}
\newtheorem{corollary}[theorem]{Corollary}
\newtheorem{proposition}[theorem]{Proposition}
\theoremstyle{definition}
\long\def\@makecaption#1#2{
   \vskip 9pt
   \begin{small}
   \setbox\@tempboxa\hbox{{\bf #1:} #2}
   \ifdim \wd\@tempboxa > 5.5in
        \begin{center}
        \begin{minipage}[t]{5.5in}
        \addtolength{\baselineskip}{-0.95pt}
        {\bf #1:} #2 \par
        \addtolength{\baselineskip}{0.95pt}
        \end{minipage}
        \end{center}
   \else
	\hbox to\hsize{\hfil\box\@tempboxa\hfil}
   \fi
   \end{small}\par
}
\newcounter{oursection}
\newcounter{lecture}
\title{Globally Convergent Type-I Anderson Acceleration for Non-Smooth
Fixed-Point Iterations}
\author{ Junzi Zhang
\thanks{ICME, Stanford University. \textbf{Email:} junziz@stanford.edu}
\and Brendan O'Donoghue
\thanks{DeepMind, Google. \textbf{Email:} bodonoghue85@gmail.com}
\and  Stephen Boyd
\thanks{Department of Electrical Engineering, Stanford University.
\textbf{Email:} boyd@stanford.edu} }
\date{\today}
\begin{document}
\maketitle

\begin{abstract}
We consider the application of the type-I Anderson
acceleration (\cite{FangSecant}) to solving general non-smooth
fixed-point problems. By interleaving with safe-guarding steps, and
employing a Powell-type regularization and a re-start checking for
strong linear independence of the updates, we propose the first globally
convergent variant of Anderson acceleration assuming only that the
fixed-point iteration is non-expansive. We show by extensive numerical
experiments that many first order algorithms can be improved,
especially in their terminal convergence,
with the proposed algorithm.  Our proposed method
of acceleration is being implemented in 
SCS 2.0 \cite{SCS-2}, one of the default solvers used in
the convex optimization parser-solver CVXPY 1.0 \cite{cvxpy_rewriting}. 
\end{abstract}

\section{Introduction}

We consider solving the following general fixed-point problem:
\BEQ\label{non_eq}
\text{Find $x\in\reals^n$ such that $x=f(x)$,}
\EEQ 
where $f:\reals^n\rightarrow\reals^n$ is potentially non-smooth.
Unless otherwise stated, we assume throughout the paper that $f$ is
non-expansive (in the $\ell_2$-norm), \ie, $\|f(x)-f(y)\|_2\leq \|x-y\|_2$
for all $x,~y\in \reals^n$, and that the solution set
$X=\{x^\star\;|\;x^\star=f(x^\star)\}$ of (\ref{non_eq}) is nonempty.
With these assumptions, (\ref{non_eq}) can be solved by the
Krasnosel'ski\v{\i}-Mann (KM, or averaged) iteration algorithm, which
updates $x^k$ in iteration $k$ to $x^{k+1}=(1-\alpha) x^k+\alpha f(x^k)$,
where $\alpha\in(0,1)$ is an algotihm parameter.
An elementary proof shows the
global convergence of KM iteration to some fixed-point $x^\star\in X$
\cite{MonoPrimer}. 
In one sense, our goal is to accelerate the vanilla KM algorithm.

Fixed-point problems such as~(\ref{non_eq}) arise 
ubiquitously in mathematics, natural science and
social science. For example, to find a Nash equilibrium in a
multi-player game, one can reformulate it as a monotone inclusion
problem under mild assumptions on the utility functions \cite{NashMono},
which can then be further reformulated as a fixed-point problem of the
corresponding (non-expansive) resolvent or Cayley operator
\cite{MonoPrimer}. In general, the solution of most, if not all, convex
optimization problems falls into the above scenario. In fact, almost all
optimization algorithms are iterative, and the goal is to solve the
corresponding fixed-point problem (\ref{non_eq}), where
$f:\reals^n\rightarrow\reals^n$ is the iteration mapping. When the
optimization problem is convex, $f$ is typically non-expansive, and the
solution set of the fixed-point problem is the same as that of the original
optimization problem, or closed related to it.
Another related example is infinite-horizon
discounted Markov Decision Process \cite{MDP,BertDP}, in which the
optimal policy can be found by solving the fixed-point problem of the
associated Bellman operator, which is not non-expansive (in the
$\ell_2$-norm) but is contractive in the $\ell_{\infty}$-norm. 
Such kind of
scenarios are also discussed in \S \ref{contract_norm} as a variant of
our main setting.

In spite of the robustness of the vanilla KM iteration algorithm, the
convergence can be extremely slow in practice, especially when high or
even just moderate accuracy is needed.
Data pre-conditioning and step-size line-search are the two most commonly
used generic approaches to accelerate the convergence of the KM method
\cite{LSaveop}.
To further accelerate the
convergence, a trade-off between the number of iterations and
per-iteration cost is needed. 
In this case, when $f(x)=x-\alpha\nabla F(x)$ 
is the gradient descent mapping for the minimization of the differentiable 
objective function $F(x)$, Newton, quasi-Newton, and
accelerated gradient descent methods (\eg, Nesterov's \cite{Nesterov}) 
can then be used to reduce the overall iteration complexity at 
the cost of increased cost in each step \cite{LuenYe}. 
For more general $f$, Semi-smooth Newton
\cite{ZicoSemNewton, ZaiwenSemNewton} and B-differentiable (quasi-)Newton
\cite{BdiffNewton, BdiffQN}, which generalize their classical
counterparts, have also been proposed and widely studied. More recently,
some hybrid methods, which interleave vanilla KM iterations with
(quasi-)Newton type acceleration steps, are designed to enjoy smaller
per-iteration cost while maintaining fast convergence in practice
\cite{RNA, SuperMann}.

Nevertheless, to our knowledge, apart from (pure) pre-conditioning and
line-search (which can be superimposed on top of other acceleration
schemes), the (local or global) convergence of most, if not all existing
methods require additional assumptions, \eg, some kind of differentiability
around the solution \cite{MoreTran, MonoBroyden}, symmetry of the
Jacobian of $f$ \cite{LFSymm2, LFSymm1}, or symmetry of the approximate
Jacobians in the algorithm \cite{ZhouLiLBFGS, ZhouLiBFGS}. 
Moreover, line search is
(almost) always enforced in these methods to ensure global convergence,
which can be prohibitive when function evaluations are expensive. Our
main goal in this paper is hence to provide a globally convergent
acceleration method with relatively small per-iteration costs, without resorting to
line search or any further assumptions other than non-expansiveness,
thus guaranteeing improvement of a much larger class of algorithms ruled
out by existing methods.

To achieve this goal, we propose to solve (\ref{non_eq}) using the type-I
(or ``good'') Anderson acceleration (AA-I) \cite{FangSecant}, a natural
yet underdeveloped variant of the original Anderson acceleration (AA),
also known as the type-II Anderson acceleration (AA-II) \cite{AA}. Despite
its elegance in implementation, popularity in chemistry and physics, and
success in specific optimization problems, a systematic treatment of AA,
especially AA-I in optimization-related applications is still lacking. One of
the main purposes of this work is thus to showcase the impressive
numerical performance of AA-I on problems from these fields.

On the other hand, both early experiments in \cite{FangSecant} and our
preliminary benchmark tests of SCS 2.0 \cite{SCS-2} show that although
AA-I outperforms AA-II in many cases (matching its name of ``good"), it
also suffers more from instability.  Moreover, few convergence analysis
of AA and its variants (and none for AA-I) for general nonlinear problems
exist in the literature, and the existing ones all require $f$ to be
continuously differentiable (which excludes most algorithms
involving projections, 
and in general proximal operators), and are either local
\cite{ProjBroyden, ConvDIIS, ConvAA} or assume certain non-singularity
(\eg, contractivity) conditions \cite{RNA, SRNA, RNA-DNN}.  Another goal
of this paper is hence to provide modifications that lead to a
stabilized AA-I with convergencence beyond differentiability, locality
and non-singularity. As a result, we obtain global convergence to a
fixed-point with no additional assumptions apart from non-expansiveness.

We emphasize that our analysis does not provide a rate of convergence.
While it would be nice to formally establish that our
modified AA-I algorithm converges faster than vanilla KM, we do not
do this in this paper.  Instead, we show only that convergence occurs.
The benefit of our method is not an improved theoretical convergence rate;
it is instead (a) a formal proof that the method always converges,
under very relaxed conditions, and (b) empirical studies that 
show that terminal convergence, especially to moderately high 
accuracies, is almost always much better than vanilla methods.

\paragraph{Related work.} As its name suggests, AA is an acceleration
algorithm proposed by D. G. Anderson in 1965 \cite{AA}. The earliest
problem that AA dealt with was nonlinear integral equations. Later, 
 developed by another two different communities
\cite{Pulay1,Pulay2}, AA has enjoyed wide application in material
sciences and computational quantum chemistry for the computation of
electronic structures, where it is also known as Pulay/Anderson mixing
and (Pulay's) direct inversion iterative subspace (DIIS), respectively.
In contrast, its name is quite unpopular in the optimization community.
As far as we know, it was not until \cite{FangSecant} connected it with
Broyden's (quasi-Newton) methods, that some applications of AA to
optimization algorithms, including expectation-maximization (EM),
alternating nonnegative least-squares (ANNLS) and alternating
projections (AP), emerged \cite{WalkerNi,DampAA,Apcm}.  More recently,
applications are further extended to machine learning and control
problems, including K-means clustering \cite{Kmeans-AA}, robot
localization \cite{AA-ICP} and computer vision \cite{RNA-CNN}.

There has been a rich literature on applications of AA to specific
problems, especially within the field of computational chemistry and
physics \cite{Appl1,Appl2,Appl3,Appl4,Appl5,Appl6}. In the meantime, an
emerging literature on applications to optimization-related problems is
also witnessed in recent years, as mentioned above.

Nevertheless, theoretical analysis of AA and its variants is relatively
underdeveloped, and most of the theory literature is focused on the full
memory AA-II, \ie, $m_k=k$ for all $k\geq 0$ in Algorithm \ref{alg:AA} below. 
This deviates from the
original AA-II \cite{AA} and in general most of the numerical and
application literature, where limited memory AA is predominant. For
solving general fixed-point problems (or equivalently, nonlinear
equations), perhaps the most related work to ours is \cite{ProjBroyden}
and \cite{ConvDIIS}, among which the former proves local Q-superlinear
convergence of a full-memory version of AA-I, while the latter
establishes local Q-linear convergence for the original (limited memory)
AA-II, both presuming continuous differentiability of $f$ in
(\ref{non_eq}) around the solutions. A slightly more stabilized version
of full-memory AA-I is introduced in \cite{StableMultiSec} by
generalizing the re-starting strategy in \cite{ProjBroyden}, which is then
later globalized using a non-monotone line search method \cite{LF-LS},
assuming Lipschitz differentiability of $f$ \cite{Multisec-Interp}. In
practice, the generalized re-starting strategy is more computationally
expensive, yet the performance improvement is non-obvious
\cite{Multisec-Interp}. This motivates us to keep to the original
re-starting strategy in \cite{ProjBroyden} in \S \ref{restart}. By
assuming in addition contractivity of $f$, a slightly stronger and
cleaner local linear convergence of the original AA-II can also be
obtained \cite{ConvAA}. A similar analysis for noise-corrupted $f$ is
later conducted in \cite{ConvAA_rand}.

Interestingly, the three papers on full-memory AA-I, which to our
knowledge are the only papers analyzing the convergence of AA-I
(variants) for general nonlinear fixed-point problems, are not aware of
the literature stemming from \cite{AA}, and the algorithms there are
termed as projected Broyden's methods. We will discuss the connection between AA
and the Broyden's methods in \S \ref{AA1} following a similar treatment as
in \cite{FangSecant, WalkerNi, ConvDIIS}, which also paves the way for the analysis of
our modified (limited-memory) AA-I.

On the other hand, stronger results have been shown for more special
cases. When $f$ is restricted to affine mappings, finite-step
convergence of full-memory AA is discussed by showing its essential
equivalence to GMRES and Arnoldi method \cite{WalkerNi, ConvLinear}.
More recently, a regularized variant of full-memory AA-II is
rediscovered as regularized nonlinear acceleration (RNA) in \cite{RNA},
in which $f$ is the gradient descent mapping of a strongly convex and
strongly smooth real-valued function. Global linear convergence with
improved rates similar to Nesterov's accelerated gradient descent is
proved using a Chebyshev's acceleration argument. The results are then
extended to stochastic \cite{SRNA} and momentum-based \cite{RNA-DNN}
algorithms.

We are not aware of any previous work on convergence
of the limited memory AA-I or its variants, let alone global
convergence in the absence of (Fr\'echet continuous) differentiability
and non-singularity (or contractivity), which is missing from the entire
AA literature.

\paragraph{Outline.} In \S \ref{AA1}, we introduce the original AA-I
\cite{FangSecant}, and discuss its relation to quasi-Newton methods. In
\S \ref{mod-AA1}, we propose a stabilized AA-I with Powell-type
regularization, re-start checking and safe-guarding steps. A
self-contained convergence analysis of the stabilized AA-I is given in
\S \ref{global_conv}. Finally, we demonstrate the effectiveness of our
proposed algorithms with various numerical examples in \S \ref{exam_aa}.
Extensions and variants to our results are discussed in \S
\ref{ext_var}, followed by a few conclusive remarks in \S
\ref{conclusion}.

\subsection{Notation and definitions}
We list some basic definitions and notation to be used in the rest of
the paper.
We denote the set of real numbers as $\reals$; $\reals_+$ the set of
non-negative real numbers; $\bar{\reals}=\reals\cup\{+\infty\}$ is the
extended real line, and $\reals^n$ the $n$-dimensional Euclidean space
equipped with the inner product $x^Ty$ for $x,y\in\reals^n$ and 
the $\ell_2$-norm $\|\cdot\|_2$. For notational compactness, 
we will alternatively use 
\[ (x_1,\dots,x_n)\quad \mbox{and} \quad
\left[\begin{array}{c}
x_1\\
\vdots\\
x_n
\end{array}\right]
\] to denote a vector in $\reals^n$.

The \textit{proximal operator} of a convex, closed and proper function
$F:\reals^n\rightarrow \bar{\reals}$ is given by
\[
\text{prox}_F(x)=\argmin\nolimits_y\{F(y)+\tfrac{1}{2}\|y-x\|_2^2\}.
\]
For a nonempty, closed and convex set $\mathcal{C}\subseteq \reals^n$,
the indicator function of $\mathcal{C}$ is denoted as
\[
\mathcal{I}_{\mathcal{C}}(x)=\left\{
\begin{array}{ll}
0 & \text{if $x\in \mathcal{C}$}\\
+\infty & \text{otherwise.}
\end{array}
\right.
\] Similarly, we denote the \textit{projection} on $\mathcal{C}$ as
\[
\Pi_{\mathcal{C}}(x)=\argmin\nolimits_{y\in\mathcal{C}}\|x-y\|_2,
\] and the \textit{normal cone}  of $\mathcal{C}$ as
\[ N_{\mathcal{C}}(x)=\{y\in
\reals^n\;|\;\sup\nolimits_{x'\in\mathcal{C}} y^T(x'-x)\leq 0\}.
\] 
The projection $\Pi_{\mathcal{C}}$ is the proximal operator
of $\mathcal{I}_{\mathcal{C}}$.

A mapping $f:\reals^n\rightarrow\reals^n$ is said to be
\textit{non-expansive} if for all $x,~y\in\reals^n$,
\[
\|f(x)-f(y)\|_2\leq \|x-y\|_2.
\] It is said to be \textit{$\gamma$-contractive} in an (arbitrary) norm
$\|\cdot\|$ if for all $x,~y\in\reals^n$,
\[
\|f(x)-f(y)\|\leq \gamma\|x-y\|.
\]

A relation $G:\reals^n\rightarrow2^{\small \reals^n}$ is said to be
\textit{monotone}, if for all $x,~y\in\reals^n$,
\[ (u-v)^T(x-y)\geq 0 ~~\mbox{for all}~~ u\in G(x),~v\in G(y).
\] It is said to be \textit{maximal monotone} if there is no monotone
operator that properly contains it (as a relation, \ie, subset of
$\reals^n\times\reals^n$). We refer interested readers to
\cite{MonoPrimer} for a detailed explanation of relations.
When a relation $G$ is single-valued, it becomes a usual
mapping from $\reals^n$ to $\reals^n$, and the same definition of
(maximal) monotonicity holds.

For a matrix $A=(a_{ij})_{n\times n}\in \reals^{n\times n}$, its
$\ell_2$-norm (or spectral/operator norm) is denoted as
$\|A\|_2=\sup_{\|x\|_2=1}\|Ax\|_2$. The Frobenius norm of $A$ is denoted
as $\|A\|_F=\sqrt{\sum_{i,j=1}^na_{ij}^2}$. 
The spectral radius of a square matrix $A$
is the maximum absolute value eigenvalue, \ie,
\[
\rho(A)=\max\{|\lambda_1|,\dots,|\lambda_n|\},
\]
where $\lambda_1,\dots,\lambda_n$ are eigenvalues of $A$ (with repetitions counted).

For a description of strong convexity and strong smoothness of a
function $F:\reals^n\rightarrow\reals$, see \cite{MonoPrimer}. They will
only be used when it comes to the examples in \S \ref{exam_aa}.

\section{Type-I Anderson acceleration} \label{AA1}
In this section we introduce the original AA-I, with a focus on its
relation to quasi-Newton methods. Following the historical development
from \cite{AA} to \cite{FangSecant}, we naturally motivate it by
beginning with a brief introduction to the original AA-II, making
explicit its connection to the type-II Broyden's method, and then move on
to AA-I as a natural counterpart of the type-I Broyden's method.

\subsection{General framework of AA}\label{AA_frame}
As illustrated in the prototype Algorithm \ref{alg:AA}, the main idea is
to maintain a memory of previous steps, and update the iteration as a
linear combination of the memory with dynamic weights. It can be seen as
a generalization of the KM iteration algorithm, where the latter uses only the most
recent two steps, and the weights are pre-determined, which leads to
sub-linear convergence for non-expansive mappings in general
\cite{MonoPrimer}, and linear convergence under certain additional
assumptions \cite{AveOP}.
\begin{algorithm}[h]
   \caption{\textbf{Anderson Acceleration Prototype (AA)}}
   \label{alg:AA}
\begin{algorithmic}[1]
   \STATE {\bfseries Input:} initial point $x_0$, fixed-point mapping 
   $f:\reals^n\rightarrow\reals^n$.
   \FOR{$k=0, 1, \dots$}
   \STATE Choose $m_k$ (\eg, $m_k=\min\{m, k\}$ for some integer $m\geq 0$). 
   \STATE Select weights $\alpha_j^k$ based on the last $m_k$ iterations 
   satisfying $\sum_{j=0}^{m_k}\alpha_j^k=1$.
   \STATE $x^{k+1}=\sum_{j=0}^{m_k}\alpha_j^kf(x^{k-m_k+j})$.
   \ENDFOR
\end{algorithmic}
\end{algorithm}

The integer $m_k$ is the memory in iteration $k$, since the next
iterate is a linear combination of the images of the last $k$ iterates
under the map $f$.
Based on the choices of the weights $\alpha_j^k$ in line 4 of Algorithm
\ref{alg:AA}, AA is classified into two subclasses \cite{FangSecant},
namely AA-I and AA-II. The terminology indicates a close relationship
between AA and quasi-Newton methods, as we will elaborate in more
details below. While existing literature is mainly focused on AA-II, our
focus is on the less explored AA-I.

\subsection{The original AA: AA-II}\label{AA2}
Define the residual $g:\reals^n\rightarrow\reals^n$ of $f$ to be
$g(x)=x-f(x)$. In AA-II \cite{AA}, for each iteration $k\geq 0$, we
solve the following least squares problem with a normalization
constraint:
\begin{equation}\label{aa2sub}
\begin{array}{ll}
\mbox{minimize} & \|\sum_{j=0}^{m_k}\alpha_j g(x^{k-m_k+j})\|_2^2\\
\mbox{subject to} & \sum_{j=0}^{m_k}\alpha_j=1,
\end{array}
\end{equation}
with variable $\alpha=(\alpha_0,\dots,\alpha_{m_k})$. The weight
vector $\alpha^k=(\alpha_0^k,\dots,\alpha_{m_k}^k)$ in line 4 of
Algorithm \ref{alg:AA} is then chosen as the solution to (\ref{aa2sub}).
The intuition is to minimize the norm of the weighted residuals of the previous
$m_k+1$ iterates. In particular, when $g$ is affine, it is not difficult
to see that (\ref{aa2sub}) directly finds a normalized weight vector $\alpha$, 
minimizing the residual norm $\|g(x^{k+1/2})\|_2$ among all $x^{k+1/2}$ 
that can be represented as $x^{k+1/2}=\sum_{j=0}^{m_k}\alpha_jx^{k-m_k+j}$, 
from which $x^{k+1}=f(x^{k+1/2})$ is then computed with an additional 
fixed-point iteration in line 5 of Algorithm \ref{alg:AA}.

\paragraph{Connection to quasi-Newton methods.} To reveal the connection
between AA-II and quasi-Newton methods, we begin by noticing that the
inner minimization subproblem (\ref{aa2sub}) can be efficiently solved
as an unconstrained least squares problem by a simple variable
elimination \cite{WalkerNi}. More explicitly, we can reformulate
(\ref{aa2sub}) as follows:
\BEQ\label{inAA}
\begin{array}{ll}
\mbox{minimize} & \|g_k-Y_k\gamma\|_2,
\end{array}
\EEQ with variable $\gamma=(\gamma_0,\dots,\gamma_{m_k-1})$. Here
$g_i=g(x^i)$, $Y_k=[y_{k-m_k}~\dots~y_{k-1}]$ with $y_i=g_{i+1}-g_i$ for
each $i$, and $\alpha$ and $\gamma$ are related by $\alpha_0=\gamma_0$,
$\alpha_i=\gamma_i-\gamma_{i-1}$ for $1\leq i\leq m_k-1$ and
$\alpha_{m_k}=1-\gamma_{m_k-1}$.

Assuming for now that $Y_k$ is full column rank, the solution $\gamma^k$ to
(\ref{inAA}) is given by $\gamma^k=(Y_k^TY_k)^{-1}Y_k^Tg_k$, and hence
by the relation between $\alpha^k$ and $\gamma^k$, the next iterate of
AA-II can be represented as
\BEAS
\begin{split}
x^{k+1}&=f(x^k)-\sum_{i=0}^{m_k-1}\gamma_i^k\left(f(x^{k-m_k+i+1})-
f(x^{k-m_k+i})\right)\\
&=x^k-g_k-(S_k-Y_k)\gamma^k\\
&=x^k-(I+(S_k-Y_k)(Y_k^TY_k)^{-1}Y_k^T)g_k\\
&=x^k-H_kg_k,
\end{split}
\EEAS where $S_k=[s_{k-m_k}~\dots~s_{k-1}]$, $s_i=x^{i+1}-x^i$ for each
$i$, and $H_k=I+(S_k-Y_k)(Y_k^TY_k)^{-1}Y_k^T$. It has been observed
that $H_k$ minimizes $\|H_k-I\|_F$ subject to the inverse multi-secant
condition $H_kY_k=S_k$ \cite{FangSecant,WalkerNi}, and hence can be
regarded as an approximate inverse Jacobian of $g$. The update of $x^k$
can then be considered as a quasi-Newton-type update, with $H_k$ being
some sort of generalized second (or type-II) Broyden's update \cite{Broyden} 
of $I$ satisfying the inverse multi-secant condition.

It's worth noticing that a close variant of AA-II, with an additional
non-negative constraint $\alpha\geq 0$, is also widely used to
accelerate the SCF (self-consistent field) iteration in the electronic
structure computation. Such methods are typically referred to as
``energy DIIS'' in literature \cite{EDIIS}. However, the inner
minimization problem of energy DIIS has to be solved as a generic convex
quadratic program. Moreover, our preliminary experiments suggest that it
may not work as well for many optimization algorithms (\eg, SCS).

\subsection{AA-I}\label{aa1}
In the quasi-Newton literature, the type-II Broyden's update is often
termed as the ``bad Broyden's method''. In comparison, the so-called
``good Broyden's method", or type-I Broyden's method, which directly
approximates the Jacobian of $g$, typically seems to yield better
numerical performance \cite{GB-Broyden}.

In the same spirit, we define the type-I AA (AA-I) \cite{FangSecant}, in
which we find an approximate Jacobian of $g$ minimizing $\|B_k-I\|_F$
subject to the multi-secant condition $B_kS_k=Y_k$. Assuming for now
that $S_k$ is full column rank, we obtain (by symmetry) that
\BEQ\label{Bk} B_k=I+(Y_k-S_k)(S_k^TS_k)^{-1}S_k^T,
\EEQ and the update scheme is defined as
\BEQ\label{xupdate} x^{k+1}=x^k-B_k^{-1}g_k,
\EEQ assuming $B_k$ to be invertible. We will deal with the potential
rank deficiency of $S_k$ and singularity of $B_k$ shortly in the next
sections.

A direct application of Woodbury matrix identity shows that
\BEQ\label{Bkinv} B_k^{-1}=I+(S_k-Y_k)(S_k^TY_k)^{-1}S_k^T,
\EEQ where again we have assumed for now that $S_k^TY_k$ is invertible.
Notice that this explicit formula of $B_k^{-1}$ is preferred in that the
most costly step, inversion, is implemented only on a small $m_k\times
m_k$ matrix.

Backtracking the derivation in AA-II, (\ref{xupdate}) can be rewritten as
\BEQ\label{xupdate-aa}
x^{k+1}=x^k-g_k-(S_k-Y_k)\tilde{\gamma}^k=f(x^k)-\sum_{i=0}^{m_k-1}
\tilde{\gamma}_i^k\left(f(x^{k-m_k+i+1})-f(x^{k-m_k+i})\right),
\EEQ where $\tilde{\gamma}^k=(S_k^TY_k)^{-1}S_k^Tg_k$. Now we can see
how AA-I falls into the framework of Algorithm \ref{alg:AA}: here the
weight vector $\alpha^k$ in line $4$ is defined as
$\alpha_0^k=\tilde{\gamma}_0^k$,
$\alpha_i^k=\tilde{\gamma}_i^k-\tilde{\gamma}_{i-1}^k$ for $1\leq i\leq
m_k-1$ and $\alpha_{m_k}^k=1-\tilde{\gamma}_{m_k-1}^k$. Note that
although not as intuitive as the weight vector choice in AA-II, the
computational complexity is exactly the same whenever matrix-vector
multiplication is done prior to matrix-matrix multiplication.

For easier reference, we detail AA-I in the following Algorithm
\ref{alg:AA-I}. As our focus is on the more numerically efficient limited-memory 
versions, we also specify a maximum-memory parameter $m$ in the algorithm.
\begin{algorithm}[H]
   \caption{\textbf{Type-I Anderson Acceleration (AA-I-m)}}
   \label{alg:AA-I}
\begin{algorithmic}[1]
   \STATE {\bfseries Input:} initial point $x_0$, fixed-point 
   mapping $f:\reals^n\rightarrow\reals^n$, max-memory $m>0$.
   \FOR{$k=0, 1, \dots$}
   \STATE Choose $m_k\leq m$ (\eg, $m_k=\min\{m, k\}$ for some integer $m\geq 0$). 
   \STATE Compute $\tilde{\gamma}^k=(S_k^TY_k)^{-1}(S_k^Tg_k)$. 
   \STATE Compute $\alpha_0^k=\tilde{\gamma}_0^k$, 
   $\alpha_i^k=\tilde{\gamma}_i^k-\tilde{\gamma}_{i-1}^k$ for 
   $1\leq i\leq m_k-1$ and $\alpha_{m_k}^k=1-\tilde{\gamma}_{m_k-1}^k$.
   \STATE $x^{k+1}=\sum_{j=0}^{m_k}\alpha_j^kf(x^{k-m_k+j})$.
   \ENDFOR
\end{algorithmic}
\end{algorithm}

Note that in the above algorithm, the iteration may get stuck or suffer
from ill-conditioning if $B_k$, or equivalently either $S_k$ or $Y_k$ is
(approximately) rank-deficient. This is also a major source of numerical
instability in AA-I. We will solve this issue in the next section.

\section{Stabilized type-I Anderson acceleration}\label{mod-AA1}
In this section, we propose several modifications to the vanilla AA-I
(Algorithm \ref{alg:AA-I}) to stabilize its convergence. We begin by
introducing a Powell-type regularization to ensure the non-singularity
of $B_k$. We then introduce a simple re-start checking strategy that
ensures certain strong linear independence of the updates $s_k$. These
together solve the stagnation problem mentioned at the end of the last
section. Finally, we introduce safe-guarding steps that check the
decrease in the residual norm, with which the modifications altogether
lead to global convergence to a solution of (\ref{non_eq}), as we will
show in \S \ref{global_conv}.

\paragraph{Rank-one update.} To motivate the modifications,  we take a
step back to the update formula (\ref{xupdate}) and formalize a closer
connection between AA-I and the type-I Broyden's method in terms of
rank-one update. The counterpart result has been proved for AA-II in
\cite{ConvDIIS}.
\begin{proposition}\label{r1-update}
Suppose that $S_k$ is full rank, then $B_k$ in (\ref{Bk}) can be computed
 inductively from $B_k^0=I$ as follows: 
\BEQ\label{Bkind}
B_k^{i+1}=B_k^{i}+\dfrac{(y_{k-m_k+i}-B_k^{i}s_{k-m_k+i})\hat{s}_{k-m_k+i}^T}
{\hat{s}_{k-m_k+i}^Ts_{k-m_k+i}}, \quad i=0,\dots,m_k-1
\EEQ
with $B_k=B_k^{m_k}$. Here $\{\hat{s}_i\}_{i=k-m_k}^{k-1}$ is the Gram-Schmidt 
orthogonalization of $\{s_i\}_{i=k-m_k}^{k-1}$, \ie,
\BEQ\label{sorth}
\hat{s}_i=s_i-\sum_{j=k-m_k}^{i-1}\dfrac{\hat{s}_j^Ts_i}{\hat{s}_j^T\hat{s}_j}\hat{s}_j,
 \quad i=k-m_k,\dots,k-1.
\EEQ
\end{proposition}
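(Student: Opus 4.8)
The plan is to pin down $B_k$ by two structural properties that make its equality with the output of (\ref{Bkind}) transparent, and then to verify those properties by induction on $i$.

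First I would record an abstract characterization: when $S_k$ has full column rank, the matrix $B_k$ of (\ref{Bk}) is the \emph{unique} $B\in\reals^{n\times n}$ with $BS_k=Y_k$ and with every row of $B-I$ lying in $\mathrm{range}(S_k)=\mathrm{span}\{s_{k-m_k},\dots,s_{k-1}\}$. That (\ref{Bk}) has both properties is immediate, since $(Y_k-S_k)(S_k^TS_k)^{-1}S_k^TS_k=Y_k-S_k$ and since the rows of $(Y_k-S_k)(S_k^TS_k)^{-1}S_k^T$ are linear combinations of the rows of $S_k^T$, i.e.\ of the columns of $S_k$. Uniqueness is the key linear-algebra fact: if $B$ and $B'$ both satisfy these two properties then $D:=B-B'$ obeys $DS_k=0$, so every row of $D$ lies simultaneously in $\mathrm{range}(S_k)^\perp$ and in $\mathrm{range}(S_k)$, whence $D=0$. (This is also the reasoning behind the minimum-Frobenius-norm derivation of (\ref{Bk}) from the multi-secant condition $B_kS_k=Y_k$.)

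Next I would prove by induction on $i=0,\dots,m_k$ that the matrices $B_k^i$ generated from $B_k^0=I$ by (\ref{Bkind}) satisfy
\begin{equation}\label{eq:r1ih}
B_k^i\,s_{k-m_k+j}=y_{k-m_k+j}\quad\text{for } j=0,\dots,i-1,
\end{equation}
together with the property that every row of $B_k^i-I$ lies in $\mathrm{span}\{\hat s_{k-m_k},\dots,\hat s_{k-m_k+i-1}\}$ (an empty span, hence $\{0\}$, when $i=0$, so the base case is trivial). For the inductive step, write $\ell=k-m_k+i$; by (\ref{sorth}) and the full-rank hypothesis, $\hat s_\ell\neq 0$ and $\hat s_\ell^Ts_\ell=\hat s_\ell^T\hat s_\ell=\|\hat s_\ell\|_2^2>0$, so the update (\ref{Bkind}) is well defined. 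Right-multiplying (\ref{Bkind}) by $s_\ell$ reduces it to $B_k^{i+1}s_\ell=B_k^is_\ell+(y_\ell-B_k^is_\ell)=y_\ell$. Right-multiplying instead by $s_{k-m_k+j}$ with $j<i$ annihilates the correction term, because the defining orthogonality of Gram--Schmidt gives $\hat s_\ell\perp\mathrm{span}\{s_{k-m_k},\dots,s_{\ell-1}\}$ and so $\hat s_\ell^Ts_{k-m_k+j}=0$; hence $B_k^{i+1}s_{k-m_k+j}=B_k^is_{k-m_k+j}=y_{k-m_k+j}$ by the inductive hypothesis. Finally, every row of the rank-one correction in (\ref{Bkind}) is a scalar multiple of $\hat s_\ell$, so every row of $B_k^{i+1}-I$ lies in $\mathrm{span}\{\hat s_{k-m_k},\dots,\hat s_\ell\}=\mathrm{span}\{s_{k-m_k},\dots,s_\ell\}$, closing the induction.

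Taking $i=m_k$ in the above gives $B_k^{m_k}S_k=Y_k$ and every row of $B_k^{m_k}-I$ in $\mathrm{span}\{\hat s_{k-m_k},\dots,\hat s_{k-1}\}=\mathrm{range}(S_k)$, so the uniqueness from the first step forces $B_k^{m_k}=B_k$, as claimed. The one point needing genuine care — and the step I expect to be the main obstacle — is the choice of invariants in (\ref{eq:r1ih}): the earlier secant equations survive the update only because the rank-one directions are the \emph{orthogonalized} $\hat s_\ell$ rather than the raw $s_\ell$; with $s_\ell$ in their place the cross terms $\hat s_\ell^Ts_{k-m_k+j}$ would fail to vanish and the multi-secant property $B_kS_k=Y_k$ would be destroyed after the very first update. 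Everything else is routine bookkeeping.
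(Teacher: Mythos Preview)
Your proof is correct and follows essentially the same approach as the paper's. The paper phrases the second invariant as $B_k^{m_k}Z_k=Z_k$ for a basis $Z_k$ of $\mathrm{span}(S_k)^\perp$ (and checks it in one shot at the end rather than carrying it through the induction), which is exactly equivalent to your ``every row of $B_k^{m_k}-I$ lies in $\mathrm{range}(S_k)$''; the multi-secant induction and the uniqueness argument are the same.
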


We remark that another similar rank-one update formula for full-memory 
AA-I is presented in \cite{ProjBroyden}. However, the result there corresponds 
to successive minimization of $\|B_k^{i+1}-B_k^i\|_F$ with the multi-secant
constraints, instead of the direct minimization of $\|B_k-I\|_F$. It's
thus non-obvious how we can apply their result here, and we instead
provide a self-contained proof in the Appendix. The basic idea is to
prove by induction, and to fix $B_k$ by its restrictions to
$\text{span}(S_k)$ and its orthogonal complement, respectively.

\subsection{Powell-type regularization}\label{powell}

To fix the potential singularity of $B_k$, we introduce a Powell-type
regularization to the rank-one update formula (\ref{Bkind}). The idea is
to specify a parameter $\bar{\theta}\in(0,1)$, and simply replace
$y_{k-m_k+i}$ in (\ref{Bkind}) with
\BEQ\label{ytilde}
\tilde{y}_{k-m_k+i}=\theta_k^iy_{k-m_k+i}+(1-\theta_k^i)B_k^is_{k-m_k+i},
\EEQ where $\theta_k^i=\phi_{\bar{\theta}}(\eta_k^i)$ is defined with
\BEQ\label{thetak}
\phi_{\bar{\theta}}(\eta)=
\left\{
\begin{array}{ll}
1 & \text{if $|\eta|\geq \bar{\theta}$}\\
\frac{1-\textbf{sign}(\eta)\bar{\theta}}{1-\eta} 
&\text{if $|\eta|<\bar{\theta}$}
\end{array}
\right.
\EEQ and
$\eta_k^i=\frac{\hat{s}_{k-m_k+i}^T(B_k^i)^{-1}y_{k-m_k+i}}
{\|\hat{s}_{k-m_k+i}\|_2^2}$.
Here we adopt the convention that $\textbf{sign}(0)=1$. The formulation
is almost the same as the original Powell's trick used in \cite{Powell},
but we redefine $\eta_k$ to take the orthogonalization into
considerations. Similar ideas have also been introduced in \cite{RNA}
and \cite{DampAA} by adding a Levenberg-Marquardt-type regularization.
However, such tricks are designed for stabilizing least-squares problems
in AA-II, which are not applicable here.

We remark that the update remains unmodified when $\bar{\theta}=0$. On
the other hand, when $\bar{\theta}=1$, (\ref{xupdate}) reduces to the
vanilla fixed-point iteration associated with (\ref{non_eq}). Hence
$\bar{\theta}$ serves as a bridge between the two extremes. By
definition, we immediately see that
$\theta_k^i\in[1-\bar{\theta},1+\bar{\theta}]$, which turns out to be a
useful bound in the subsequent derivations.

The following lemma establishes the non-singularity of the modified
$B_k$, which also indicates how $\bar{\theta}$ trades off between
stability and efficiency.

\begin{lemma}\label{invertible}
Suppose $\{s_i\}_{i=k-m_k}^{k-1}$ to be an arbitrary sequence in
 $\reals^n$. Define $B_k=B_k^{m_k}$ inductively from $B_k^0=I$ as
\BEQ\label{mod_Bkind}
\begin{split}
B_k^{i+1}=B_k^{i}+\dfrac{(\tilde{y}_{k-m_k+i}-B_k^{i}s_{k-m_k+i})
\hat{s}_{k-m_k+i}^T}{\hat{s}_{k-m_k+i}^Ts_{k-m_k+i}}, \quad i=0,\dots,m_k-1,
\end{split}
\EEQ
with $\hat{s}_{k-m_k+i}$ and $\tilde{y}_{k-m_k+i}$ defined as in 
(\ref{sorth}) and (\ref{ytilde}), respectively. Suppose that the updates 
above are all well-defined. Then $|\text{det}(B_k)|\geq \bar{\theta}^{m_k}>0$, 
and in particular, $B_k$ is invertible.
\end{lemma}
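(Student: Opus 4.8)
The plan is to track the determinant of the iterates $B_k^i$ through the rank-one updates (\ref{mod_Bkind}) and show that at each step it gets multiplied by a factor of absolute value at least $\bar\theta$. I would argue by induction on $i$, with inductive hypothesis that $B_k^i$ is invertible; this holds for $i=0$ since $B_k^0=I$, and well-definedness of the updates is assumed. Writing the $i$-th update as $B_k^{i+1}=B_k^i+u_i\hat s_{k-m_k+i}^T$ with $u_i=(\tilde y_{k-m_k+i}-B_k^i s_{k-m_k+i})/(\hat s_{k-m_k+i}^T s_{k-m_k+i})$, the matrix determinant lemma $\det(A+uv^T)=\det(A)\,(1+v^TA^{-1}u)$ together with $(B_k^i)^{-1}B_k^i s_{k-m_k+i}=s_{k-m_k+i}$ gives
\[
\frac{\det(B_k^{i+1})}{\det(B_k^i)}
= 1+\frac{\hat s_{k-m_k+i}^T(B_k^i)^{-1}\tilde y_{k-m_k+i}-\hat s_{k-m_k+i}^T s_{k-m_k+i}}{\hat s_{k-m_k+i}^T s_{k-m_k+i}}
= \frac{\hat s_{k-m_k+i}^T(B_k^i)^{-1}\tilde y_{k-m_k+i}}{\hat s_{k-m_k+i}^T s_{k-m_k+i}}.
\]

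Next I would simplify this ratio. The key identity is that the Gram--Schmidt relation (\ref{sorth}) yields $\hat s_{k-m_k+i}^T s_{k-m_k+i}=\|\hat s_{k-m_k+i}\|_2^2$, because $s_{k-m_k+i}-\hat s_{k-m_k+i}$ lies in the span of the earlier vectors $\hat s_j$, all of which are orthogonal to $\hat s_{k-m_k+i}$. Substituting the definition (\ref{ytilde}) of $\tilde y_{k-m_k+i}$, using again $(B_k^i)^{-1}B_k^i s_{k-m_k+i}=s_{k-m_k+i}$, and then the definition of $\eta_k^i$,
\[
\hat s_{k-m_k+i}^T(B_k^i)^{-1}\tilde y_{k-m_k+i}
= \theta_k^i\,\hat s_{k-m_k+i}^T(B_k^i)^{-1}y_{k-m_k+i}+(1-\theta_k^i)\|\hat s_{k-m_k+i}\|_2^2
= \|\hat s_{k-m_k+i}\|_2^2\bigl(1-\theta_k^i(1-\eta_k^i)\bigr),
\]
so the determinant ratio is exactly $1-\theta_k^i(1-\eta_k^i)$. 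It then remains to bound $|1-\theta_k^i(1-\eta_k^i)|$ below by $\bar\theta$ using the two branches of $\phi_{\bar\theta}$ in (\ref{thetak}): if $|\eta_k^i|\ge\bar\theta$ then $\theta_k^i=1$ and $1-\theta_k^i(1-\eta_k^i)=\eta_k^i$, of modulus $\ge\bar\theta$; if $|\eta_k^i|<\bar\theta$ then $\eta_k^i\ne1$, so $\theta_k^i(1-\eta_k^i)=1-\textbf{sign}(\eta_k^i)\bar\theta$ and hence $1-\theta_k^i(1-\eta_k^i)=\textbf{sign}(\eta_k^i)\bar\theta$, of modulus exactly $\bar\theta$ (here the convention $\textbf{sign}(0)=1$ covers $\eta_k^i=0$). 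In particular $\det(B_k^{i+1})\ne0$, which completes the induction step.

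Chaining the bounds over $i=0,\dots,m_k-1$ and using $\det(B_k^0)=1$ then gives
\[
|\det(B_k)| = \prod_{i=0}^{m_k-1}\bigl|1-\theta_k^i(1-\eta_k^i)\bigr| \ge \bar\theta^{m_k} > 0,
\]
so $B_k=B_k^{m_k}$ is invertible, as claimed. I do not expect a genuine obstacle here; the points requiring care are (i) carrying the invertibility of $B_k^i$ along the induction so that $(B_k^i)^{-1}$ and the matrix determinant lemma are legitimate at every step, and (ii) the case split on $\phi_{\bar\theta}$, including the boundary value $\eta_k^i=0$. The observation that does the real work is the Gram--Schmidt identity $\hat s^T s=\|\hat s\|_2^2$, which is precisely what collapses the determinant ratio to the clean factor $1-\theta_k^i(1-\eta_k^i)$.
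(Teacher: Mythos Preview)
Your proposal is correct and follows essentially the same route as the paper: an induction on $i$ that uses the matrix determinant lemma (the paper phrases it as Sylvester's determinant identity after factoring $B_k^i$) to reduce the determinant ratio to $1-\theta_k^i(1-\eta_k^i)$, and then the two-branch case split on $\phi_{\bar\theta}$ to bound this by $\bar\theta$. Your explicit mention of the Gram--Schmidt identity $\hat s_{k-m_k+i}^T s_{k-m_k+i}=\|\hat s_{k-m_k+i}\|_2^2$ and the handling of the $\eta_k^i=0$ boundary are a bit more spelled out than the paper, but the argument is the same.
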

\begin{proof}
We prove by induction that $|\text{det}(B_k^i)|\geq \bar{\theta}^i$. 
The base case when $i=0$ is trivial. Now suppose that we have proved the 
claim for $B_k^i$. By Sylvester's determinant identity, we have
\BEAS
\begin{split}
|\text{det}(B_k^{i+1})|&=|\text{det}(B_k^i)|\left|\text{det}
\left(I+\theta_k^i\dfrac{((B_k^i)^{-1}{y}_{k-m_k+i}-s_{k-m_k+i})
\hat{s}_{k-m_k+i}^T}{\hat{s}_{k-m_k+i}^Ts_{k-m_k+i}}\right)\right|\\
&=|\text{det}(B_k^i)|\left|1+\theta_k^i\dfrac{\hat{s}_{k-m_k+i}^T
((B_k^i)^{-1}{y}_{k-m_k+i}-s_{k-m_k+i})}{\hat{s}_{k-m_k+i}^Ts_{k-m_k+i}}\right|\\
&=|\text{det}(B_k^i)|\left|1-\theta_k^i(1-\eta_k^i)\right|\geq \bar{\theta}^i\cdot
\left\{\begin{array}{ll}
|\eta_k^i|, & \text{$|\eta_k^i|\geq \bar{\theta}$}\\
|\text{sgn}(\eta_k^i)\bar{\theta}|, & \text{$|\eta_k^i|<\bar{\theta}$}
\end{array}\right.\geq \bar{\theta}^{i+1}.
\end{split}
\EEAS
By induction, this completes our proof.
\end{proof}

Now that we have established the non-singularity of the modified $B_k$,
defining $H_k=B_k^{-1}$, we can directly update $H_k=H_k^{m_k}$ from
$H_k^0=I$ as follows:
\BEQ\label{Hkind}
H_k^{i+1}=H_k^i+\dfrac{(s_{k-m_k+i}-H_k^i\tilde{y}_{k-m_k+i})
\hat{s}_{k-m_k+i}^TH_k^i}{\hat{s}_{k-m_k+i}^TH_k^i\tilde{y}_{k-m_k+i}},
\quad i=0,\dots,m_k-1,
\EEQ again with $\hat{s}_{k-m_k+i}$ and $\tilde{y}_{k-m_k+i}$ defined as
in (\ref{sorth}) and (\ref{ytilde}), respectively. This can be easily
seen by a direct application of the Sherman-Morrison formula. Notice
that the $H_k$ hereafter is different from the one in \S \ref{AA2} for
AA-II.

It's worth pointing out that \cite{Multisec-Interp} also considers
selecting an appropriate $\theta_k^i$ in (\ref{ytilde}) to ensure the
non-singularity of $B_k$, but an explicit choice of $\theta_k^i$ is not
provided to guarantee its existence. Moreover, apart from the
well-defined-ness of the iterations, the modification is neither needed
in the proof, nor in the smooth numerical examples there as claimed by
the authors. In contrast, in our general non-smooth settings the
modification is both significant in theory and practice, as we will see
below.

\subsection{Re-start checking}\label{restart}
In this section, we introduce a re-start checking strategy proposed in
\cite{ProjBroyden}, and use it to establish uniform bounds on the
approximate (inverse) Jacobians, which turns out to be essential to the
final global convergence, as we will see in \S \ref{global_conv}.

Notice that the update formula (\ref{mod_Bkind}) is well-defined as long
as $\hat{s}_{k-m_k+i}\neq 0$, in which case the denominator
$\hat{s}_{k-m_k+i}^Ts_{k-m_k+i}=\|\hat{s}_{k-m_k+i}\|_2^2>0$. However,
unless $g_{k-m_k+i}=0$ for some $i=0,\dots,m_k-1$, in which case the
problem is already solved, we will always have
\[ s_{k-m_k+i}=-B_{k-m_k+i}^{-1}g_{k-m_k+i}\neq 0,
\]  where we used Lemma \ref{invertible} to deduce that $B_{k-m_k+i}$ is
invertible.

This means that the only case when the updates in (\ref{mod_Bkind})
break down is $s_{k-m_k+i}\neq0$ while $\hat{s}_{k-m_k+i}= 0$.
Unfortunately, such a scenario is indeed possible if $m_k$ is chosen as
$\min\{m,k\}$ for some fixed $1\leq m\leq \infty$ (with $m=\infty$
usually called ``full"-memory), a fixed-memory strategy most commonly
used in the literature. In particular, when $m$ is greater than the
problem dimension $n$, we will always have $\hat{s}_{k}=0$ for $k>n$ due
to linear dependence.

To address this issue, we enforce a re-start checking step that clears
the memory immediately before the algorithm is close to stagnation. More
explicitly, we keep $m_k$ growing, until either $m_k=m+1$ for some
integer $1\leq m<\infty$ or $\|\hat{s}_{k-1}\|_2<\tau\|s_{k-1}\|_2$, in
which case $m_k$ is reset to $0$ (\ie, no orthogonalization). The
process is then repeated.  Formally, the following rule is adopted to
select $m_k$ in each iteration $k\geq 0$, initialized from $m_0=0$:
\BEQ\label{mk_rule}
\begin{split}
&\text{Update $m_k=m_{k-1}+1$. If $m_k=m+1$ or $\|\hat{s}_{k-1}\|_2
<\tau\|s_{k-1}\|_2$, then reset $m_k=0$.}
\end{split}
\EEQ

Here $\tau\in(0,1)$ is pre-specified. The main idea is to make sure that
$\hat{s}_k\neq 0$ whenever $s_k$ is so, which ensures that the modified
updates (\ref{mod_Bkind}) and (\ref{Hkind}) won't break down before
reaching a solution. We  actually require a bit more by imposing a
positive parameter $\tau$, which characterizes a strong linear
independence between $s_k$ and the previous updates. This leads to
boundedness of $B_k$, as described in the following lemma.

\begin{lemma}\label{Bkbound}
Assume the same conditions as in Lemma \ref{invertible}, and in addition 
that $m_k$ is chosen by rule (\ref{mk_rule}). Then we have $\|B_k\|\leq 
3(1+\bar{\theta}+\tau)^m/\tau^m-2$ for all $k\geq 0$. 
\end{lemma}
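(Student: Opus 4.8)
The plan is to induct on $i$ in the rank-one recursion (\ref{mod_Bkind}), tracking the $\ell_2$ operator norm $\|B_k^i\|$ (so that $\|I\|=1$) and collapsing everything to a scalar recurrence. Two simplifications come for free. From (\ref{ytilde}) one has $\tilde y_{k-m_k+i}-B_k^i s_{k-m_k+i}=\theta_k^i\,(y_{k-m_k+i}-B_k^i s_{k-m_k+i})$, and from the Gram--Schmidt identity (\ref{sorth}), $\hat s_{k-m_k+i}^T s_{k-m_k+i}=\|\hat s_{k-m_k+i}\|_2^2$. Hence (\ref{mod_Bkind}) is the rank-one perturbation $B_k^{i+1}=B_k^i+\theta_k^i\,(y_{k-m_k+i}-B_k^i s_{k-m_k+i})\hat s_{k-m_k+i}^T/\|\hat s_{k-m_k+i}\|_2^2$, so that using $\|uv^T\|=\|u\|_2\|v\|_2$ and the triangle inequality,
\[
\|B_k^{i+1}\|\le \|B_k^i\|+|\theta_k^i|\,\frac{\|y_{k-m_k+i}\|_2+\|B_k^i\|\,\|s_{k-m_k+i}\|_2}{\|\hat s_{k-m_k+i}\|_2}.
\]

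To control the right-hand side I would use three bounds. First, $|\theta_k^i|\le 1+\bar\theta$, from the inclusion $\theta_k^i\in[1-\bar\theta,1+\bar\theta]$ noted after (\ref{thetak}). Second, $\|y_i\|_2\le 2\|s_i\|_2$: since $y_i=g_{i+1}-g_i=s_i-(f(x^{i+1})-f(x^i))$ and $\|f(x^{i+1})-f(x^i)\|_2\le\|s_i\|_2$ by non-expansiveness of $f$ --- this is the one place non-expansiveness is actually invoked. Third, and this is the crux, the re-start rule (\ref{mk_rule}) forces
\[
\|\hat s_{k-m_k+i}\|_2\ \ge\ \tau\,\|s_{k-m_k+i}\|_2\qquad\text{for }i=0,\dots,m_k-1.
\]

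For this last fact: if $m_k=0$ then $B_k=I$ and the claimed bound holds trivially (since $\tau<1+\bar\theta+\tau$ and $m\ge 1$), so assume $m_k=\ell\ge 1$. By (\ref{mk_rule}), $m$ has been incremented without a reset in each of the iterations $k-\ell+1,\dots,k$, so $m_{k-\ell}=0$ and the memory window $\{k-\ell,\dots,k-1\}$ was built up with a fixed left endpoint. Consequently the vector $\hat s_{k-\ell+i}$ produced by (\ref{sorth}) at iteration $k$ --- the component of $s_{k-\ell+i}$ orthogonal to $\text{span}\{s_{k-\ell},\dots,s_{k-\ell+i-1}\}$ --- is precisely the quantity screened by the test in (\ref{mk_rule}) at iteration $k-\ell+i+1$ (or at iteration $k$ itself when $i=\ell-1$); since no reset occurred there, the test failed, giving $\|\hat s_{k-\ell+i}\|_2\ge\tau\|s_{k-\ell+i}\|_2$ (the case $i=0$ being trivial as $\hat s_{k-\ell}=s_{k-\ell}$). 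The main obstacle is exactly this bookkeeping: one must verify that the Gram--Schmidt vector relative to the window at iteration $k$ coincides with the one screened earlier, which is where the ``no reset, fixed left endpoint'' structure of (\ref{mk_rule}) is essential.

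Putting the three bounds into the displayed inequality and writing $a:=(1+\bar\theta)/\tau$, $b_i:=\|B_k^i\|$, we obtain $b_{i+1}\le(1+a)b_i+2a$, equivalently $b_{i+1}+2\le(1+a)(b_i+2)$. Since $b_0=1$, iterating yields $b_{m_k}+2\le 3(1+a)^{m_k}\le 3(1+a)^m$, and substituting $1+a=(1+\bar\theta+\tau)/\tau$ gives $\|B_k\|=b_{m_k}\le 3(1+\bar\theta+\tau)^m/\tau^m-2$, uniformly in $k$, as claimed.
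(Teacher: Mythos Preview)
Your proof is correct and follows essentially the same route as the paper's: both extract the scalar recursion $\|B_k^{i+1}\|\le\dfrac{1+\bar\theta+\tau}{\tau}\,\|B_k^i\|+\dfrac{2(1+\bar\theta)}{\tau}$ from the rank-one update, using $|\theta_k^i|\le 1+\bar\theta$, the non-expansiveness bound $\|y_j\|_2\le 2\|s_j\|_2$, and $\|\hat s_j\|_2\ge\tau\|s_j\|_2$, and then telescope from $\|B_k^0\|=1$. Your extra paragraph verifying that the Gram--Schmidt vector tested in rule (\ref{mk_rule}) at iteration $k-\ell+i+1$ coincides with the one appearing in (\ref{sorth}) for the window at iteration $k$ (because the left endpoint is fixed between resets) fills in a bookkeeping detail the paper leaves implicit.
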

\begin{proof}
Notice that by rule (\ref{mk_rule}), we have $\|\hat{s}_k\|_2\geq 
\tau\|s_k\|_2$ and $m_k\leq m$ for all $k\geq 0$. Hence by (\ref{mod_Bkind}), 
we have that
\[
\begin{split}
\|B_k^{i+1}\|_2&\leq \|B_k^i\|_2+\theta_k^i\dfrac{\|y_{k-m_k+i}
-B_k^is_{k-m_k+i}\|_2}{\|\hat{s}_{k-m_k+i}\|_2}\\
&\leq \|B_k^i\|_2+\dfrac{1+\bar{\theta}}{\tau}\dfrac{\|y_{k-m_k+i}
-B_k^is_{k-m_k+i}\|_2}{\|s_{k-m_k+i}\|_2}.
\end{split}
\]
Noticing that $y_{k-m_k+i}=g(x^{k-m_k+i+1})-g(x^{k-m_k+i})$ and that 
$f(x)(=x-g(x))$ is non-expansive, we see that
\[
\|B_k^{i+1}\|_2\leq \dfrac{1+\bar{\theta}+\tau}{\tau}\|B_k^i\|_2+
\dfrac{2(1+\bar{\theta})}{\tau},
\]
and hence by telescoping the above inequality and the fact that 
$\|B_k^0\|_2=1$, we conclude that
\[
\|B_k\|_2=\|B_k^{m_k}\|_2\leq 3\left(\dfrac{1+\bar{\theta}
+\tau}{\tau}\right)^m-2.
\]
This completes our proof.
\end{proof}

In sum, combining the modified updates with the re-starting choice of
$m_k$, the rank-deficiency problem mentioned at the end of \S \ref{aa1}
is completely resolved. In particular, the full-rank assumption on $S_k$
is no longer necessary. Moreover, the inverse $H_k=B_k^{-1}$ is also
bounded, as described in the following corollary.
\begin{corollary}\label{Hkbound}
Under the same assumptions in Lemma \ref{Bkbound}, we have for all 
$k\geq0$ that 
\BEQ\label{Hkbdineq}
\|H_k\|_2\leq \left(3\left(\dfrac{1+\bar{\theta}+\tau}{\tau}\right)^m
-2\right)^{n-1}/\bar{\theta}^m.
\EEQ
\end{corollary}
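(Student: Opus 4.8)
The plan is to bound $\|H_k\|_2 = \|B_k^{-1}\|_2$ using the two facts already in hand: the determinant lower bound $|\det(B_k)| \geq \bar\theta^{m_k} \geq \bar\theta^m$ from Lemma \ref{invertible}, and the operator-norm upper bound $\|B_k\|_2 \leq 3((1+\bar\theta+\tau)/\tau)^m - 2 =: C$ from Lemma \ref{Bkbound}. The key elementary linear-algebra fact is that for an invertible $n\times n$ matrix $A$, one has $\|A^{-1}\|_2 = 1/\sigma_{\min}(A)$, where $\sigma_{\min}(A)$ is the smallest singular value, and
\[
\sigma_{\min}(A) = \frac{\prod_{i=1}^n \sigma_i(A)}{\prod_{i=1}^{n-1}\sigma_i(A)} \geq \frac{|\det(A)|}{\|A\|_2^{\,n-1}},
\]
since $\prod_i \sigma_i(A) = |\det(A)|$ and each of the $n-1$ largest singular values is at most $\|A\|_2 = \sigma_{\max}(A)$. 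Applying this with $A = B_k$ gives directly
\[
\|H_k\|_2 = \frac{1}{\sigma_{\min}(B_k)} \leq \frac{\|B_k\|_2^{\,n-1}}{|\det(B_k)|} \leq \frac{C^{\,n-1}}{\bar\theta^{m}},
\]
which is exactly \eqref{Hkbdineq}.

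First I would invoke Lemma \ref{invertible} to record $|\det(B_k)| \geq \bar\theta^m$ (using $m_k \leq m$, which holds under rule \eqref{mk_rule}, and $\bar\theta < 1$). Next I would invoke Lemma \ref{Bkbound} for the bound $\|B_k\|_2 \leq C$. Then I would state and apply the singular-value inequality above; a one-line justification via the SVD $B_k = U\Sigma V^T$ suffices, ordering the singular values $\sigma_1 \geq \cdots \geq \sigma_n > 0$ and writing $\sigma_n = |\det(B_k)|/(\sigma_1\cdots\sigma_{n-1}) \geq |\det(B_k)|/\|B_k\|_2^{n-1}$. Combining the three displays yields the claim.

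I do not anticipate a genuine obstacle here — this is a routine consequence of the two preceding results once one notices the determinant-versus-operator-norm trick for lower-bounding the smallest singular value. The only point requiring a little care is making sure the hypotheses of Lemmas \ref{invertible} and \ref{Bkbound} are in force (an arbitrary sequence $\{s_i\}$, well-defined updates, and $m_k$ chosen by \eqref{mk_rule}), all of which are inherited verbatim from the statement "under the same assumptions in Lemma \ref{Bkbound}." An alternative, essentially equivalent, route is to use the adjugate formula $B_k^{-1} = \mathrm{adj}(B_k)/\det(B_k)$ and bound each entry of $\mathrm{adj}(B_k)$ by a product of $n-1$ entries of $B_k$ (hence by $\|B_k\|_2^{n-1}$ up to dimensional constants), but the singular-value argument is cleaner and gives precisely the stated exponent $n-1$ with no extra constants.
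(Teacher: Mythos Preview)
Your proposal is correct and follows essentially the same approach as the paper: the paper also orders the singular values $\sigma_1\geq\cdots\geq\sigma_n$ of $B_k$, uses $\prod_i\sigma_i=|\det(B_k)|\geq\bar\theta^m$ from Lemma~\ref{invertible} together with $\sigma_1\leq C$ from Lemma~\ref{Bkbound}, and concludes $\|H_k\|_2=1/\sigma_n\leq \prod_{i=1}^{n-1}\sigma_i/\bar\theta^m\leq C^{n-1}/\bar\theta^m$.
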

\begin{proof}
Denote the singular values of $B_k$ as $\sigma_1\geq \cdots\geq\sigma_n$. 
Then by Lemma \ref{invertible}, we have $\prod_{i=1}^n\sigma_i
\geq\bar{\theta}^{m_k}\geq \bar{\theta}^m$. On the other hand, 
by Lemma \ref{Bkbound}, we have $\sigma_1\leq 3(1+\bar{\theta}+\tau)^m/\tau^m-2$. 
Hence we obtain that
\[
\|H_k\|_2=1/\sigma_n\leq \prod_{i=1}^{n-1}\sigma_i/\bar{\theta}^m\leq 
\left(3\left(\dfrac{1+\bar{\theta}+\tau}{\tau}\right)^m-2\right)^{n-1}/\bar{\theta}^m,
\]
which finishes our proof.
\end{proof}

We remark that for type-II methods as in \cite{ConvDIIS}, the algorithm
can already get stuck if $g(x^{k+1})=g(x^k)$, which is not informative
enough for us to say anything. That's also one of the reasons for
favoring the type-I AA in this paper. It's also worth mentioning that
empirical results in \cite{Restart-Pulay} and \cite{DampAA} have already
suggested that cleaning memories from time to time improves performance
significantly for self-consistent field (SCF) methods and EM-type
algorithms, partially supporting our modification here.

Notice that when $m_k$ is chosen by rule (\ref{mk_rule}) and $B_k$ is
computed as in Lemma \ref{invertible}, we have $B_k^i=B_{k-m_k+i}$. This
means that in iteration $k$, only a rank-one update (\ref{mod_Bkind})
with $i=m_k-1$ is needed, which yields $B_k=B_k^{m_k}$ from
$B_{k-1}=B_k^{m_k-1}$. Moreover, we can remove the necessity of
maintaining updates for $B_k^i$ used in Powell's regularization by
noticing that
$B_k^is_{k-m_k+i}=B_{k-m_k+i}s_{k-m_k+i}=-B_{k-m_k+i}B_{k-m_k+i}^{-1}
g_{k-m_k+i}=-g_{k-m_k+i}$.

\subsection{Safe-guarding steps}\label{safeguard}

We are now ready to introduce the final piece for our modified AA-I
algorithm. The main idea is to interleave AA-I steps with the vanilla KM
iteration steps to safe-guard the decrease in residual norms $g$. In
particular, we check if the current residual norm is sufficiently small,
and replace it with the $\alpha$-averaged (or KM) operator of $f$ in
(\ref{non_eq}) (defined as $f_{\alpha}(x)=(1-\alpha) x+\alpha f(x)$)
whenever not.

The idea of interleaving AA with vanilla iterations has also been
considered in \cite{Periodic-Pulay} with constant periods, and is
observed to improve both accuracy and speed for a certain class of
algorithms (\eg, SCF), despite that no theoretical guarantees for
convergence is provided. Similar ideas have been applied to regularized
AA \cite{RNA} and the classical Broyden's methods \cite{SuperMann} to seek
for smaller per-iteration costs without sacrificing much the
acceleration effects.

The resulting algorithm, combining all the aforementioned tricks, is
summarized as Algorithm \ref{alg:AA-I-safe}. Here, lines 4-8 perform
re-start checking (rule (\ref{mk_rule})) described in \S \ref{restart},
lines 9-11 perform the Powell-type regularization (update
(\ref{mod_Bkind})) described in \S \ref{powell}, and lines 12-14 execute
the safe-guarding strategy described above. As mentioned at the end of
\S \ref{restart}, only a rank-one update of (\ref{mod_Bkind}) from
$i=m_k-1$ is performed in iteration $k$, in which case the subscript
$k-m_k+i$ becomes $k-1$.

Notice that in line 5 of Algorithm \ref{alg:AA-I-safe}, instead of 
defining $s_{k-1}=x^k-x^{k-1}$ and $y_{k-1}=g(x^k)-g(x^{k-1})$ as in 
\S \ref{aa1}, we redefine it using the AA-I trial update $\tilde{x}^k$ 
to ensure that $B_{k-1}s_{k-1}=-B_{k-1}B_{k-1}^{-1}g_{k-1}=-g_{k-1}$ 
still holds as mentioned at the end of \S \ref{restart}, which makes 
it possible to get rid of maintaining an update for $B_k$. 
\begin{algorithm}[h]\label{algorithm3}
   \caption{\textbf{Stablized Type-I Anderson Acceleration (AA-I-S-m)}}
   \label{alg:AA-I-safe}
\begin{algorithmic}[1]
   \STATE {\bfseries Input:} initial point $x_0$, fixed-point mapping 
   $f:\reals^n\rightarrow\reals^n$, regularization constants 
   $\bar{\theta},~\tau,~\alpha\in(0,1)$, safe-guarding constants 
   $D,~\epsilon>0$, max-memory $m>0$.
   \STATE Initialize $H_0=I,~m_0=n_{AA}=0,~\bar{U}=\|g_0\|_2$, 
   and compute $x^1=\tilde{x}^1=f_{\alpha}(x^0)$.
   \FOR{$k=1, ~2, ~\dots$}
       \STATE $m_k=m_{k-1}+1$.
       \STATE Compute $s_{k-1}=\tilde{x}^{k}-x^{k-1}$, $y_{k-1}
       =g(\tilde{x}^{k})-g(x^{k-1})$.
    \STATE Compute $\hat{s}_{k-1}=s_{k-1}-\sum_{j=k-m_k}^{k-2}
    \frac{\hat{s}_j^Ts_{k-1}}{\hat{s}_j^T\hat{s}_j}\hat{s}_j$.
    \STATE {\bf If} $m_k=m+1$  or $\|\hat{s}_{k-1}\|_2<\tau\|s_{k-1}\|_2$
      \STATE \hspace{1cm} reset $m_k=0$, $\hat{s}_{k-1}=s_{k-1}$, and $H_{k-1}=I$.
   \STATE Compute $\tilde{y}_{k-1}=\theta_{k-1}y_{k-1}-(1-\theta_{k-1})g_{k-1}$
   \STATE \hspace{1.7cm} with $\theta_{k-1}=\phi_{\bar{\theta}}(\gamma_{k-1})$ 
   and $\gamma_{k-1}=\hat{s}_{k-1}^TH_{k-1}y_{k-1}/\|\hat{s}_{k-1}\|^2$.
   \STATE Update $H_k=H_{k-1}+\frac{(s_{k-1}-H_{k-1}\tilde{y}_{k-1})
   \hat{s}_{k-1}^TH_{k-1}}{\hat{s}_{k-1}^TH_{k-1}\tilde{y}_{k-1}}$, 
   and $\tilde{x}^{k+1}=x^k-H_kg_k$.
   \STATE {\bf If} $\|g_k\|\leq D\bar{U}(n_{AA}+1)^{-(1+\epsilon)}$
   \STATE \hspace{1cm} $x^{k+1}=\tilde{x}^{k+1}$, $n_{AA}=n_{AA}+1$.
   \STATE {\bf else} $x^{k+1}=f_{\alpha}(x^k)$.
   \ENDFOR
\end{algorithmic}
\end{algorithm}

We remark that the assumptions in Lemma \ref{invertible}, Lemma 
\ref{Bkbound} and Corollary \ref{Hkbound} all hold for Algorithm 
\ref{alg:AA-I-safe} unless a solution is reached and the problem 
is solved, despite that the updates are modified in line 4 and the 
safe-guarding strategy is introduced in lines 12-14. This comes 
immediately from the arbitrariness of the update sequence 
$\{s_i\}_{i=k-m_k}^{k-1}$ (c.f. Lemma \ref{invertible}),  
Formally, we have the following corollary.
\begin{corollary}\label{algHkbound}
In Algorithm \ref{alg:AA-I-safe}, the inequality (\ref{Hkbdineq}) 
holds for all $k\geq 0$. Moreover, the condition number of $H_k$ 
is uniformly bounded by 
\[
\textbf{cond}(H_k)\leq \left(3\left(\dfrac{1+\bar{\theta}+\tau}
{\tau}\right)^m-2\right)^n/\bar{\theta}^m.
\]
\end{corollary}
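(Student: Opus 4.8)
The plan is to derive Corollary~\ref{algHkbound} directly from Lemma~\ref{invertible}, Lemma~\ref{Bkbound} and Corollary~\ref{Hkbound}, the only real work being to check that the matrices produced by Algorithm~\ref{alg:AA-I-safe} are exactly the ones those results describe. First I would observe that, as long as the problem has not been solved (so $g_{k-1}\neq 0$ and hence, by Lemma~\ref{invertible}, $s_{k-1}=\tilde x^{k}-x^{k-1}=-H_{k-1}g_{k-1}\neq 0$), the re-start test in lines~6--8 forces $\hat s_{k-1}\neq 0$ whenever $s_{k-1}\neq 0$, so the rank-one update in lines~9--11 never breaks down; and lines~4--8 implement precisely the memory rule~(\ref{mk_rule}). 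Next I would identify the update in line~11 with~(\ref{Hkind}): with $s_{k-1}=\tilde x^{k}-x^{k-1}$ as in line~5 and the identity $B_k^{i}s_{k-m_k+i}=B_{k-m_k+i}s_{k-m_k+i}=-g_{k-m_k+i}$ recorded at the end of \S\ref{restart}, the vector $\tilde y_{k-1}$ in line~9 equals $\tilde y_{k-m_k+i}$ of~(\ref{ytilde}) and $\gamma_{k-1}$ in line~10 equals $\eta_k^{i}$; hence $H_k$ is the inverse of the matrix $B_k$ built by~(\ref{mod_Bkind}) (recall that, since $B_k^{i}=B_{k-m_k+i}$, only the single step $i=m_k-1$ is carried out each iteration).

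The crucial point --- the same one that powers Lemma~\ref{invertible} and Lemma~\ref{Bkbound} --- is that both of those lemmas are stated for an \emph{arbitrary} sequence $\{s_i\}_{i=k-m_k}^{k-1}$ in $\reals^n$. Consequently the features that distinguish Algorithm~\ref{alg:AA-I-safe} from Algorithm~\ref{alg:AA-I} (the particular choice of $s_{k-1}$, the safe-guarding branch in lines~12--14, and the reset $H_{k-1}=I$ at a re-start) are irrelevant: whatever sequence $\{s_i\}$ the run actually generates, Lemma~\ref{invertible} gives $|\text{det}(B_k)|\geq\bar\theta^{m_k}\geq\bar\theta^{m}$, Lemma~\ref{Bkbound} gives $\|B_k\|_2\leq 3((1+\bar\theta+\tau)/\tau)^m-2$, and therefore Corollary~\ref{Hkbound} yields~(\ref{Hkbdineq}). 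That settles the first assertion.

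For the condition number bound I would use that $H_k$ is invertible with $H_k^{-1}=B_k$, so $\textbf{cond}(H_k)=\|H_k\|_2\,\|H_k^{-1}\|_2=\|H_k\|_2\,\|B_k\|_2$. Substituting the bound $\|H_k\|_2\leq\big(3((1+\bar\theta+\tau)/\tau)^m-2\big)^{n-1}/\bar\theta^{m}$ from Corollary~\ref{Hkbound} together with $\|B_k\|_2\leq 3((1+\bar\theta+\tau)/\tau)^m-2$ from Lemma~\ref{Bkbound}, the exponents add to $n$ and one obtains $\textbf{cond}(H_k)\leq\big(3((1+\bar\theta+\tau)/\tau)^m-2\big)^{n}/\bar\theta^{m}$, as claimed.

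I expect the main (and essentially only) obstacle to be the bookkeeping of the second step above: confirming that the ``short-cut'' form of Algorithm~\ref{alg:AA-I-safe} --- a lone rank-one update with $i=m_k-1$, with the auxiliary matrices $B_k^{i}$ eliminated through $B_k^{i}s_{k-m_k+i}=-g_{k-m_k+i}$ --- really reconstructs the full inductive $B_k$ of Lemma~\ref{invertible}. Once that identification is in place, everything else is a direct substitution of the three earlier estimates; it is worth noting that the resulting bound depends neither on $k$ nor on the data-dependent sequence $\{s_i\}$, which is exactly the uniform form needed for the convergence analysis in \S\ref{global_conv}.
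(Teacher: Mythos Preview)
Your proposal is correct and follows exactly the paper's approach: the paper simply remarks that the hypotheses of Lemma~\ref{invertible}, Lemma~\ref{Bkbound} and Corollary~\ref{Hkbound} carry over to Algorithm~\ref{alg:AA-I-safe} by the arbitrariness of $\{s_i\}$, and then states that the corollary is ``a simple combination of the results in Lemma~\ref{Bkbound} and Corollary~\ref{Hkbound}.'' Your derivation of the condition-number bound via $\textbf{cond}(H_k)=\|H_k\|_2\|B_k\|_2$ is precisely this combination, and your bookkeeping discussion just spells out what the paper leaves implicit.
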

The proof is a simple combination of the results in Lemma \ref{Bkbound}
and Corollary \ref{Hkbound}.

\section{Analysis of global convergence}\label{global_conv}
In this section, we give a self-contained proof for global convergence
of Algorithm \ref{alg:AA-I-safe}. The proof can be divided into three
steps. Firstly, we prove that the residual $g_k$ converges to $0$.  We
then show that $\|x^k-y\|_2$ converges to some finite limit for any
fixed point $y\in X$ of $f$. Finally, we we show that $x^k$ converges to
some solution to (\ref{non_eq}). We note that some of the arguments are
motivated by the proofs in \cite{Fejer}.

We begin by noticing that $x^{k+1}$ either equals $x^k-H_kg_k$ or
$f_{\alpha}(x^k)$, depending on whether the checking in line 12 of
Algorithm \ref{alg:AA-I-safe} passes or not. We partition the iteration
counts into two subsets accordingly, with $K_{AA}=\{k_0,k_1,\dots\}$
being those iterations that passes line 12, while
$K_{KM}=\{l_0,l_1,\dots\}$ being the rest that goes to line 14.

\paragraph{Step 1: Convergence of $g_k$.} Consider $y\in X$ an arbitrary
fixed point of $f$.

For $k_i\in K_{AA}$ ($i\geq 0$), by Corollary \ref{algHkbound}, we have
$\|H_{k_i}\|_2\leq C$ for some constant $C$ independent of the iteration
count, and hence
\BEQ\label{ineq1}
\begin{split}
\|x^{k_i+1}-y\|_2&\leq \|x^{k_i}-y\|_2+\|H_{k_i}g_{k_i}\|_2\\
&\leq \|x^{k_i}-y\|_2+C\|g_{k_i}\|_2\leq \|x^{k_i}-y\|_2
+CD\bar{U}(i+1)^{-(1+\epsilon)}.
\end{split}
\EEQ

For $l_i\in K_{KM}$ ($i\geq 0$), since $f$ is non-expansive, by Theorem
4.25(iii) in \cite{AveOpContr} or inequality (5) in \cite{MonoPrimer},
we have that
\BEQ\label{ineq2}
\|x^{l_i+1}-y\|_2^2\leq
\|x^{l_i}-y\|_2^2-\alpha(1-\alpha)\|g_{l_i}\|_2^2\leq \|x^{l_i}-y\|_2^2.
\EEQ

By telescoping the above inequalities, we obtain that
\BEQ\label{bounded}
\|x^k-y\|_2\leq
\|x^0-y\|_2+CD\bar{U}\sum\nolimits_{i=0}^{\infty}(i+1)^{-(1+\epsilon)}=E<\infty,
\EEQ and hence $\|x^k-y\|_2$ remains bounded for all $k\geq 0$.

Hence by squaring both sides of (\ref{ineq1}), we obtain that
\BEQ\label{ineq3}
\|x^{k_i+1}-y\|_2^2\leq
\|x^{k_i}-y\|_2^2+\underbrace{(CD\bar{U})^2(i+1)^{-(2+2\epsilon)}
+2CDE\bar{U}(i+1)^{-(1+\epsilon)}}_{=\epsilon_{k_i}}.
\EEQ

Combining (\ref{ineq2}) and (\ref{ineq3}), we see that
\BEQ\label{gli}
\begin{split}
\alpha(1-\alpha)\sum\nolimits_{i=0}^{\infty}\|g_{l_i}\|_2^2\leq& 
\|x^0-y\|_2^2+\sum\nolimits_{i=0}^{\infty}\epsilon_{k_i}<\infty,
\end{split}
\EEQ and hence $\lim_{i\rightarrow\infty}\|g_{l_i}\|_2=0$. Noticing that
$\|g_{k_i}\|_2\leq D\bar{U}(i+1)^{-(1+\epsilon)}$ by line 12 of
Algorithm \ref{alg:AA-I-safe}, we also have
$\lim_{i\rightarrow\infty}\|g_{k_i}\|_2=0$. Hence we see that
\BEQ\label{res_conv}
\lim_{k\rightarrow\infty}\|g_k\|_2=0.
\EEQ

Also notice that by defining $\epsilon_{l_i}=0$, we again see from
(\ref{ineq2}) and (\ref{ineq3}) that
\BEQ\label{fejer}
\|x^{k+1}-y\|_2^2\leq \|x^k-y\|_2^2+\epsilon_k,
\EEQ with $\epsilon_k\geq 0$ and
$\sum_{k=0}^{\infty}\epsilon_k=\sum_{i=0}^{\infty}\epsilon_{k_i}<\infty$.

Notice that in the above derivation of (\ref{gli})-(\ref{fejer}), we
have implicitly assumed that both $K_{AA}$ and $K_{KM}$ are infinite.
However, the cases when either of them is finite is even simpler as one
can completely ignore the finite index set.

\paragraph{Step 2: Convergence of $\|x^k-y\|_2$.} Still consider $y\in
X$ an arbitrary fixed point of $f$. We now prove that $\|x^k-y\|_2$
converges. Since $\|x^k-y\|_2\geq 0$, there is a subsequence
$\{j_0,j_1,\dots\}$ such that
$\lim_{i\rightarrow\infty}\|x^{j_i}-y\|_2=\underline{u}
=\liminf_{k\rightarrow\infty}\|x^k-y\|_2$.
For any $\delta>0$, there exists an integer $i_0$ such that
$\|x^{j_{i_0}}-y\|_2\leq \underline{u}+\delta$ and
$\sum_{k=j_{i_0}}^{\infty}\epsilon_k\leq \delta$. This, together with
(\ref{fejer}), implies that for any $k\geq j_{i_0}$,
\BEQ\label{convergence}
\|x^k-y\|_2^2\leq
\|x^{j_{i_0}}-y\|_2^2+\sum\nolimits_{k=j_{i_0}}^{\infty}\epsilon_k\leq
\underline{u}^2+2\delta \underline{u}+\delta^2+\delta,
\EEQ   and in particular, we have
$\limsup_{k\rightarrow\infty}\|x^k-y\|_2^2\leq
\liminf_{k\rightarrow\infty}\|x^k-y\|_2^2+\delta(2\underline{u}+\delta+1)$.
By the arbitrariness of $\delta>0$, we see that $\|x^k-y\|_2^2$ (and
hence $\|x^k-y\|_2$) is convergent.

\paragraph{Step 3: Convergence of $x^k$.} Finally, we show that $x^k$
converges to some solution $x^\star$ of (\ref{non_eq}), \ie,
$x^\star=f(x^\star)$. To see this, notice that since $\|x^k-y\|_2$ is
bounded for $y\in X$, $x^k$ is also bounded. Hence it must have a
convergent subsequence by Weierstrass theorem.

Suppose on the contrary that $x^k$ is not convergent, then there must be
at least two different subsequences $\{k_0',k_1',\dots\}$ and
$\{l_0',l_1',\dots\}$ converging to two different limits $y_1\neq y_2$,
both of which must be fixed points of $f$. This is because that by
(\ref{res_conv}), we have
\[
0=\lim\nolimits_{i\rightarrow\infty}\|g(x^{k_i'})\|_2=\|g(y_1)\|_2,\quad
0=\lim\nolimits_{i\rightarrow\infty}\|g(x^{l_i'})\|_2=\|g(y_2)\|_2,
\] where we used the fact that $f$ is non-expansive and hence
$g(x)=x-f(x)$ is (Lipschitz) continuous.   Now notice that we have
proved that $\alpha(y)=\lim_{k\rightarrow\infty}\|x^k-y\|_2$ exists for
any $y\in X$. By the simple fact that
$\|x^k-y\|_2^2-\|y\|_2^2=\|x^k\|_2^2-2y^Tx^k$, we have
\[
\begin{split}
&\lim_{i\rightarrow\infty}\|x^{k_i'}\|_2^2=\lim_{k\rightarrow\infty}
\|x^k-y\|_2^2-\|y\|_2^2+2y^T\lim_{i\rightarrow\infty}x^{k_i'}=\alpha(y)
-\|y\|_2^2+2y^Ty_1,\\
&\lim_{i\rightarrow\infty}\|x^{l_i'}\|_2^2=\lim_{k\rightarrow\infty}
\|x^k-y\|_2^2-\|y\|_2^2+2y^T\lim_{i\rightarrow\infty}x^{l_i'}=\alpha(y)
-\|y\|_2^2+2y^Ty_2.
\end{split}
\] Subtracting the above inequalities, we obtain that for any $y\in X$,
\[
2y^T(y_1-y_2)=\lim_{i\rightarrow\infty}\|x^{k_i'}\|_2^2
-\lim_{i\rightarrow\infty}\|x^{l_i'}\|_2^2.
\]  By taking $y=y_1$ and $y=y_2$, we see that
$y_1^T(y_1-y_2)=y_2^T(y_1-y_2)$, which implies that $y_1=y_2$, a
contradiction. Hence we conclude that $x^k$ converges to some $\bar{x}$,
which must be a solution as we have by (\ref{res_conv}) that
$0=\lim_{k\rightarrow\infty}\|g(x^k)\|_2=\|g(\bar{x})\|_2$. Here we
again used the fact that $f$ is non-expansive, and hence $g(x)=x-f(x)$
is (Lipschitz) continuous.

In sum, we have the following theorem.
\begin{theorem}\label{glb_conv_thm}
Suppose that $\{x^k\}_{k=0}^{\infty}$ is generated by Algorithm 
\ref{alg:AA-I-safe}, then we have $\lim_{k\rightarrow\infty}x^k=x^\star$,
 where $x^\star=f(x^\star)$ is a solution to (\ref{non_eq}).
\end{theorem}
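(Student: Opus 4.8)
The plan is to show that the sequence $\{x^k\}$ is \emph{quasi-Fej\'er monotone} with respect to the solution set $X$, and then run the standard machinery for such sequences. The starting point is that every iterate is either an accelerated step $x^{k+1}=x^k-H_kg_k$ (indexed by a set $K_{AA}$) or a safe-guarded KM step $x^{k+1}=f_\alpha(x^k)$ (indexed by $K_{KM}$). Fix an arbitrary $y\in X$. For a KM step, non-expansiveness of $f$ gives the familiar averaged-operator inequality $\|x^{k+1}-y\|_2^2\le\|x^k-y\|_2^2-\alpha(1-\alpha)\|g_k\|_2^2$. For an accelerated step, the triangle inequality together with the uniform bound $\|H_k\|_2\le C$ from Corollary~\ref{algHkbound} and the safe-guard test $\|g_k\|_2\le D\bar{U}(n_{AA}+1)^{-(1+\epsilon)}$ yields $\|x^{k+1}-y\|_2\le\|x^k-y\|_2+CD\bar{U}(n_{AA}+1)^{-(1+\epsilon)}$, and the right-hand perturbation is summable over all accelerated steps since $\sum_i (i+1)^{-(1+\epsilon)}<\infty$.

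First I would telescope these two families of inequalities to conclude that $\|x^k-y\|_2$ stays bounded by some finite $E$ uniformly in $k$. Squaring the accelerated-step inequality and inserting this bound produces a quasi-Fej\'er inequality $\|x^{k+1}-y\|_2^2\le\|x^k-y\|_2^2+\epsilon_k$ with $\epsilon_k\ge 0$ and $\sum_k\epsilon_k<\infty$ (setting $\epsilon_k=0$ on KM steps). Combining this with the strict decrease on KM steps and telescoping once more gives $\sum_{i}\|g_{l_i}\|_2^2<\infty$, hence $g_k\to 0$ along $K_{KM}$; since the safe-guard test forces $\|g_k\|_2\le D\bar{U}(n_{AA}+1)^{-(1+\epsilon)}\to 0$ along $K_{AA}$ as well, we obtain $\|g_k\|_2\to 0$ overall. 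The degenerate cases in which $K_{AA}$ or $K_{KM}$ is finite only simplify this bookkeeping.

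Next I would extract convergence of the scalar sequence $\|x^k-y\|_2$ from the quasi-Fej\'er inequality: given $\delta>0$, pick an index $j$ along a subsequence realizing $\liminf_k\|x^k-y\|_2$ with $\|x^j-y\|_2$ close to that liminf and with tail $\sum_{k\ge j}\epsilon_k\le\delta$; then every $k\ge j$ keeps $\|x^k-y\|_2^2$ within $O(\delta)$ of the liminf, so $\limsup=\liminf$. Finally, boundedness of $\{x^k\}$ yields a convergent subsequence by Weierstrass, and any subsequential limit is a fixed point because $g=\mathrm{id}-f$ is Lipschitz continuous and $g_k\to 0$. Uniqueness of the limit is the last step: if two subsequences converged to distinct $y_1,y_2\in X$, I would use the identity $\|x^k-y\|_2^2-\|y\|_2^2=\|x^k\|_2^2-2y^Tx^k$ together with the existence of $\lim_k\|x^k-y\|_2$ for every $y\in X$ to get $2y^T(y_1-y_2)=\lim_i\|x^{k_i'}\|_2^2-\lim_i\|x^{l_i'}\|_2^2$, a quantity independent of $y$; taking $y=y_1$ and then $y=y_2$ forces $y_1=y_2$, a contradiction.

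The main obstacle is conceptual rather than computational: an accelerated step can genuinely move the iterate \emph{away} from $X$, so $\{x^k\}$ is not Fej\'er monotone in the usual sense, and there is no descent quantity controlling the $K_{AA}$ steps by themselves. The crucial leverage is that the safe-guard test only admits an accelerated step when the residual already lies below a summable threshold, and that $\|H_k\|_2$ is uniformly bounded --- which is exactly what the Powell-type regularization and re-start checking of \S~\ref{powell} and \S~\ref{restart} buy us, through Corollary~\ref{algHkbound}. Together these convert the damage done by accelerated steps into a summable perturbation $\epsilon_k$, precisely the hypothesis a quasi-Fej\'er argument needs; once this reframing is in place, the remaining steps are routine.
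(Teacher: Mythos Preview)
Your proposal is correct and follows essentially the same route as the paper's own proof: partition into $K_{AA}$ and $K_{KM}$, combine the averaged-operator inequality on KM steps with the summable perturbation bound (via Corollary~\ref{algHkbound} and the safe-guard test) on accelerated steps to obtain quasi-Fej\'er monotonicity, deduce $g_k\to 0$ and convergence of $\|x^k-y\|_2$ for each $y\in X$, and finish with the subsequential-limit uniqueness argument using the identity $\|x^k-y\|_2^2-\|y\|_2^2=\|x^k\|_2^2-2y^Tx^k$. The three-step structure, the intermediate boundedness constant $E$, and even the handling of the degenerate finite-index-set cases match the paper's argument almost line for line.
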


\section{Numerical results}\label{exam_aa}
In this section we present examples to demonstrate the power of
AA-I. The major focus is on optimization problems and algorithms, where
$f$ in (\ref{non_eq}) comes from the iterative algorithms used to solve
them. For each example, we specify the concrete form of $f$, verify its
non-expansiveness, and check the equivalence between the fixed-point
problem and the original problem.

We compare the performance of the following three algorithms for each
experiment:
\begin{itemize}
 \item  The vanilla algorithm: \eg, gradient descent;
 \item  AA-I-m: Algorithm \ref{alg:AA-I} with max-memory $m$, choosing 
 $m_k=\min\{m,k\}$;
 \item  AA-I-S-m: Algorithm \ref{alg:AA-I-safe} with max-memory $m$.
\end{itemize}
The convergence curves against both clock time (seconds) and iteration numbers
will be shown.

\subsection{Problems and algorithms}\label{prob_alg} 
We begin by describing some example problems and the corresponding
(unaccelerated) algorithms used to solve them.

\subsubsection{Proximal gradient descent}\label{prox_grad}
Consider the following problem:
\BEQ\label{composite}
\begin{array}{ll}
\mbox{minimize} & F_1(x)+F_2(x),\\
\end{array}
\EEQ where $F_1,~F_2:\reals^n\rightarrow\reals$ are convex closed proper
(CCP), and $F_1$ is $L$-strongly smooth.

We solve it using proximal gradient descent, \ie,
\[
\begin{split}
x^{k+1}=\text{prox}_{\alpha F_2}(x^k-\alpha \nabla F_1(x^k)),
\end{split}
\] where $\alpha\in (0,2/L)$.  In our notation, the fixed-point mapping
is $f(x)=\text{prox}_{\alpha F_2}(x-\alpha\nabla F_1(x))$. For a proof
of non-expansiveness for $f$ and the equivalence between the fixed-point
problem and the original optimization problem (\ref{composite}), see
\cite{Proximal}.

\paragraph{Gradient descent (GD).}  When $F_2= 0$, proximal gradient
descent reduces to vanilla gradient descent for unconstrained problems,
\ie, (denoting $F=F_1$)
\[ x^{k+1}=x^k-\alpha \nabla F(x^k),
\] where $\alpha\in(0,2/L)$, and the fixed-point mapping is
$f(x)=x-\alpha\nabla F(x)$.

\paragraph{Projected gradient descent (PGD).}  When
$F_2(x)=\mathcal{I}_{\mathcal{K}}(x)$, with $\mathcal{K}$ being a
nonempty closed and convex set, problem (\ref{composite}) reduces to a
constrained optimization problem. Accordingly, proximal gradient descent
reduces to projected gradient descent, \ie, (denoting $F=F_1$)
\[
\begin{split}
x^{k+1}=\Pi_{\mathcal{K}}(x^k-\alpha\nabla F(x^k)),
\end{split}
\] where $\alpha\in (0,2/L)$, and the fixed-point mapping is
$f(x)=\Pi_{\mathcal{K}}(x-\alpha\nabla F(x))$.

\paragraph{Alternating projection (AP).} When
$F_1(x)=\frac{1}{2}\text{dist}(x,D)^2$ and $F_2(x)=\mathcal{I}_{C}(x)$,
with $C,~D$ being nonempty closed convex sets and $C\cap D\neq
\emptyset$. The problem (\ref{composite}) then reduces to finding an
element $x$ in the intersection $C\cap D$. Noticing that $F_1$ is
$1$-smooth, by choosing $\alpha=1$, proximal gradient descent reduces to
alternating projection, \ie,
\[ x^{k+1}=\Pi_C\Pi_D(x^k),
\] with $f(x)=\Pi_C\Pi_D(x)$.

\paragraph{ISTA.} When $F_2=\mu\|x\|_1$, the problem reduces to
sparsity-regularized regression (\eg, Lasso, when $F_1$ is quadratic).
Accordingly, proximal gradient descent reduces to Iterative
Shrinkage-Thresholding Algorithm (ISTA), \ie, (denoting $F=F_1$)
\[ x^{k+1}=S_{\alpha\mu}(x^k-\alpha\nabla F(x^k)),
\] where $\alpha\in (0,2/L)$, and
\[ S_{\kappa}(x)_i=\textbf{sign}(x_i)(|x_i|-\kappa)_+, \quad
i=1,\dots,n,
\]  is the shrinkage operator. The fixed-point mapping here is
$f(x)=S_{\alpha\mu}(x-\alpha\nabla F(x))$.

\subsubsection{Douglas-Rachford splitting}
Consider the following problem:
\BEQ\label{composite_op}
\mbox{find $x$ such that}~~ 0\in (A+B)(x),
\EEQ where $A,~B:\reals^n\rightarrow2^{\reals^n}$ are two maximal
monotone relations.

Douglas-Rachford splitting (DRS) solves this problem by the following
iteration scheme:
\BEQ\label{DRS} z^{k+1}=f(z^k)=z^k/2+C_{A}C_{B}(z^k)/2,
\EEQ where $C_G$ is the Cayley operator of $G$, defined as
$C_G(x)=2(I+\alpha G)^{-1}(x)-x$, where $I$ is the identity mapping, and
$\alpha>0$ is an arbitrary constant.  Since the Cayley operator $C_G$ of
a maximal monotone relation $G$ is non-expansive and defined over the
entire $\reals^n$, we see that the fixed-point mapping
$f(x)=x/2+C_{A}C_{B}(x)/2$ is a $\frac{1}{2}$-averaged (and hence
non-expansive) operator. The connection between (\ref{composite_op}) and
(\ref{DRS}) is established by the fact that $x$ solves
(\ref{composite_op}) if and only if $z$ solves (\ref{DRS}) and
$x=R_B(z)$, where $R_B$ is the resolvent operator of $B$,
$R_B(x) = (I+\alpha B)^{-1}$.

Below we will implicitly use the facts that subgradients of CCP
functions, linear mappings $Mx$ with $M+M^T\succeq 0$, and normal cones
of nonempty closed convex sets are all maximal monotone. These facts, as
well as the equivalence between (\ref{composite_op}) and (\ref{DRS}),
can all be found in \cite{MonoPrimer}.

Notice that whenever $z^k$ converges to a fixed-point of (\ref{DRS})
(not necessarily following the DRS iteration (\ref{DRS})),
$x^k=R_{B}(z^k)$ converges to a solution of problem
(\ref{composite_op}), where $R_B(x)=(I+\alpha B)^{-1}(x)$ is the
resolvent of $B$. This comes immediately from the equivalence between
(\ref{composite_op}) and (\ref{DRS}) and the fact that $R_B$ is
non-expansive \cite{MonoPrimer} and hence continuous. Together with
Theorem \ref{glb_conv_thm}, this ensures that the application of
Algorithm \ref{alg:AA-I-safe} to the DRS fixed-point problem (\ref{DRS})
leads to the convergence of $x^k=R_B(z^k)$ to a solution of the original
problem.

\paragraph{Consensus optimization (CO).} In consensus optimization
\cite{MonoPrimer}, we seek to solve
\BEQ\label{consensus}
\begin{array}{ll}
&\mbox{minimize}~~\sum_{i=1}^mF_i(x)\\
\end{array}
\EEQ where $F_i:\reals^n\rightarrow\reals$ are all CCP. Rewriting the
problem as
\BEQ\label{consensus2}
\begin{array}{ll}
&\mbox{minimize}~~\sum_{i=1}^mF_i(x_i)+\mathcal{I}_{\{x_1=x_2=\dots=x_m\}}
(x_1,x_2,\dots,x_m),\\
\end{array}
\EEQ the problem reduces to (\ref{composite_op}) with
\[
\begin{split}
A(x)&=(\partial F_1(x_1),\dots,\partial F_m(x_m))^T,\\
B(x)&=\mathcal{N}_{\{x_1=x_2=\dots=x_m\}}(x_1,\dots,x_m).
\end{split}
\]

Since for a CCP function $F:\reals^n\rightarrow\reals$ and a nonempty
closed convex set $C$, $C_{\partial F}(x)=2\text{prox}_{\alpha F}(x)-x$
and $C_{\mathcal{N}_C}(x)=\Pi_C(x)$, we see that the DRS algorithm
reduces to the following:
\[
\begin{split}
x_i^{k+1}&=\text{argmin}_{x_i}~F_i(x_i)+(1/2\alpha)\|x_i-z_i^k\|_2^2,\\
z_i^{k+1}&=z_i^k+2\bar{x}^{k+1}-x_i^{k+1}-\bar{z}^k, \quad i=1,\dots,m.
\end{split}
\] where $\bar{x}^k=\frac{1}{m}\sum_{i=1}^mx_i$, and the fixed-point
mapping $f$ is the mapping from $z^k$ to $z^{k+1}$. As discussed above,
$x^{k+1}$ converges to the solution of (\ref{consensus}) if $z^k$
converges to the fixed-point of $f$, and hence can be deemed as
approximate solutions to the  original problem.

\paragraph{SCS.} Consider the following generic conic optimization
problem:
\BEQ\label{co}
\begin{array}{ll}
\mbox{minimize} & c^Tx\\
\mbox{subject to} & Ax+s=b,\quad s\in \mathcal{K}.
\end{array}
\EEQ where $A\in\reals^{m\times n}$, $b\in\reals^m$, $c\in\reals^n$, and
$\mathcal{K}$ is a nonempty, closed and convex cone. Our goal here is to
find both primal and dual solutions when they are available, and provide
a certificate of infeasibility or unboundedness otherwise \cite{SCS}. To
this end, one seeks to solve the associated self-dual homogeneous
embedding (SDHE) system \cite{SDHE},
\BEQ\label{sdhe} Qu=v,\quad (u,v)^T\in\mathcal{C}\times\mathcal{C}^*,
\EEQ where $u=(x,y,\tau)^T\in\reals^n\times\reals^m\times \reals$,
$v=(r,s,\kappa)^T\in\reals^n\times\reals^m\times \reals$,
$\mathcal{C}=\reals^n\times\mathcal{K}^*\times\reals_+$,
$\mathcal{C}^*=\{0\}^n\times \mathcal{K}\times\reals_+$ is the dual cone
of $\mathcal{C}$, and the SDHE embedding matrix
\[ Q=\left[\begin{array}{lll} 0 & A^T & c\\ -A & 0 & b\\ -c^T &-b^T & 0
\end{array}\right].
\] The SDHE system can then be further reformulated into
(\ref{composite_op}) (\cite{SuperMann}), with
$A(u)=\mathcal{N}_{\mathcal{C}}(u)$, $B(u)=Qu$.     Accordingly, DRS
reduces to splitting conic solver (SCS) \cite{SCS}, \ie,
\[
\begin{split}
\tilde{u}^{k+1}&=(I+Q)^{-1}(u^k+v^k)\\
u^{k+1}&=\Pi_{\mathcal{C}}(\tilde{u}^{k+1}-v^k)\\
v^{k+1}&=v^k-\tilde{u}^{k+1}+u^{k+1},
\end{split}
\]  Notice that here we have actually used an equivalent form of DRS
described in \cite{slides} with change of variables. In our notation,
the fixed-point mapping $f$ is
\[ f(u,v)=
\left[\begin{array}{l}
\Pi_{\mathcal{C}}((I+Q)^{-1}(u+v)-v)\\
v-(I+Q)^{-1}(u+v)+u
\end{array}\right],
\] which is non-expansive (\cf, the appendix in \cite{SCS}).

Notice that with the transformations made, the equivalence and
convergence properties of DRS can not be directly applied here as in the
previous examples. Nevertheless, the equivalence between the fixed-point
problem and the SDHE system here can be seen directly by  noticing that
$f(u,v)=(u,v)^T$ if and only if
\[ (I+Q)^{-1}(u+v)=u,\quad \Pi_{\mathcal{C}}((I+Q)^{-1}(u+v)-v)=u,
\]
\ie, $Qu=v$ and $\Pi_{\mathcal{C}}(u-v)=u$. By Moreau decomposition
\cite{Proximal}, we have
\[
\Pi_{\mathcal{C}}(u-v)+\Pi_{-\mathcal{C}^*}(u-v)=u-v,
\] and hence
\[
\Pi_{\mathcal{C}}(u-v)=u\Leftrightarrow
\Pi_{-\mathcal{C}^*}(u-v)=-v\Leftrightarrow \Pi_{\mathcal{C}^*}(v-u)=v.
\] Hence we see that $f(u,v)=(u,v)^T\Rightarrow Qu=v, ~(u,v)^T\in
\mathcal{C}\times\mathcal{C}^*$. On the other hand, when $Qu=v$ and
$(u,v)\in \mathcal{C}\times\mathcal{C}^*$, we have $u^Tv=u^TQu=0$ by the
skew-symmetry of $Q$, and hence for any $w\in \mathcal{C}$,
\[
\|u-v-w\|_2^2=\|u-w\|_2^2+\|v\|_2^2-2v^T(u-w)=\|u-w\|_2^2+\|v\|_2^2+2v^Tw\geq
\|v\|_2^2,
\] where the last inequality comes from the fact that $v^Tw\geq 0$ as
$v\in \mathcal{C}^*$ and $w\in \mathcal{C}$, and the equality is
achieved if and only if $u=w$. Hence we have $\Pi_{\mathcal{C}}(u-v)=u$,
from which we conclude that $(u,v)^T$ is a fixed-point of $f$ if and
only if $Qu=v$, $(u,v)^T\in\mathcal{C}\times\mathcal{C}^*$, \ie,
$(u,v)^T$ solves the SDHE system.

\subsubsection{Contractive mappings in different norms}\label{contract_norm}
As we can see from (\ref{ineq2}) in the proof of Theorem
\ref{glb_conv_thm}, which does not hold for general norms, the
$\ell_2$-norm in the definition of non-expansiveness is essential to our
analysis of global convergence. Nevertheless, an expansive mapping in
one norm may be non-expansive or even contractive in another norm, as we
will see in the examples below. When a mapping is actually contractive
in some (arbitrary) norm, the global convergence of Algorithm
\ref{alg:AA-I-safe} can still be guaranteed. Formally, we have the
following theorem.

\begin{theorem}\label{mdp_conv}
Suppose that $\{x^k\}_{k=0}^{\infty}$ is generated by Algorithm 
\ref{alg:AA-I-safe}, but with $\alpha=1$, and instead of $f$ being 
non-expansive (in $\ell_2$-norm) in (\ref{non_eq}), $f$ is $\gamma$-contractive 
in some (arbitrary) norm $\|\cdot\|$ (\eg, $l_{\infty}$-norm) on $\reals^n$, 
where $\gamma\in(0,1)$. Then we still have $\lim_{k\rightarrow\infty}x^k=x^\star$, 
where $x^\star=f(x^\star)$ is a solution to (\ref{non_eq}).
\end{theorem}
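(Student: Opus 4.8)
The strategy is to reduce the contractive case to a setting where the three-step argument of Theorem \ref{glb_conv_thm} goes through, the key difference being that a $\gamma$-contraction has a \emph{unique} fixed point $x^\star$, so the Fej\'er-monotonicity bookkeeping over an arbitrary $y\in X$ collapses to bookkeeping over that single point, and Step~2 and Step~3 become nearly trivial. First I would observe that since $\alpha=1$, the KM step is just $x^{k+1}=f(x^k)$, and on $K_{KM}$ we have $\|x^{l_i+1}-x^\star\|\le \gamma\|x^{l_i}-x^\star\|$ in the norm $\|\cdot\|$. On $K_{AA}$, the update is $x^{k_i+1}=x^{k_i}-H_{k_i}g_{k_i}$, and by Corollary \ref{algHkbound} the operator $H_{k_i}$ is uniformly bounded in $\ell_2$-norm; by equivalence of norms on $\reals^n$ there is a constant $C'$ with $\|H_{k_i}z\|\le C'\|z\|$ for all $z$, so $\|x^{k_i+1}-x^\star\|\le \|x^{k_i}-x^\star\|+C'\|g_{k_i}\|$. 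The safeguard check in line~12 still controls $\|g_{k_i}\|_2\le D\bar U(n_{AA}+1)^{-(1+\epsilon)}$, hence (again by norm equivalence) $\|g_{k_i}\|\le C''(i+1)^{-(1+\epsilon)}$, whose sum is finite.

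Next I would run the telescoping argument in the norm $\|\cdot\|$: combining the contractive decrease on $K_{KM}$ with the summable increases on $K_{AA}$ shows $\|x^k-x^\star\|$ stays bounded, say by $E$. Then, writing $a_k=\|x^k-x^\star\|$, on $K_{KM}$ we have $a_{l_i+1}^2\le \gamma^2 a_{l_i}^2$, i.e. $a_{l_i}^2-a_{l_i+1}^2\ge (1-\gamma^2)a_{l_i}^2$, while on $K_{AA}$ we have $a_{k_i+1}^2\le a_{k_i}^2+\epsilon_{k_i}$ with $\sum_i\epsilon_{k_i}<\infty$ (here $\epsilon_{k_i}$ absorbs the cross term $2E C''(i+1)^{-(1+\epsilon)}$ and the square term). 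Summing the per-step differences telescopes to $(1-\gamma^2)\sum_i a_{l_i}^2 \le a_0^2+\sum_i\epsilon_{k_i}<\infty$, so $a_{l_i}\to 0$; and on $K_{AA}$, $a_{k_i+1}\le a_{k_i}+C''(i+1)^{-(1+\epsilon)}$ together with $a_{l_i}\to0$ and the fact that the $K_{AA}$-blocks between consecutive $K_{KM}$ indices contribute a tail of the summable series forces $a_k\to 0$ along the whole sequence. (As in the original proof, the cases where $K_{AA}$ or $K_{KM}$ is finite are easier and handled separately — if $K_{KM}$ is cofinite the contraction alone gives it; if $K_{AA}$ is cofinite, then eventually only AA steps are taken, but then $\|g_k\|_2\le D\bar U(n_{AA}+1)^{-(1+\epsilon)}\to0$ directly, and $x^{k+1}-x^k=-H_kg_k\to0$, so $x^k$ is Cauchy.) Hence $x^k\to x^\star$, which is the unique fixed point of $f$ by the Banach contraction principle, and $x^\star=f(x^\star)$ solves (\ref{non_eq}).

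The main obstacle, and the one place requiring care, is that almost every inequality in the original proof is an $\ell_2$-inequality (the averaged-operator inequality (\ref{ineq2}), the boundedness estimates for $H_k$), whereas the contraction hypothesis lives in a possibly different norm $\|\cdot\|$. The clean fix is to pick \emph{one} norm for the whole argument — the contraction norm $\|\cdot\|$ — and pay a dimension-dependent constant factor whenever converting the $\ell_2$ bound on $H_k$ from Corollary \ref{algHkbound} into a bound on $\|H_k\cdot\|$, and similarly when converting the line-12 safeguard $\|g_k\|_2\le\cdots$ into a bound on $\|g_k\|$. Since these conversion constants are fixed (depending only on $n$ and the choice of norm, not on $k$), they do not disturb summability, and the rest of the argument is formally identical to Steps~1–3, with the bonus that uniqueness of the fixed point makes Steps~2 and~3 immediate. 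I would also note explicitly why $\alpha=1$ is needed: for $\alpha<1$ the KM step $f_\alpha=(1-\alpha)I+\alpha f$ need not be a contraction in $\|\cdot\|$ (it is a convex combination, but $(1-\alpha)\|x-x^\star\|+\alpha\gamma\|x-x^\star\|$ still contracts — actually it does; the real reason $\alpha=1$ is imposed is that the residual $g$ and the safeguard are designed around $f$ directly and the averaged operator's contraction factor $1-\alpha(1-\gamma)$ would merely complicate constants), so I would simply remark that taking $\alpha=1$ keeps the KM fallback equal to the contraction itself and streamlines the bookkeeping.
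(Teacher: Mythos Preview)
Your plan is correct and shares the paper's overall scaffolding: work entirely in the contraction norm $\|\cdot\|$, import the $\ell_2$ bounds on $H_k$ and on $\|g_{k_i}\|_2$ via norm equivalence, and split into the cases where $K_{KM}$ (or $K_{AA}$) is finite versus infinite. Where you diverge is in the core telescoping. The paper telescopes the contraction directly between consecutive KM indices to obtain a geometric--convolution bound
\[
\|x^{l_i}-x^\star\|\le \gamma^i\|x^{l_0}-x^\star\|+\sum_{k'=0}^{i-1}\gamma^{k'}a_{i-1-k'},
\]
where $a_j$ is the total AA increment accumulated in the block $(l_j,l_{j+1})$, and then splits the convolution into two halves to show it vanishes. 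You instead square and telescope as in the non-expansive proof of Theorem~\ref{glb_conv_thm}, extracting $(1-\gamma^2)\sum_i a_{l_i}^2<\infty$ and hence $a_{l_i}\to 0$. Both routes are valid; the paper's is closer to a rate argument (the $\gamma^i$ is explicit), while yours has the virtue of reusing the Step~1 machinery verbatim and making the parallel with Theorem~\ref{glb_conv_thm} transparent.

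One point to tighten: in the case $K_{KM}$ finite you write ``$x^{k+1}-x^k=-H_kg_k\to 0$, so $x^k$ is Cauchy.'' The implication as stated is false in general (consecutive differences tending to zero does not force Cauchy). What you actually have---and should say---is that $\|x^{k+1}-x^k\|\le C'\|g_k\|\le C'C''D\bar U(i+1)^{-(1+\epsilon)}$ is \emph{summable}, hence $\{x^k\}$ is Cauchy. You have already established this summability elsewhere, so this is a wording fix rather than a gap, but it should be corrected. Your final remark on $\alpha=1$ is also consistent with the paper: the result does extend to $\alpha\in(0,1)$ with contraction factor $(1-\alpha)+\alpha\gamma$, and $\alpha=1$ is chosen only because it gives the sharpest constant.
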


The proof can be found in the appendix. Notice that the global
convergence in the above algorithm also holds for $\alpha\in(0,1)$, and
the proof is exactly the same apart from replacing $\gamma$ with
$(1-\alpha)+\alpha\gamma$, which is larger than $\gamma$ but is still
smaller than $1$. The only reason for specifying $\alpha=1$ is that it
gives the fastest convergence speed both in theory and practice for
contractive mappings.

\paragraph{Value iteration (VI).} Consider solving a discounted Markov
decision process (MDP) problem with (expected) reward $R(s,a)$,
transition probability $P(s,a,s')$,  initial state distribution
$\pi(\cdot)$, and discount factor $\gamma\in(0,1)$, where $s,\,s'\in
\{1,\dots,S\}$ and $a\in \{1,\dots,A\}$. The goal is to maximize the
(expected) total reward
$\mathbb{E}_{\pi}[\sum_{t=0}^{\infty}\gamma^tr(s_t,\mu(s_t))]$ over all
possible (stationary) policies
$\mu:\{1,\dots,S\}\rightarrow\{1,\dots,A\}$, where $s_{t+1}\sim
P(s_t,\mu(s_t),\cdot)$.

One of the most basic algorithms to solve this problem is the well-known
value iteration algorithm:
\[ x^{k+1}=Tx^k,
\] where $x^k$ approximates the optimal value function
$V^{\star}(s)=\max_{\mu}\mathbb{E}[\sum_{t=0}^{\infty}\gamma^tr(s_t,\mu(s_t))|s_0=s]$, and
$T:\reals^S\rightarrow\reals^S$ is the Bellman operator:
\[
(Tx)_s=\max_{a=1,\dots,A}R(s,a)+\gamma\sum\nolimits_{s'=1}^SP(s,a,s')x_{s'}.
\]

In our notation, the fixed-point mapping $f(x)=T(x)$. A prominent
property of $T$ is that although not necessarily non-expansive in
$\ell_2$-norm, it is $\gamma$-contractive under the $l_{\infty}$-norm, \ie,
\[
\|Tx-Ty\|_{\infty}\leq\gamma\|x-y\|_{\infty}.
\]

By Theorem \ref{mdp_conv}, the global convergence is still guaranteed
when Algorithm \ref{alg:AA-I-safe} is applied to VI here. We also remark
that it would be interesting to apply the accelerated VI to solving the
MDP subproblems in certain reinforcement learning algorithms (\eg, PSRL
\cite{PSRL}, UCRL2 \cite{UCRL2}), where the rewards $r$ and transitions
$P$ are unknown.

\paragraph{Heavy ball (HB).} Consider the following convex quadratic
program (QP),
\BEQ\label{setconstr}
\begin{array}{ll}
\mbox{minimize} & F(x)=\frac{1}{2}x^TAx+b^Tx+c\\
\end{array}
\EEQ where $A\in\reals^{n\times n}$ is positive definite with its
eigenvalues lying between $\mu$ and $L$, $b$ is a constant vector, and
$c$ is a constant scalar.  Equivalently, we consider solving the
nonsingular linear equation $Ax+b=0$. Notice that this is just a special
case of the optimization problem for gradient descent described above,
and the unique optimizer is simply $x^\star=-A^{-1}b$ and can be
obtained by solving the corresponding linear equation. But here we
instead consider solving it using the heavy-ball method, which enjoys a
faster linear convergence rate than the vanilla gradient descent
\cite{HBconv}.

The heavy ball (HB) method is a momentum-based variant of the usual
gradient descent, which takes the following form of iterations:
\[ x^{k+1}= x^k-\alpha(Ax^k+b)+\beta(x^k-x^{k-1}),
\] where $\alpha=\frac{4}{(\sqrt{L}+\sqrt{\mu})^2}$ and
$\beta=\frac{\sqrt{L}-\sqrt{\mu}}{\sqrt{L}+\sqrt{\mu}}$.

Viewing $(x^k,x^{k-1})^T$ as the iteration variable, the fixed-point
mapping $f$  is
\[ f(x',x)=\left[\begin{array}{c} x'-\alpha(Ax'+b)+\beta(x'-x)\\ x'
\end{array}\right]=T\left[
\begin{array}{c}
x'\\
x
\end{array}\right]+h,
\] where
\[ T=
\left[\begin{array}{cc}
(1+\beta)I-\alpha A & -\beta I\\
I & 0
\end{array}\right],\quad h=\left[
\begin{array}{c}
-\alpha b\\
0
\end{array}\right],
\] in which $z$ lies on the segment between $x$ and $x'$, and $I$ is the
$n$-by-$n$ identity matrix.

It's easy to see that $(x',x)^T$ is a fixed-point of $f$ if and only if
$x=x'$ and $Ax'+b=0$, and hence $x=x'$ are both solutions to the
original problem.

In general, $f$ may not be non-expansive in $\ell_2$-norms. However, for any  norm
$\|\cdot\|$ on $\reals^n$, 
\[
\|f(x',x)-f(y',y)\|\leq
\|T\|\|(x'-y',x-y)^T\|,
\] where we use the same notation for the induced norm of $\|\cdot\|$ on
$\reals^{n\times n}$, \ie, $\|T\|=\sup_{x\neq 0}\|Tx\|/\|x\|$.  By
noticing that the eigenvalues of $A$ all lie between $\mu$ and $L$, we
see that the spectral radius of $T$ is upper bounded by \cite{HBconv}
\[
\rho(T)\leq (\sqrt{\kappa}-1)/(\sqrt{\kappa}+1)<1,
\] where $\kappa=L/\mu$. Hence for any sufficiently small $\epsilon$
satisfying $\frac{\sqrt{\kappa}-1}{\sqrt{\kappa}+1}+\epsilon<1$, we can 
define the norm $\|\cdot\|$ as $\|x\|=\|D(1/\epsilon)S^{-1}x\|_1$,
where $\|\cdot\|_1$ is the $l_1$-norm,
\[ T=S ~\textbf{diag}\left(
J_{n_1}(\lambda_1),J_{n_2}(\lambda_2),\dots,J_{n_k}(\lambda_k)
\right) S^{-1}
\] is the Jordan decomposition of $T$, and
\[
D(\eta)=\textbf{diag}\left(D_{n_1}(\eta),D_{n_2}(\eta),\dots,D_{n_k}(\eta)\right),
\] in which  $D_m(\eta)=\textbf{diag}(\eta,\eta^2,\dots,\eta^m)$. Then we
have $\gamma=\|T\|\leq \rho(T)+\epsilon<1$ \cite{SpecRadBd}, and hence
$f$ is $\gamma$-contractive in the norm $\|\cdot\|$.

We remark that the although the above example seems to be a bit trivial
as $f$ is an affine mapping, there has been no global convergence result
even for these simple cases as the existing analysis for applying AA to
linear equations all require a full memory \cite{WalkerNi, ConvLinear,
ConvDIIS}. This indicates that even for affine mappings, one may not be
able to avoid our analysis based on non-expansiveness or contractivity.
And the HB example here further exemplifies the flexibility of choosing
norms for verifying these properties, and restriction to the $\ell_2$-norm
is unnecessary.

We also remark that similar analysis may be conducted for general
strongly convex and strongly smooth objective $F$, and a convex set
constraint may be included by adding a projection step on top of HB. But
here we restrict to the above toy case for succinctness, and we leave
the more general scenarios for future work.

\subsection{Numerical experiments}
We are now ready to illustrate the performance of the Anderson
Acceleration algorithms with the example problems and (unaccelerated)
algorithms above. All the experiments are run using Matlab 2014a on a
system with two 1.7 GHz cores and 8 GB of RAM, running macOS X El
Capitan.

For each experiment, we show the convergence curves of one
representative run against clock time (seconds) and iteration numbers,
respectively. The instances we show below are slightly biased towards
more difficult ones to better exemplify the improvement of AA-I-S-m
(Algorithm \ref{alg:AA-I-safe}) over the original AA-I-m (Algorithm
\ref{alg:AA-I}) .  However, in fact our modified algorithm outperforms
the original AA-I-m in more than $80\%$ of the tests we tried, and is at
least as good as AA-I-m in almost all cases, both in terms of iteration
numbers and clock time.

The codes for the experiments, including some further comparisons with
other algorithms (\eg, AA-II and its regularized version \cite{RNA},
which are also beaten by our algorithm in most cases, but we only
present results focusing on the comparison within the AA-I algorithms)
can be found in \url{https://github.com/cvxgrp/nonexp_global_aa1}. The random
seeds are all set to $456$, \ie, the one used for producing the plots in
this paper for reproducibility. Code in other languages, including
Python and Julia, is being developed and will soon be posted.

\subsubsection{Implementation details}\label{details}
Before we move on to the numerical results, we first describe in more
details the implementation tricks for better efficiency.
\paragraph{Matrix-free updates.} In line 11 of Algorithm
\ref{alg:AA-I-safe}, instead of computing and storing $H_k$, we actually
first compute
$d_k=H_{k-1}g_k+\frac{(s_{k-1}-H_{k-1}\tilde{y}_{k-1})\hat{s}_{k-1}^T
H_{k-1}g_k}{\hat{s}_{k-1}^TH_{k-1}\tilde{y}_{k-1}}$,
and then update $\tilde{x}^{k+1}=x^k-d_k$. This leads to a much more
efficient matrix-free implementation. Another small trick we use is to
normalize the $\hat{s}_k$ vectors, store them, and keep them transposed
to save the computational overhead.

\paragraph{Termination criteria.} In all our experiments, we simply
terminate the experiment when either the iteration number reaches a
pre-specified maximum $K_{\max}$, or the relative residual norm
$\|g_k\|_2/\|g_0\|_2$ is smaller than some tolerance \textit{tol}.
Accordingly, the residual norms in the plots are all rescaled by
dividing $\|g_0\|_2$, so all of them starts with $1$ in iteration $0$.
The initial residual norm $\|g_0\|_2$ is shown in the title as
\texttt{res0}. Unless otherwise specified (\eg, ISTA for elastic net
regression), we always choose $K_{\max}=1000$ and \textit{tol}
$=10^{-5}$. We remark that although not shown in the plots, the residual
norms actually continue to decrease as iterations proceed in all the
examples below.

\paragraph{Choice of hyper-parameters.} Throughout the experiments, we
use a single set of hyper-parameters to show the robustness of our
algorithm (Algorithm \ref{alg:AA-I-safe}). We choose $\theta=0.01$,
$\tau=0.001$, $D=10^6$, $\epsilon=10^{-6}$, and memory $m=5$ (apart from
the memory effect experiment on VI, in which we vary the memory sizes to
see the performance change against memories). We choose a small
averaging weight $\alpha=0.1$ to make better use of the fact that most
vanilla algorithms already correspond to averaged $f$.

\paragraph{Additional rules-of-thumb.} In our algorithm, in general by
setting a relatively small $D$ and large $\epsilon$, one enforces the
modified algorithm to use safe-guarding steps more often, making it
closer to the original AA-I-m. This may be wanted in case the problems
are relatively easy and safe-guard checking is a slight waste of time.
The Powell regularization parameter should not be set too large, as it
will empirically break down the acceleration effect. For the re-start
checking parameter $\tau$, a choice ranging from $0.001$ to $0.1$ are
all found reasonable in our experiments. A large $\tau$ will force the
algorithm to re-start quite often, making it close to choosing the
memory size $m=1$. A memory size ranging from 2 to 50 are all found to
be reasonable choices, with larger memories leading to more stable
acceleration with slightly larger per-iteration costs. However, when the
memory size becomes too large, especially when it is close to the
variable dimension, our algorithm (as well as the original AA-I-m) will
again become unstable.

In addition, AA algorithms are in general relatively more sensitive to
scaling than the vanilla algorithms. For most of the random instances we
show below, the scaling is unnecessary as expected. However, even for
the synthetic but structural UCI Madelon dataset used in the regularized
logistic regression example below, the AA algorithms will fail if we do
not divide $m$ in the objective. Similar issues occur when we come to
the heavy ball example below with an ill-conditioned linear system.
Hence in practice, the problem data need to be scaled. For examples of
pre-scaling and pre-conditioning, see \cite{SCS}.

\subsubsection{Problem instances}
We consider the following specific problem instances for the algorithms 
listed in Section \ref{prob_alg}, ranging from statistics, control to game 
theory and so on. For each plot, AA-I-m is labeled as \emph{aa1}, AA-I-S-m 
is labeled as \emph{aa1-safe}, and the original (vanilla) algorithm is labeled 
as \emph{origin}. The residual norms are computed in the $\ell_2$-norm, \ie, 
the vertical axis in the plots is $\|g_k\|_2$. In the title of the 
``residual norm versus time'' figures, ``time ratio'' indicates the average 
time per iteration of the specified algorithm divided by that of the vanilla 
algorithm. The average is computed for the single run shown in the figure among
 all the iterations up to $K_{\max}$.

\paragraph{GD: Regularized logistic regression.} We consider the
following regularized logistic regression (Reg-Log) problem:
\BEQ\label{logreg}
\begin{array}{ll}
\mbox{minimize}& \dfrac{1}{m}\sum_{i=1}^m\log(1+y_i\theta^Tx_i)+
\dfrac{\lambda}{2}\|\theta\|_2^2,
\end{array}
\EEQ
where $y_i=\pm1$ are the labels, and $x_i\in\reals^n$ are the features 
and attributes. The minimization is over $\theta\in \reals^n$. We use 
UCI Madelon dataset, which contains $2000$ samples (\ie, $m=2000$) and 
$500$ features (\ie, $n=500$) . We choose $\lambda=0.01$, and initialize 
$x^0$ with independent normally distributed entries, \ie, using \texttt{randn.m}.
 To avoid numerical overflow, we normalize $x^0$ to have a $\ell_2$-norm equal 
 to $0.001$. The step size $\alpha$ is chosen as $2/(L+\lambda)$, 
 where $L=\|X\|_2^2/4m$ is an upper bound on the largest eigenvalues 
 of the objective Hessians \cite{RNA}, and $X=[x_1,\dots,x_m]$. 
 The results are shown in Figure \ref{gd_logreg}. 
\begin{figure}[h]
\centering
\includegraphics[trim={1cm 6cm 1cm 6cm},clip, width=7.5cm]{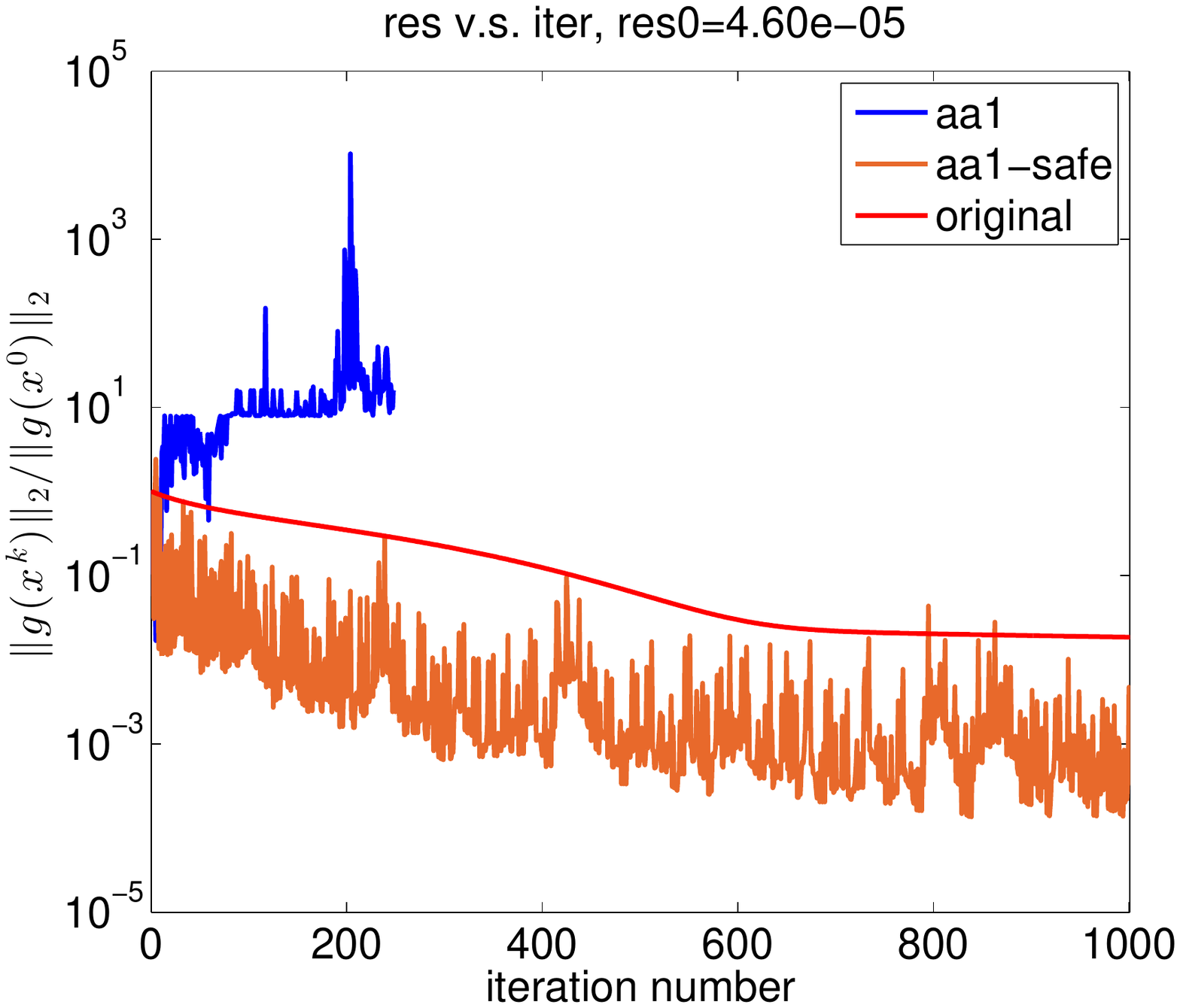}
\includegraphics[trim={1cm 6cm 1cm 6cm},clip,width=7.5cm]{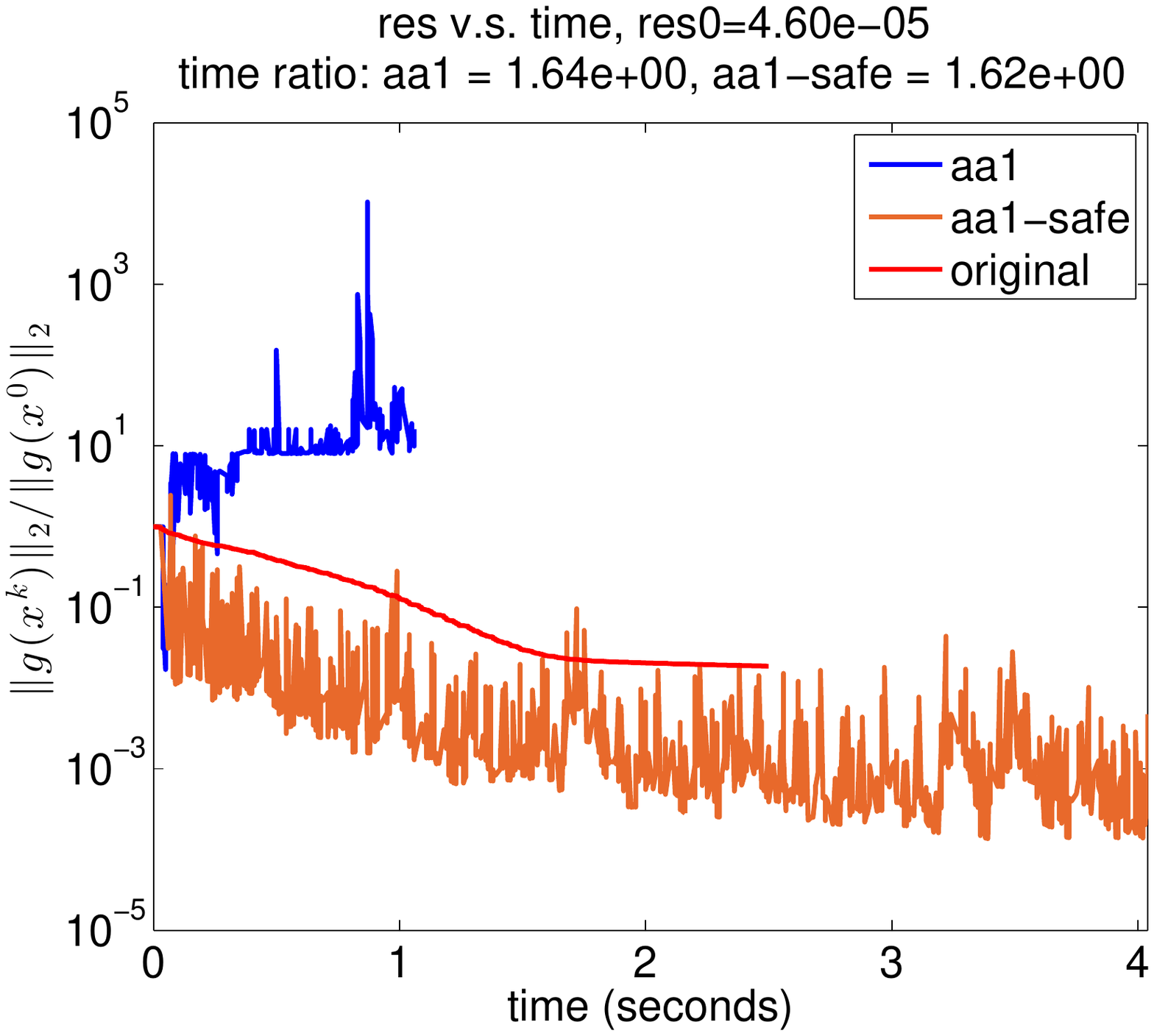}
\caption{GD: Reg-Log. Left: residual norm versus iteration. 
Right: residual norm versus time (seconds).}
\label{gd_logreg}
\end{figure}

In this example, the original AA-I-m completely fails, and our modified
AA-I-S-m obtains a 100x-1000x improvement over the original gradient
descent algorithm in terms of the residual norms. Interestingly,
although the residual norms of AA-I-S-m oscillate above the vanilla
algorithm at several points, the improvement in terms of objective
values is much more stable, as shown in Figure \ref{gd_logreg_obj}. Here
the vertical axis is the objective value minus the smallest objective
value found among all three algorithms. The objective value of the
original AA-I-m is mostly not even plotted as plugging its corresponding
iterates into (\ref{logreg}) yields $\infty$.
\begin{figure}[h]
\centering
\includegraphics[trim={1cm 6cm 1cm 6cm},clip, width=7.5cm]{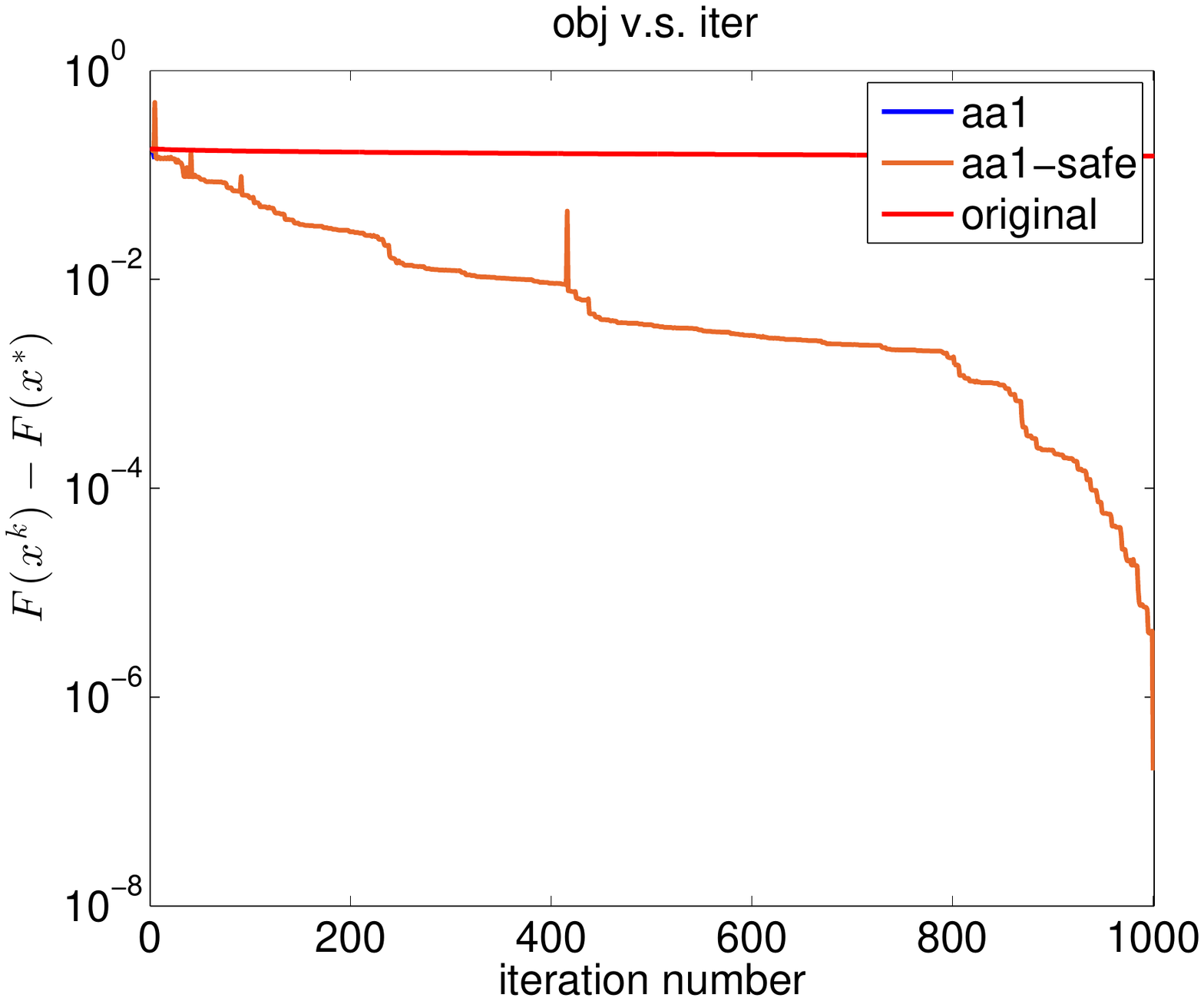}
\includegraphics[trim={1cm 6cm 1cm 6cm},clip,width=7.5cm]{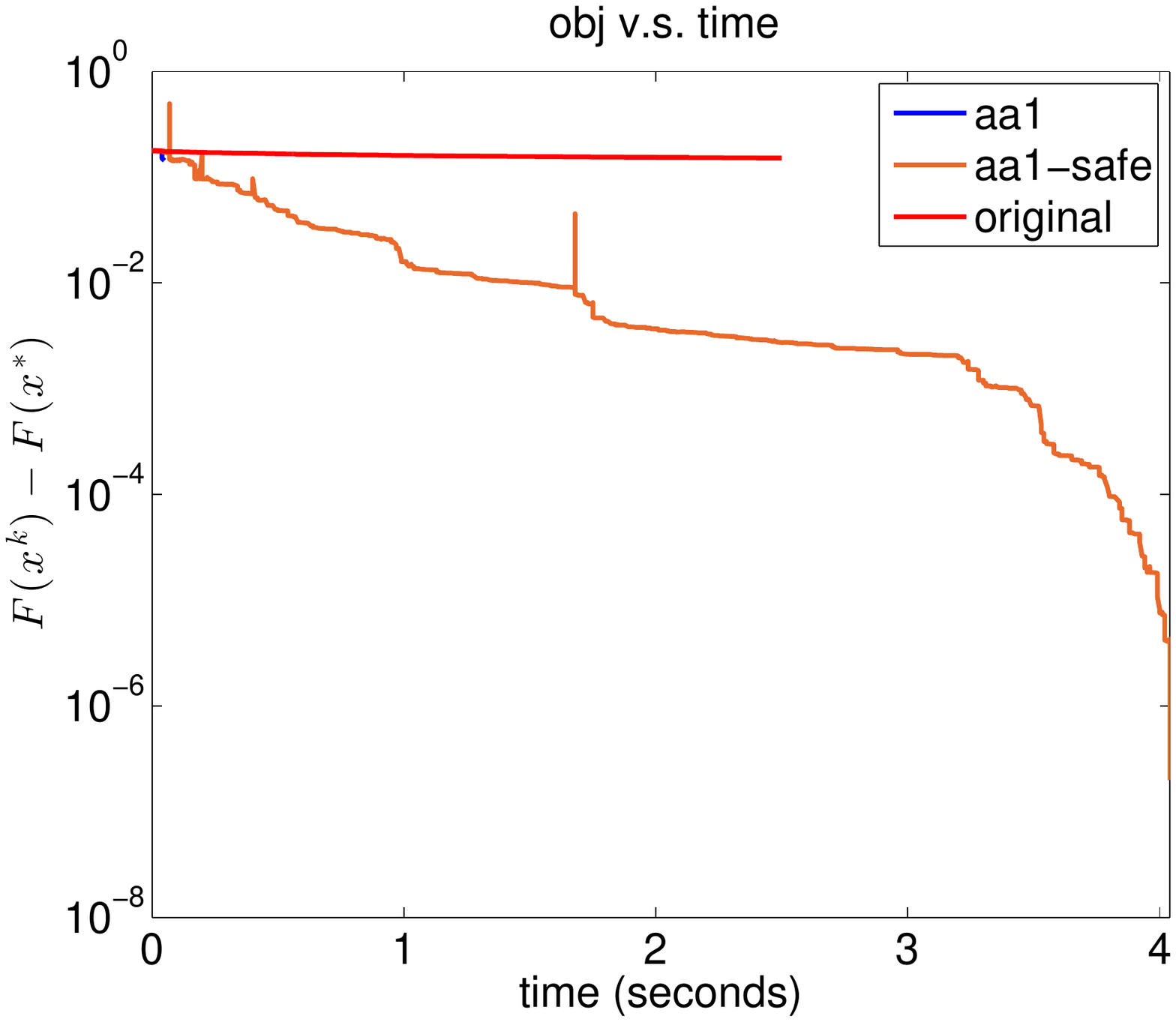}
\caption{GD: Reg-Log. Left: objective value versus iteration. 
Right: objective value versus time (seconds).}
\label{gd_logreg_obj}
\end{figure}

\paragraph{HB: Linear system.}  As described in \S \ref{contract_norm},
we consider the simple problem of solving the nonsingular linear system
$Ax+b=0$, where $A\in \reals^{n\times n}$ is positive definite and $b$
is a constant vector. We generate $A=B^TB+0.005I$, where $I$ is the
$n$-by-$n$ identity matrix, and $B\in \reals^{\lfloor n/2\rfloor \times
n}$ is generated by \texttt{randn.m}. The vector $b$ is also generated
with \texttt{randn.m}. We choose $n=1000$ in our experiments. To compute
the step sizes $\alpha$ and $\beta$, we choose $\mu$ as $0.005$ and
$L=\|A\|_F$, which avoids the expensive eigenvalue decomposition.

Notice that here we deliberately choose $B$ to be a ``fat'' matrix so
that $A$ is ill-conditioned. In our example, the condition number is
$\textbf{cond}(A)\approx 6.4629\times 10^5$. And with $\kappa=L/\mu\geq
\textbf{cond}(A)$, the convergence of the vanilla HB algorithm will be
rather slow, as can be seen from the theoretical convergence rate
$(\sqrt{\kappa}-1)/(\sqrt{\kappa}+1)$ (which is super close to $1$).

To remedy this, we adopt a simple diagonal scaling strategy that scales
$A$ and $b$ by the row and column absolute value sums of
$A=(a_{ij})_{n\times n}$. More explicitly, we compute $\hat{A}=D^{-1}A$
and $\tilde{b}=D^{-1}b$, where
\[
D=\textbf{diag}\left(\sum_{j=1}^n|a_{1j}|,\dots,\sum_{j=1}^n|a_{nj}|\right).
\]  We then further right diagonalize $\hat{A}=(\hat{a}_{ij})_{n\times
n}$ as $\tilde{A}=\hat{A}E^{-1}$, where
\[
E=\textbf{diag}\left(\sum_{i=1}^n|\hat{a}_{i1}|,\dots,\sum_{i=1}^n|\hat{a}_{in}|\right).
\]  Essentially, this is exactly performing one step of Sinkhorn-Knopp
algorithm to the absolute value matrix $|A|=(|a_{ij}|)_{n\times n}$ of
$A$ for matrix equilibration \cite{Knight-SK}. Obviously, we see that
$\tilde{x}$ is the solution to $\tilde{A}\tilde{x}+\tilde{b}=0$ if and
only if $x=E^{-1}\tilde{x}$ is the solution to $Ax+b=0$. The results are
shown in Figure \ref{hb_qp_precond}, from which we again see the
anticipated improvement.
\begin{figure}[h]
\centering
\includegraphics[trim={1cm 6cm 1cm 6cm},clip, width=7.5cm]
{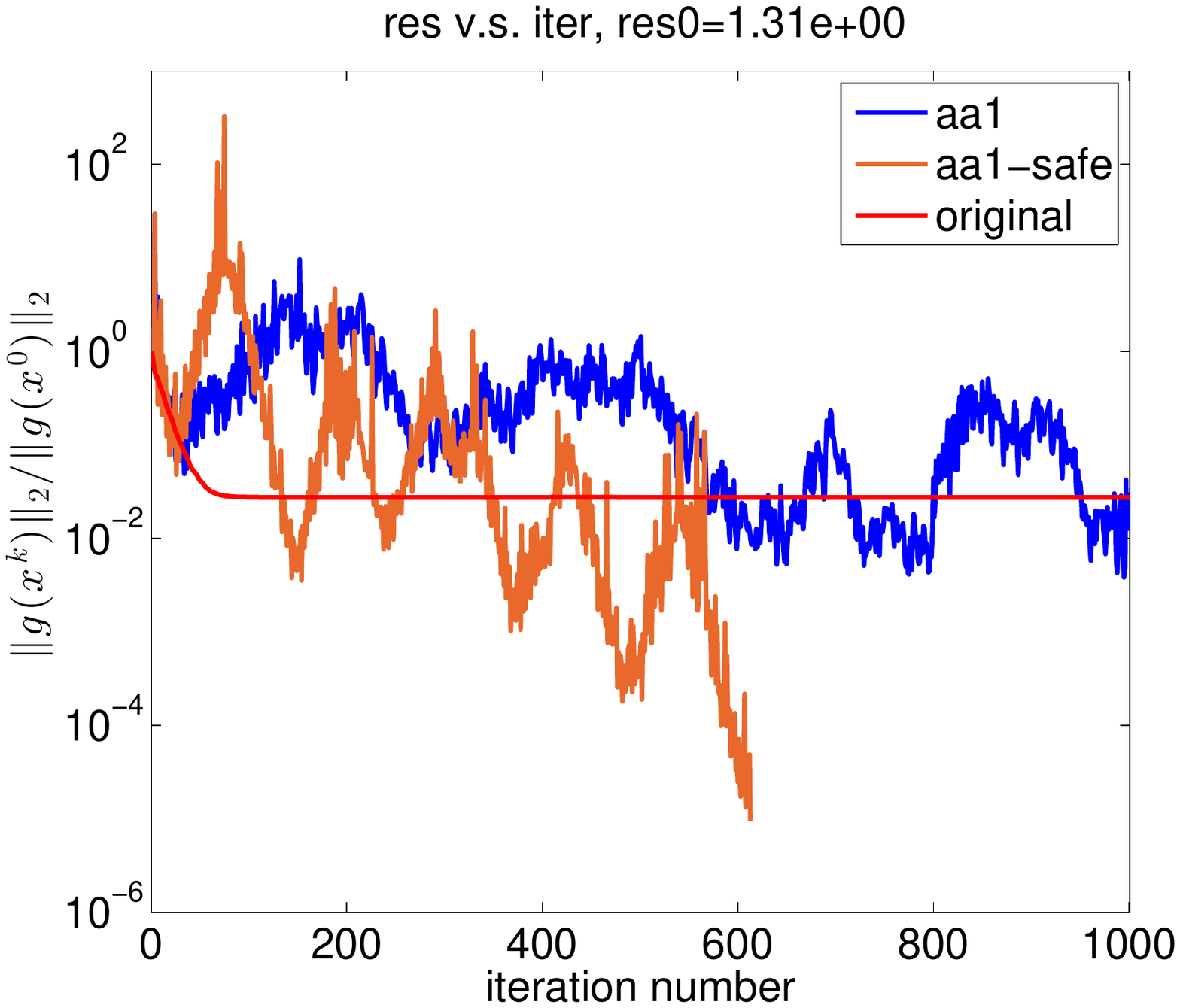}
\includegraphics[trim={1cm 6cm 1cm 6cm},clip,width=7.5cm]
{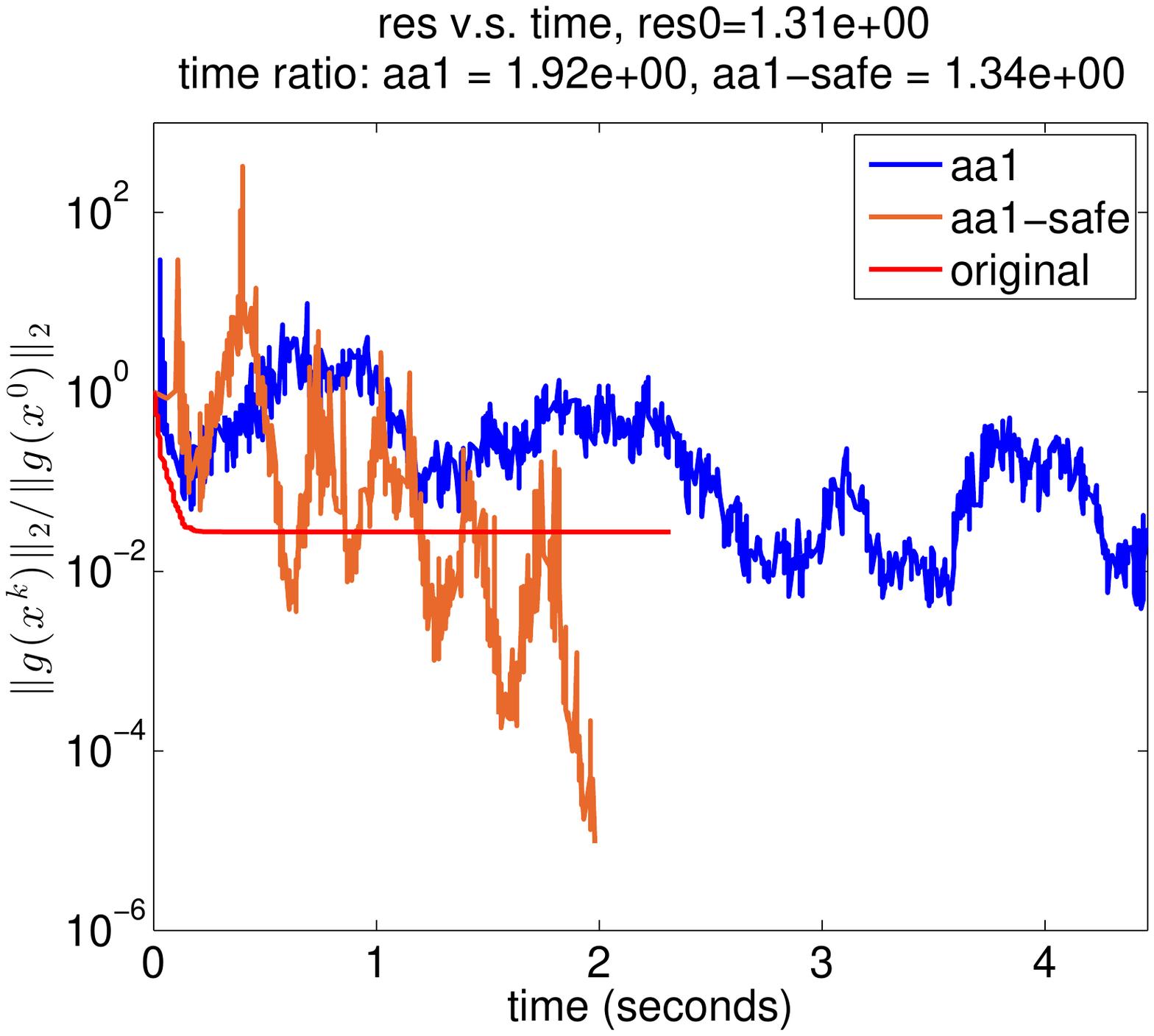}\\
\caption{HB: linear system. Left: residual norm 
versus iteration. Right: residual norm versus time (seconds).}
\label{hb_qp_precond}
\end{figure}

\paragraph{AP: Linear program.}  We consider solving the following
linear program (LP),
\BEQ\label{LP_AP}
\begin{array}{ll}
\mbox{minimize} & c^Tx\\
\mbox{subject to} & Ax=b, \quad x\in\mathcal{K},
\end{array}
\EEQ where $A\in\reals^{m\times n}$, $b\in\reals^m$, $c\in\reals^n$, and
$\mathcal{K}$ is a nonempty, closed and convex cone.  Notice that here
we deliberate choose a different (dual) formulation of (\ref{co}) to
show the flexibility of our algorithm, which can be easily mounted on
top of vanilla algorithms.

As in SCS, (\ref{LP_AP}) can be similarly formulated as the self-dual
homogeneous embedding (SDHE) system (\ref{sdhe}), but now with
\[ 
Q=\left[\begin{array}{ccc} 0 & -A^T & c\\ A & 0 & -b\\ -c^T &b^T & 0
\end{array}\right],\quad \mathcal{C}=\mathcal{K}\times\reals^m\times \reals_+.
\]

Under the notations of AP, solving the SDHE system above reduces to
finding a point in the intersection of $C$ and $D$, with
$C=\{(u,v)\;|\;Qu=v\}$ and $D=\mathcal{C}\times\mathcal{C}^*$, which can
then be solved by AP.

We generate a set of random data ensuring primal and dual feasibility of
the original problem (\ref{LP_AP}), following \cite{SCS}. More
specifically, we first generate $A$ as a sparse random matrix with
sparsity $0.1$ using \texttt{sprandn.m}. We then generate $z^\star$ with
\texttt{randn.m}, and take $x^\star=\max(z^\star,0)$,
$s^\star=\max(-z^\star,0)$ where the maximum is taken component-wisely.
We then also generate $y^\star$ with \texttt{randn.m}, and take
$b=Ax^\star$, $c=A^Ty^\star+s^\star$. In our experiments, we set $m=500$
and $n=1000$, and $x^0$ is simply initialized using \texttt{randn.m} and
then normalized to have a unit $\ell_2$-norm.

In addition, as in the HB example above and SCS \cite{SCS}, we perform
diagonal scaling on the problem data. More explicitly, we compute
$\tilde{A}=D^{-1}AE^{-1}$ exactly as in the HB example, and accordingly
scale $b$ to be $\tilde{b}=D^{-1}b$ and $c$ to be $\tilde{c}=E^{-1}c$.
Again, we see that $\tilde{x}$ is a solution to (\ref{LP_AP}) with $A$,
$b$, $c$ replaced with the scaled problem data $\tilde{A}$, $\tilde{b}$,
$\tilde{c}$, if and only if $x=E^{-1}\tilde{x}$ is a solution to the
original problem.

The results are summarized in Figure \ref{ap-lp}.
\begin{figure}[h]
\centering
\includegraphics[trim={1cm 6cm 1cm 6cm},clip, width=7.5cm]{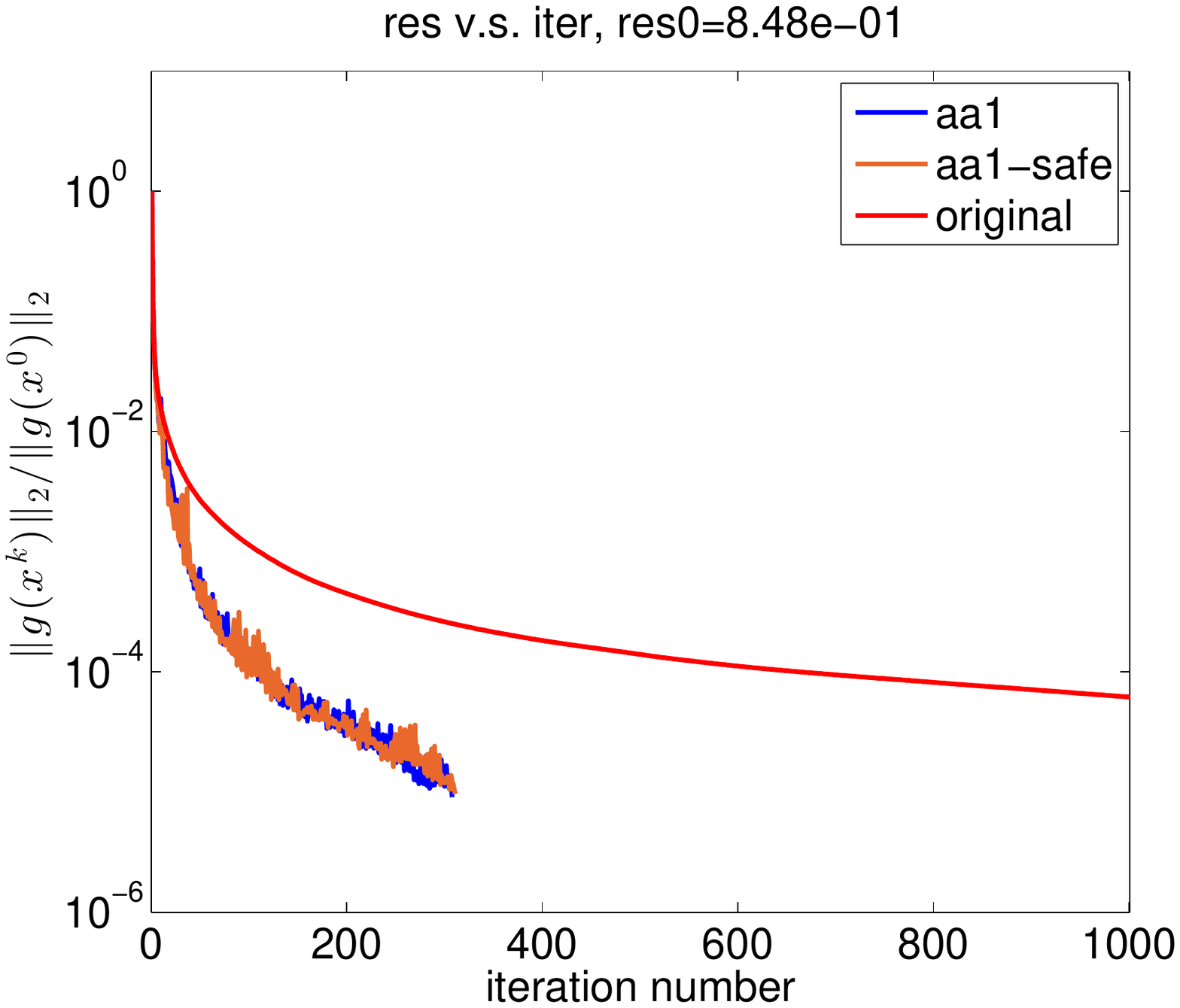}
\includegraphics[trim={1cm 6cm 1cm 6cm},clip,width=7.5cm]{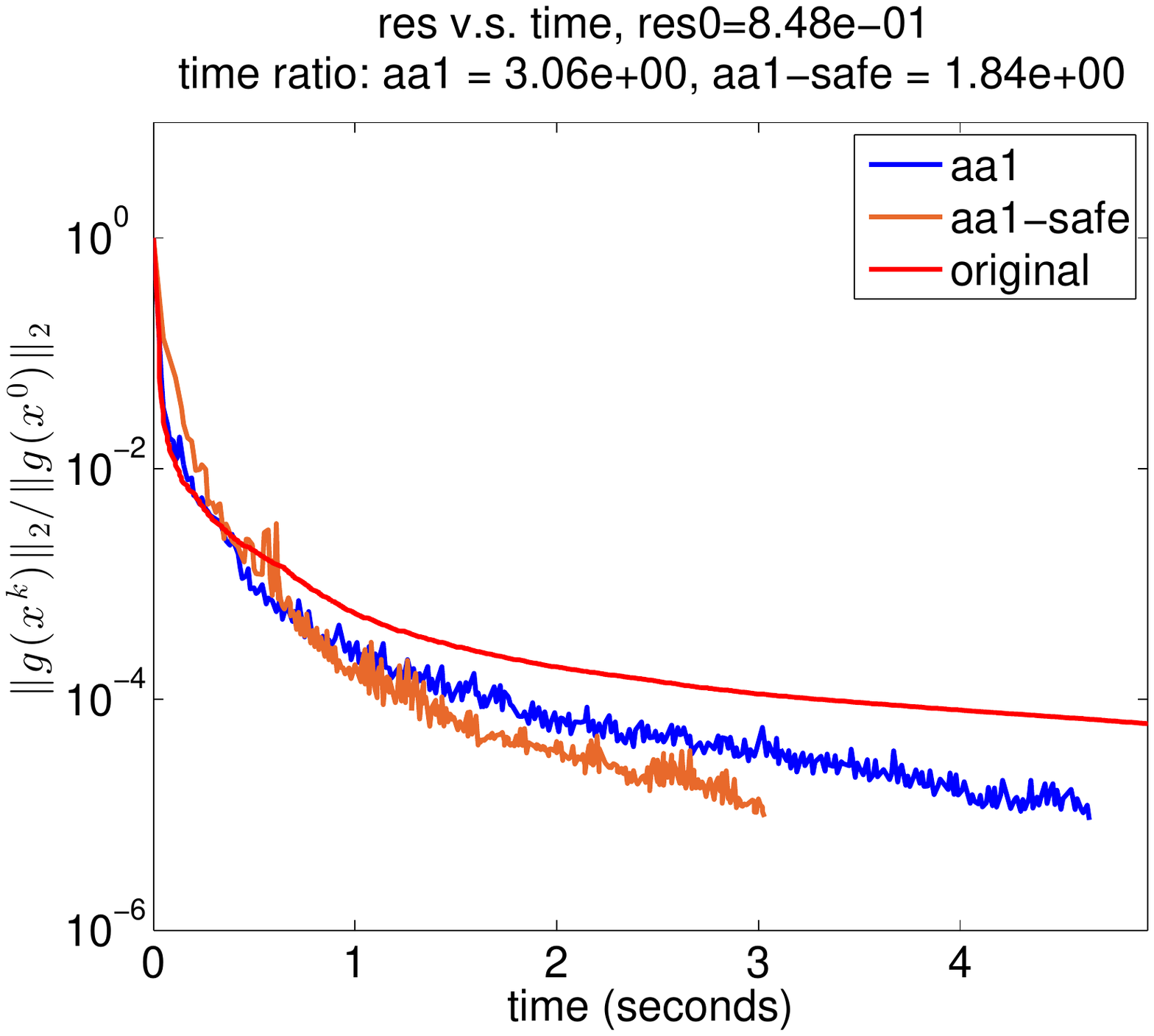}\\
\caption{AP: LP as SDHE. Left: residual norm versus iteration. 
Right: residual norm versus time (seconds).}
\label{ap-lp}
\end{figure}
We can see that our algorithm AA-I-S-m compares favorably with the
original AA-I-m in terms of iteration numbers, and both AA-I-S-m and
AA-I-m outperform the vanilla AP algorithm. In terms of running time, we
can see a further slight improvement over the original AA-I-m.

\paragraph{PGD: Non-negative least squares and convex-concave matrix
game.} We consider the following non-negative least squares (NNLS)
problem:
\BEQ
\begin{array}{ll}
\mbox{minimize}& \frac{1}{2}\|Ax-b\|_2^2\\
\mbox{subject to}& x\geq 0,
\end{array}
\EEQ where $A\in\reals^{m\times n}$ and $b\in\reals^m$.

Such a problem arises ubiquitously in various applications, especially
when $x$ has certain physical interpretation \cite{NNLS}. We consider
the more challenging high dimensional case, \ie, $m<n$ \cite{NNLSHD}.
The gradient of the objective function can be simply evaluated as
$A^TAx-A^Tb$, and hence the PGD algorithm can be efficiently
implemented.

We generate both $A$ and $b$ using \texttt{randn.m}, with $m=500$ and
$n=1000$. We again initialize $x^0$ using \texttt{randn.m} and then
normalize it to have a unit $\ell_2$-norm. The step size $\alpha$ is set to
$1.8/\|A^TA\|_2$. The results are summarized in Figure \ref{pgd-rand}.
\begin{figure}[h]
\centering
\includegraphics[trim={1cm 6cm 1cm 6cm},clip, width=7.5cm]{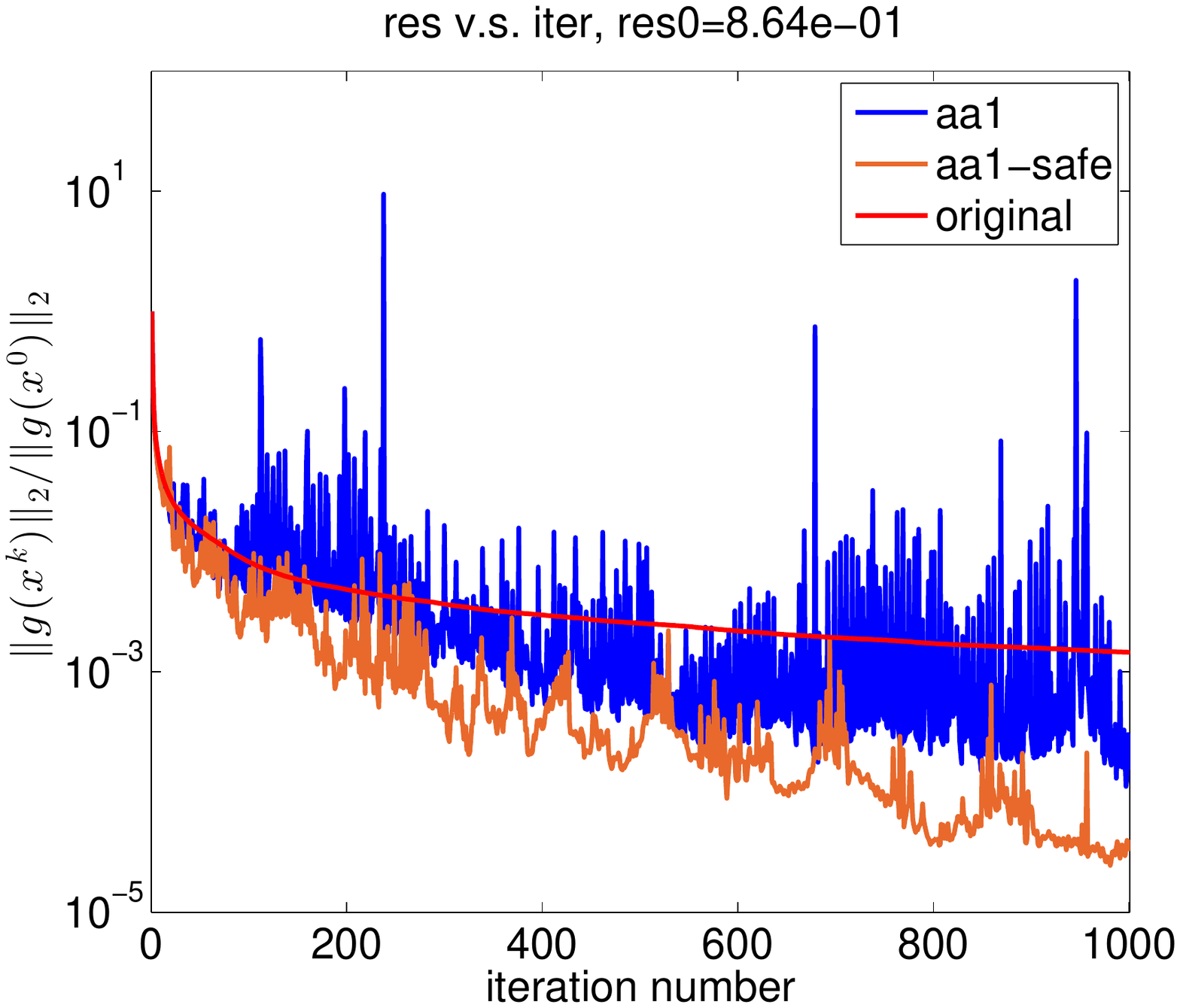}
\includegraphics[trim={1cm 6cm 1cm 6cm},clip,width=7.5cm]{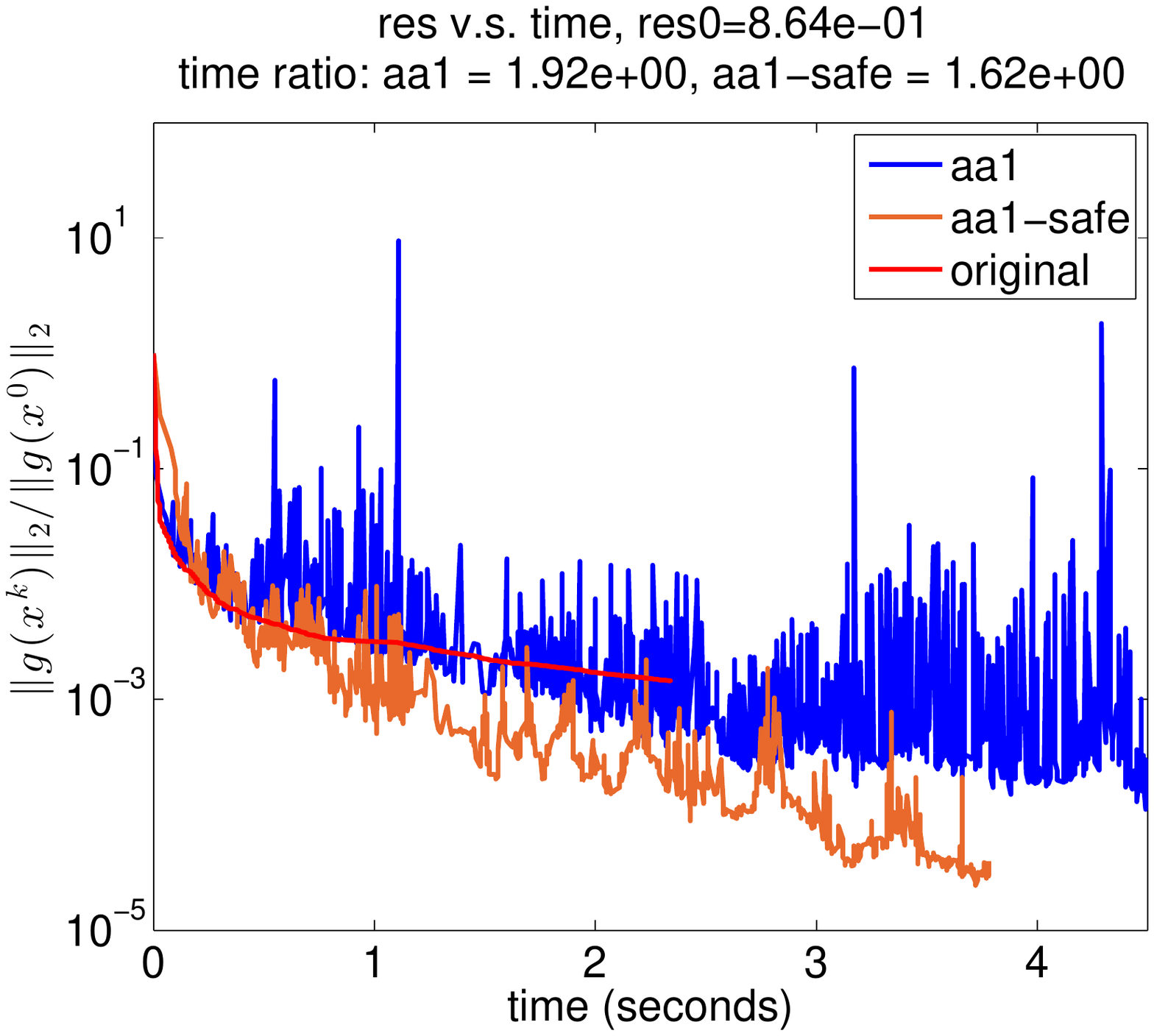}\\
\caption{PGD: NNLS. Left: residual norm versus iteration. 
Right: residual norm versus time (seconds).}
\label{pgd-rand}
\end{figure}

We also consider a more specialized and structured problem:
convex-concave matrix game (CCMG), which can be reformulated into a form
solvable by PGD, as we show below.

A CCMG can be formulated as the following LP \cite{CVXBOOK}:
\BEQ
\begin{array}{ll}
\mbox{minimize}&t\\
\mbox{subject to}&u\geq 0,\quad {\bf1} ^Tu=1,\quad P^Tu\leq t{\bf 1},
\end{array}
\EEQ where $t\in\reals$, $u\in\reals^m$ are variables, and
$P\in\reals^{m\times n}$ is the pay-off matrix. Of course we can again
reformulate it as an SDHE system and solve it by AP as above. But here
we instead consider a different reformulation amenable to PGD.

To do so, we first notice that the above LP is always feasible. This can
be seen by choosing $u$ to be an arbitrary probability vector, and
setting $t=\|P^Tu\|_{\infty}$. Hence the above LP can be further
transformed into
\BEQ
\begin{array}{ll}
\mbox{minimize}&t+\frac{1}{2}\|P^Tu+s-t{\bf1}\|_2^2\\
\mbox{subject to}&u\geq 0,\quad 1^Tu=1,\quad s\geq 0,
\end{array}
\EEQ where we introduce an additional (slack) variable $s\in\reals^n$.
Using the efficient projection algorithm onto the probability simplex
set \cite{SimplexProj, CVXBOOK}, the above problem can be solved
efficiently by PGD.

We generate $P$ using \texttt{randn.m} with $m=500$ and $n=1500$. Again,
$x^0$ is initialized using \texttt{randn.m} and then normalized to have
a unit $\ell_2$-norm. The step size $\alpha$ is set to
$1.8/\|\tilde{A}^T\tilde{A}\|_2$, where $\tilde{A}=[P^T, I, e]$, in
which $I$ is the $n$-by-$n$ identity matrix and $e\in\reals^n$ is an
all-one vector.  The results are summarized in Figure \ref{pgd-ccm}.
\begin{figure}[h]
\centering
\includegraphics[trim={1cm 6cm 1cm 6cm},clip, width=7.5cm]{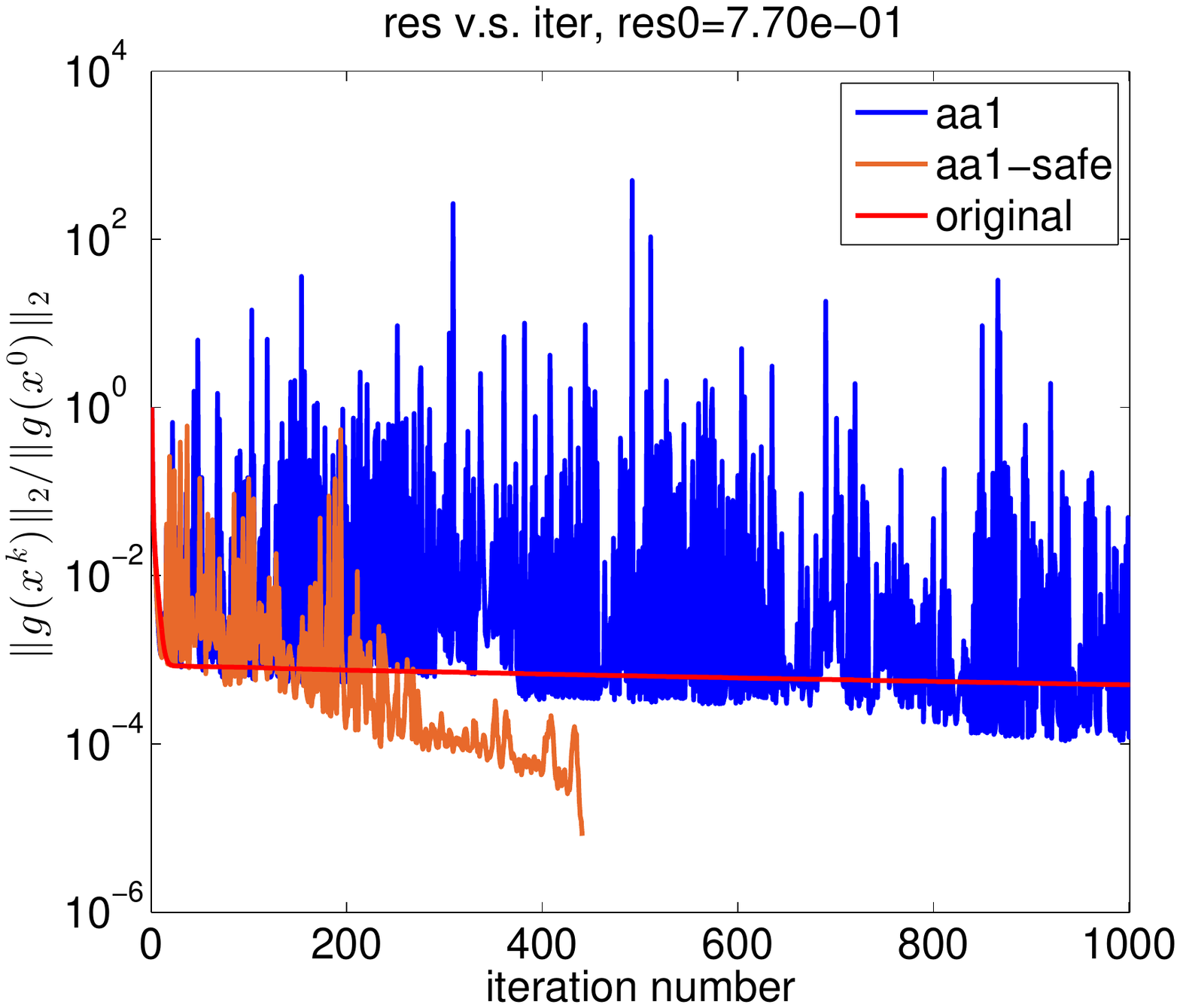}
\includegraphics[trim={1cm 6cm 1cm 6cm},clip,width=7.5cm]{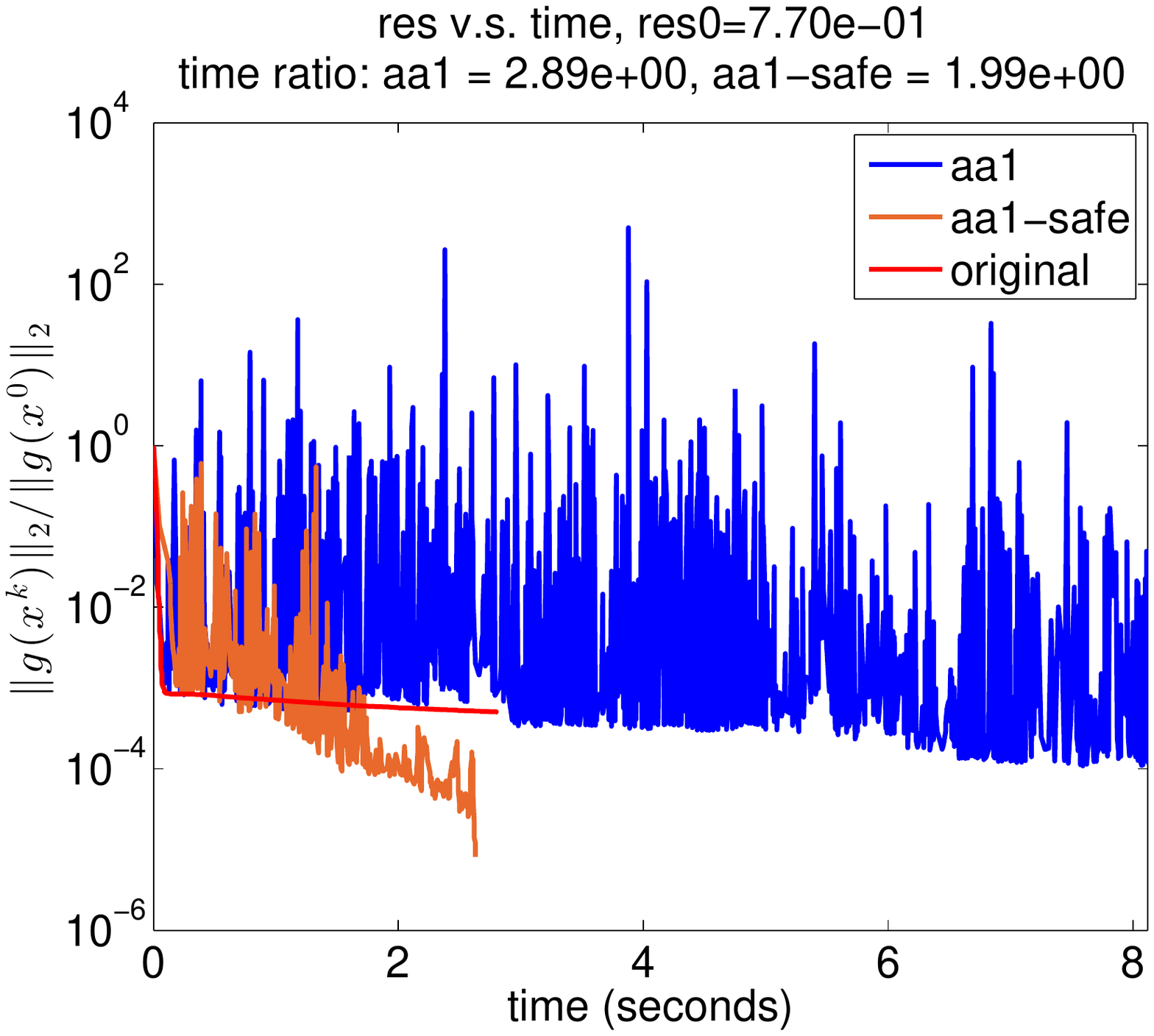}\\
\caption{PGD: CCMG. Left: residual norm versus iteration. 
Right: residual norm versus time (seconds).}
\label{pgd-ccm}
\end{figure}

\paragraph{ISTA: Elastic net regression.} We consider the following
elastic net regression (ENR) problem \cite{ENR}:
\BEQ
\begin{array}{ll}
\mbox{minimize}&\frac{1}{2}\|Ax-b\|_2^2 + \mu\left(\frac{1-\beta}{2}
\|x\|_2^2+\beta\|x\|_1\right),\\
\end{array}
\EEQ where $A\in\reals^{m\times n}$, $b\in\reals^m$. In our experiments,
we take $\beta=1/2$ and $\mu=0.001\mu_{\max}$, where
$\mu_{\max}=\|A^Tb\|_{\infty}$ is the smallest value under which the ENR
problem admits only the zero solution \cite{SCS}. ENR is proposed as a
hybrid of Lasso and ridge regression, and has been widely used in
practice, especially when one seeks both sparsity and overfitting
prevention.

Applying ISTA to ENR, we obtain the following iteration scheme:
\[
x^{k+1}=S_{\alpha\mu/2}\left(x^k-\alpha\left(A^T(Ax-b)+
\frac{\mu}{2}x\right)\right),
\] in which we choose $\alpha=1.8/L$, with
$L=\lambda_{\max}(A^TA)+\mu/2$.

We again consider a harder high dimensional case, where $m=500$ and
$n=1000$. The data is generated similar to the Lasso example in
\cite{SCS}. More specifically, we generate $A$ using \texttt{randn.m},
and then generate $\hat{x}\in\reals^n$ using \texttt{sprandn.m} with
sparsity $0.1$. We then generate $b$ as $b=A\hat{x}+0.1w$, where $w$ is
generated using \texttt{randn.m}. The initial point $x^0$ is again
generated by \texttt{randn.m} and normalized to have a unit $\ell_2$-norm.
The step size is chosen as $\alpha=1.8/L$, where $L=\|A^TA\|_2+\mu/2$.
The results are shown in Figure \ref{ista-enr}. Here we set the
tolerance \textit{tol} to $10^{-8}$ to better exemplify the performance
improvement of our algorithm in a relative long run.
\begin{figure}[h]
\centering
\includegraphics[trim={1cm 6cm 1cm 6cm},clip, width=7.5cm]{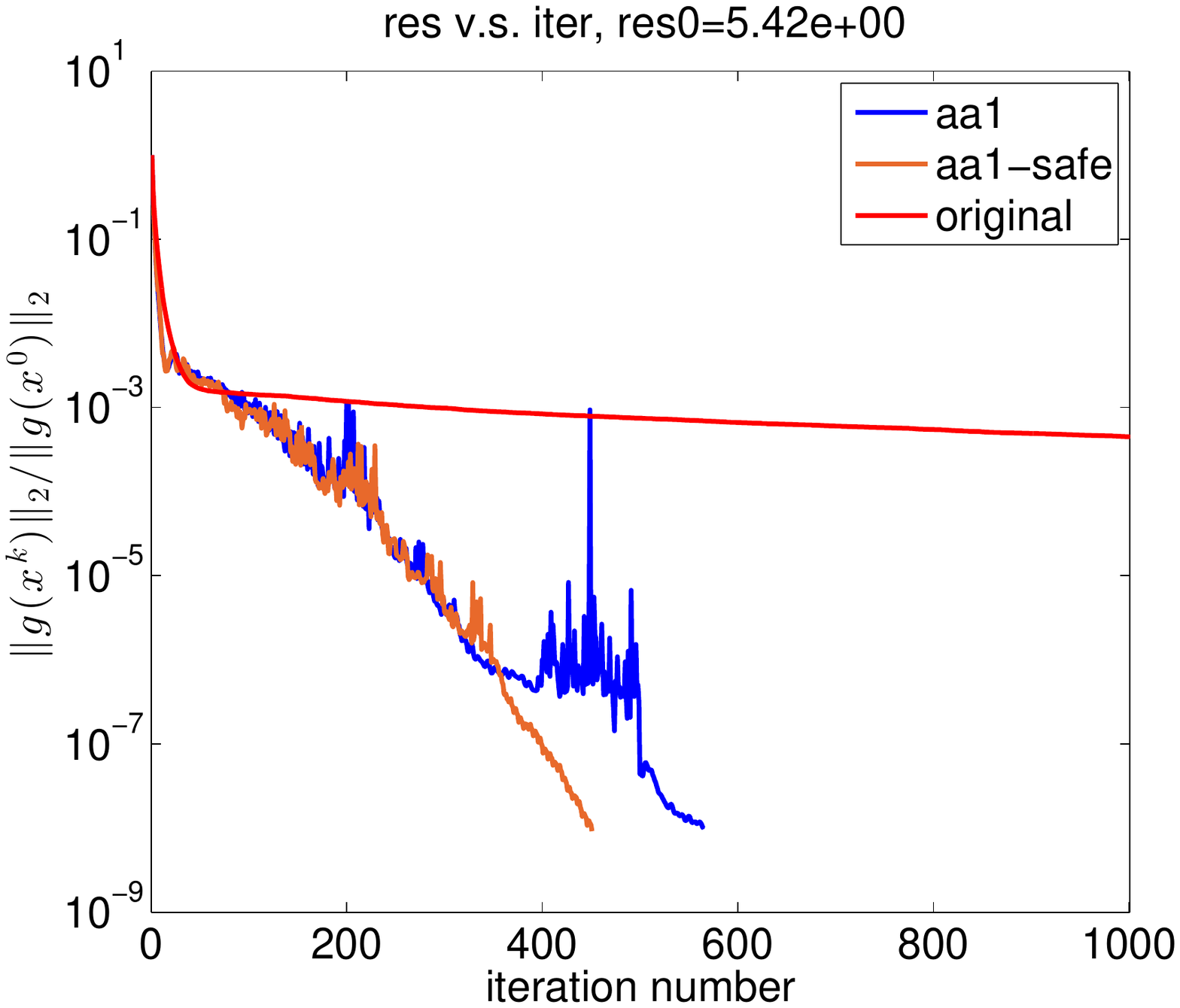}
\includegraphics[trim={1cm 6cm 1cm 6cm},clip,width=7.5cm]{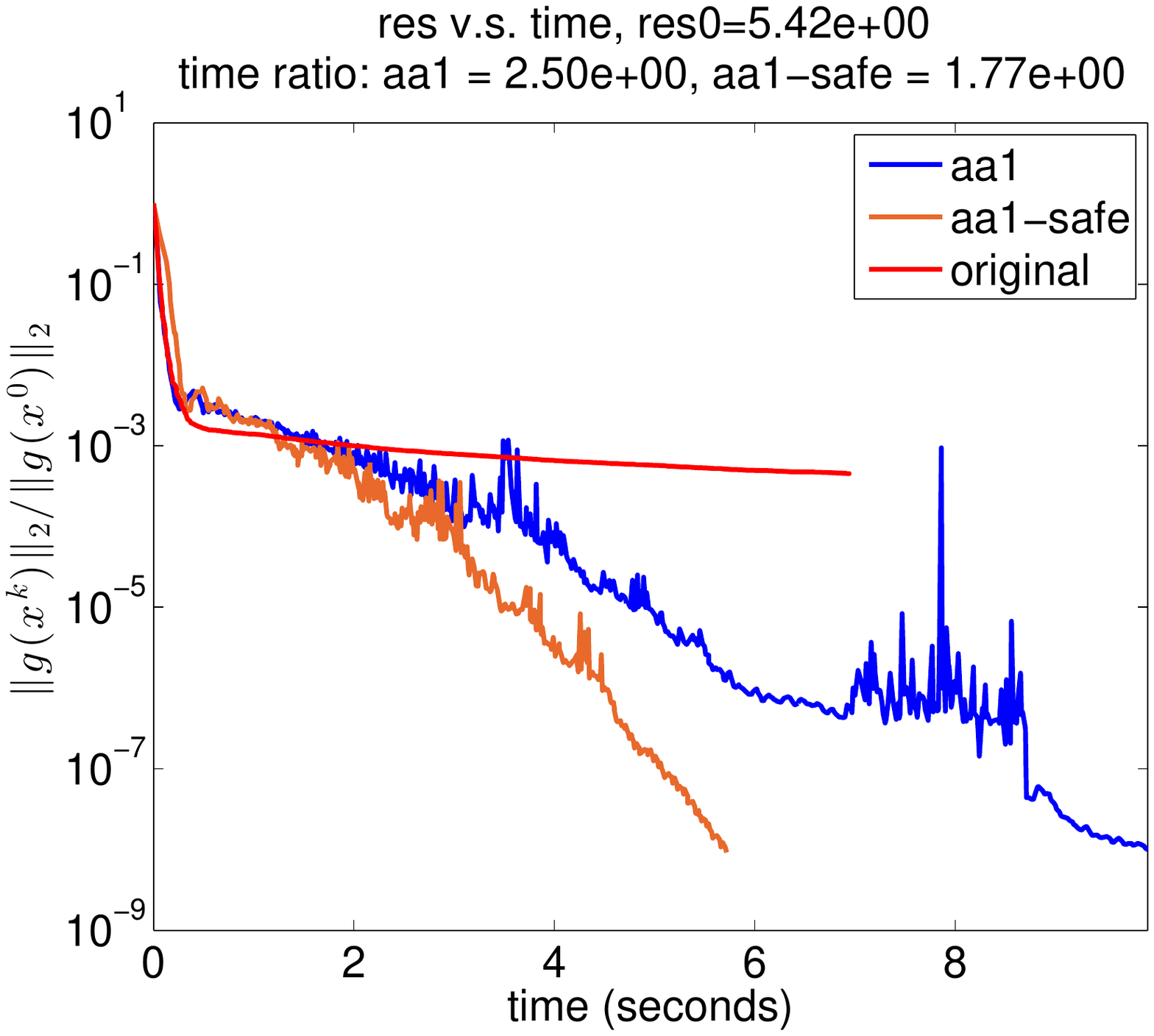}\\
\caption{ISTA: ENR. Left: residual norm versus iteration. 
Right: residual norm versus time (seconds).}
\label{ista-enr}
\end{figure}

\paragraph{CO: Facility location.} Consider the following facility
location problem \cite{FacLoc}:
\BEQ
\begin{array}{ll}
\mbox{minimize}&\sum_{i=1}^m\|x-c_i\|_2,\\
\end{array}
\EEQ where $c_i\in\reals^n$, $i=1,\dots,m$ are locations of the clients,
and the goal is to find a facility location that minimizes the total
distance to all the clients.

Applying CO to this problem with $\alpha=1$, we obtain that
(\cite{Proximal})
\[
\begin{array}{ll}
x_i^{k+1}&=\textbf{prox}_{\|\cdot\|_2}(z_i^k+c_i)-c_i\\
z_i^{k+1}&=z_i^k+2\bar{x}^{k+1}-x_i^{k+1}-\bar{z}^k, \quad i=1,\dots,m,
\end{array}
\] where $\textbf{prox}_{\|\cdot\|_2}(v)=(1-1/\|v\|_2)_+v$.

Notice that all the updates can be parallelized. In particular, in the
Matlab implementation no for loops is needed within one iteration, which
is important to the numerical efficiency.  We generate $c_i$ using
\texttt{sprandn.m}, with $m=500$ and $n=300$ and sparsity $0.01$. The
results are summarized in Figure \ref{co-facloc}. Notice that here we
again set the tolerance \textit{tol} to $10^{-8}$ to better demonstrate
the improvement of our algorithm, and we truncate the maximum iteration
number to $K_{\max}=500$ for better visualization.
\begin{figure}[h]
\centering
\includegraphics[trim={1cm 6cm 1cm 6cm},clip, width=7.5cm]{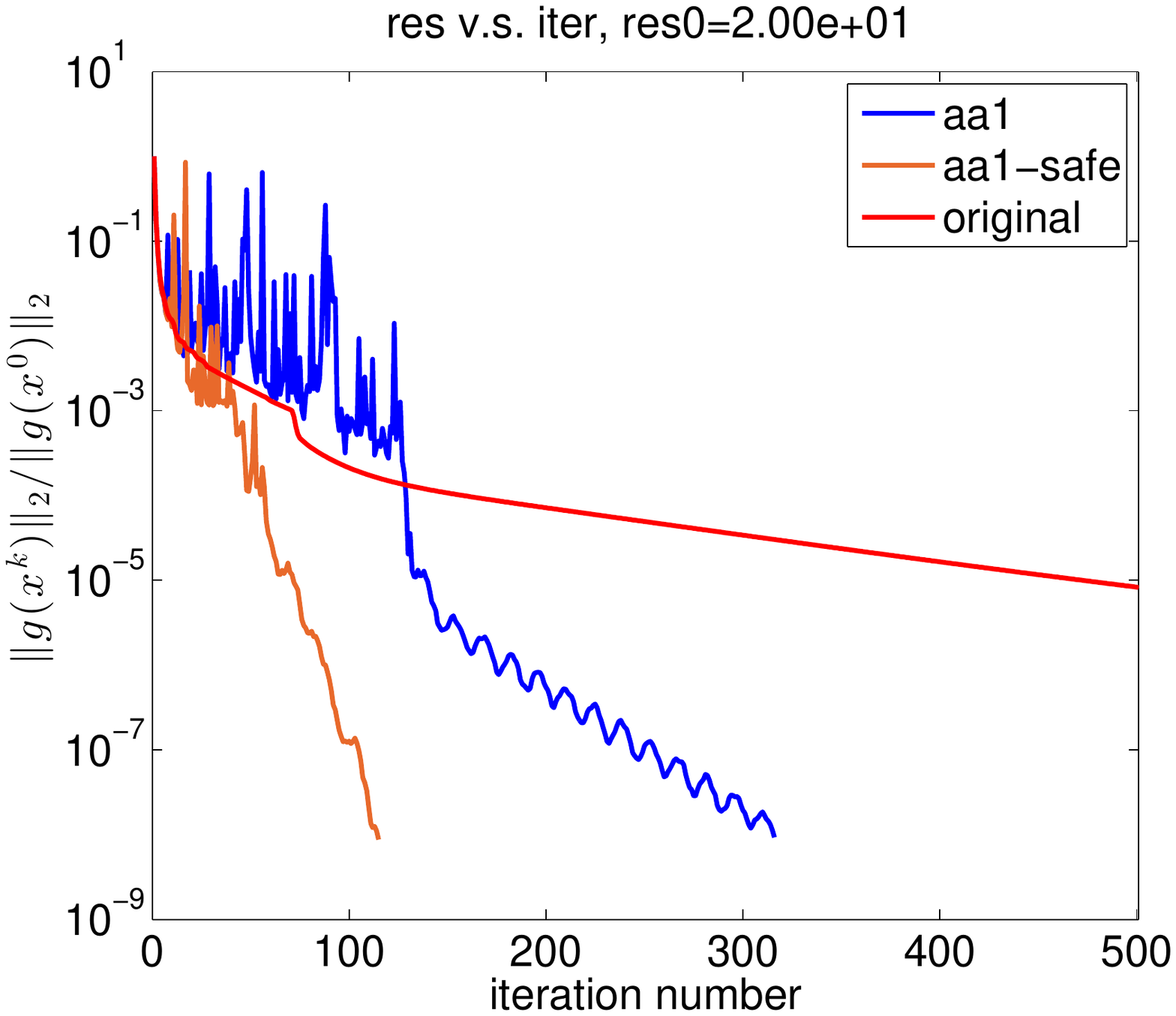}
\includegraphics[trim={1cm 6cm 1cm 6cm},clip,width=7.5cm]{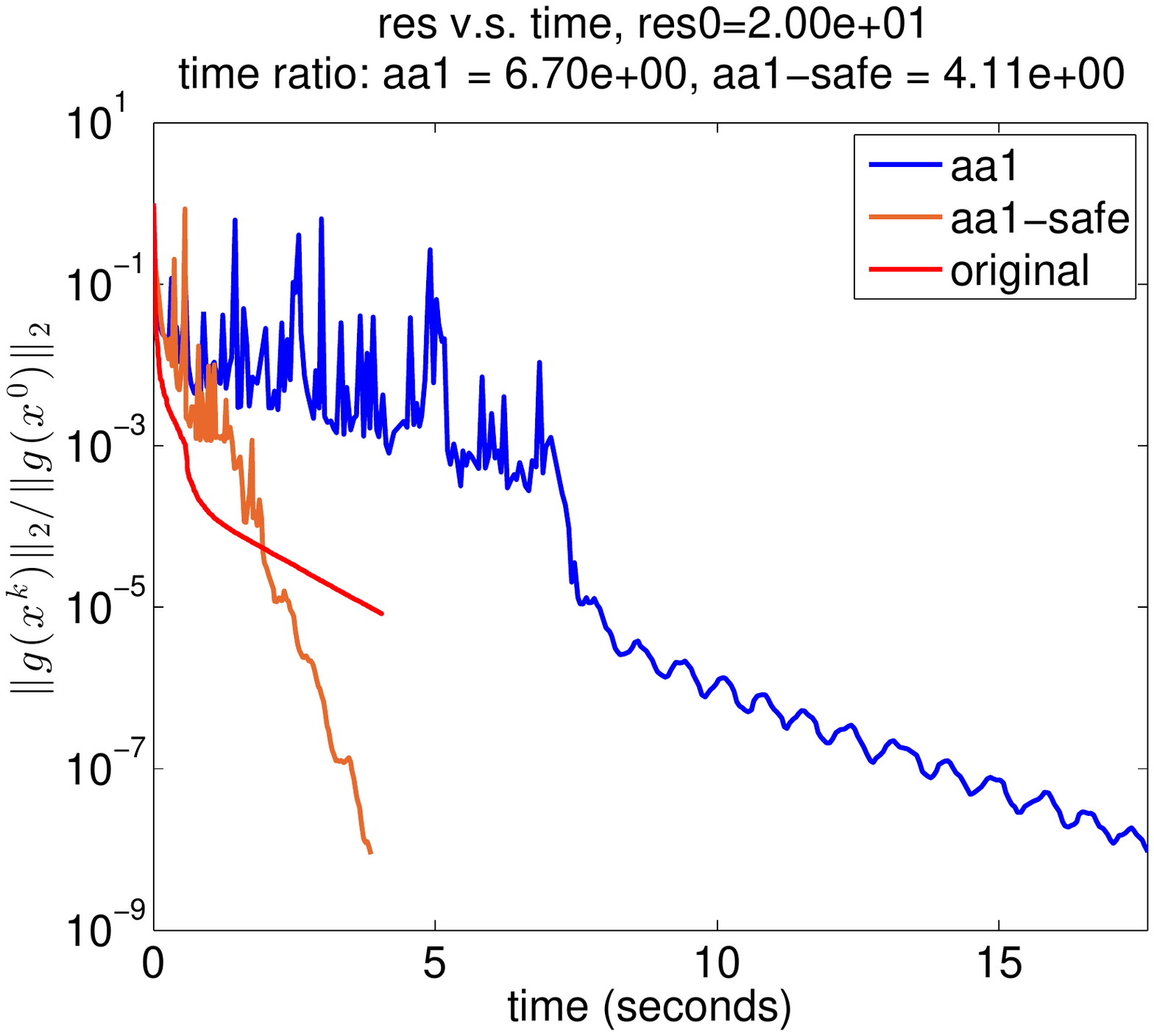}\\
\caption{CO: facility location. Left: residual norm versus iteration. 
Right: residual norm versus time (seconds).}
\label{co-facloc}
\end{figure}

We remark that in general, the $\ell_2$-norm can also be replaced with an
arbitrary $p$-norm, and more generally any function for
which the proximal operators can be easily evaluated.

\paragraph{SCS: Cone programs.} Consider (\ref{co}) with
$\mathcal{K}=\reals^m_+$ (resp.
$\mathcal{K}=\{s\in\reals^m\;|\;\|s_{1:m-1}\|_2\leq s_m\}$), \ie, a
generic LP (resp. SOCP).  We solve it using a toy implementation of SCS,
\ie, one without approximate projection, CG iterations, fine-tuned
over-relaxation and so on.

We make use of the following explicit formula for the projection onto
the second order cone
$\mathcal{K}=\{s\in\reals^m\;|\;\|s_{1:m-1}\|_2\leq s_m\}$
(\cite{Proximal}):
\[
\Pi_{\mathcal{K}}(s)=
\left\{\begin{array}{ll}
s & \text{ if } \|s_{1:n-1}\|_2\leq s_n\\
0 & \text{ if } \|s_{1:n-1}\|_2\leq -s_n\\
\frac{\|s_{1:n-1}\|_2+s_n}{2}\left[\frac{s_{1:n-1}}{\|s_{1:n-1}\|_2},
1\right]^T & \text{ otherwise.}
\end{array}\right.
\]

For both LP and SOCP, we choose $m=500$ and $n=700$, and $x^0$ is
initialized using \texttt{randn.m} and then normalized to have a unit
$\ell_2$-norm. We again follow \cite{SCS} to generate data that ensures
primal and dual feasibility of the original cone programs.

For LP, we generate $A$ as a horizontal concatenation of
\texttt{sprandn(m,$\lfloor$n/2$\rfloor$,0.1)} and identity matrix of
size $m\times \lfloor n/2\rfloor$, added with a noisy term \texttt{1e-3
* randn(m, n)}. We then generate $z^\star$ using \texttt{randn.m}, and
set $s^\star=\max(z^\star,0)$ and $y^\star=\max(-z^\star,0)$, where the
maximum is also taken component-wisely. We then also generate $x^\star$
using \texttt{randn.m}, and take $b=Ax^\star+s^\star$ and
$c=-A^Ty^\star$.

For SOCP, we similarly generate $A$ exactly the same as in LP. We then
generate $z^\star$ using \texttt{randn.m}, and set
$s^\star=\Pi_{\mathcal{K}}(z^\star)$ and $y^\star=s^\star-z^\star$,
where the maximum is also taken component-wisely. We then once again
generate $x^\star$ using \texttt{randn.m}, and take $b=Ax^\star+s^\star$
and $c=-A^Ty^\star$.

The results are summarized in Figure \ref{scs-lp} and Figure
\ref{scs-socp}.
\begin{figure}[h]
\centering
\includegraphics[trim={1cm 6cm 1cm 6cm},clip, width=7.5cm]{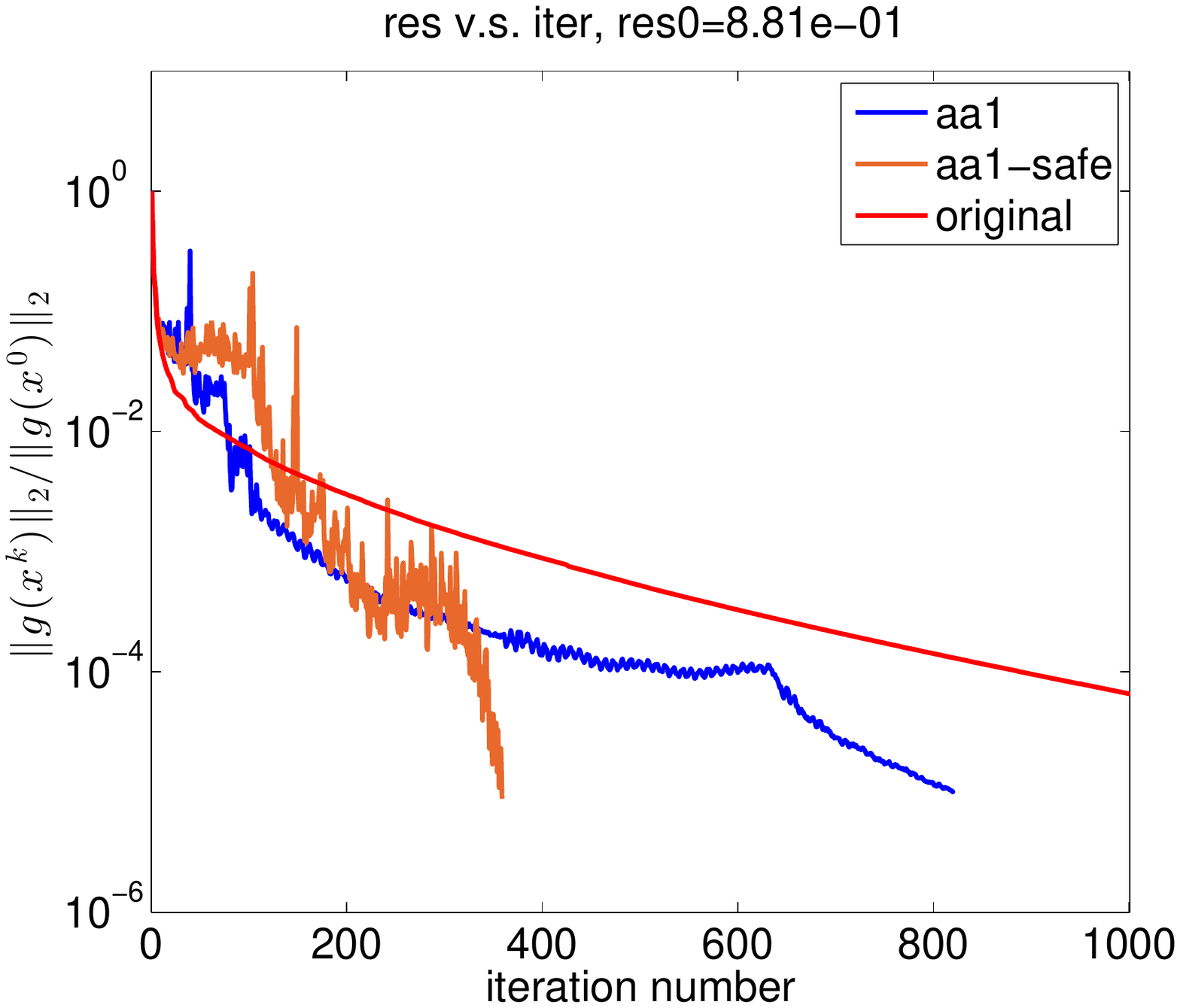}
\includegraphics[trim={1cm 6cm 1cm 6cm},clip,width=7.5cm]{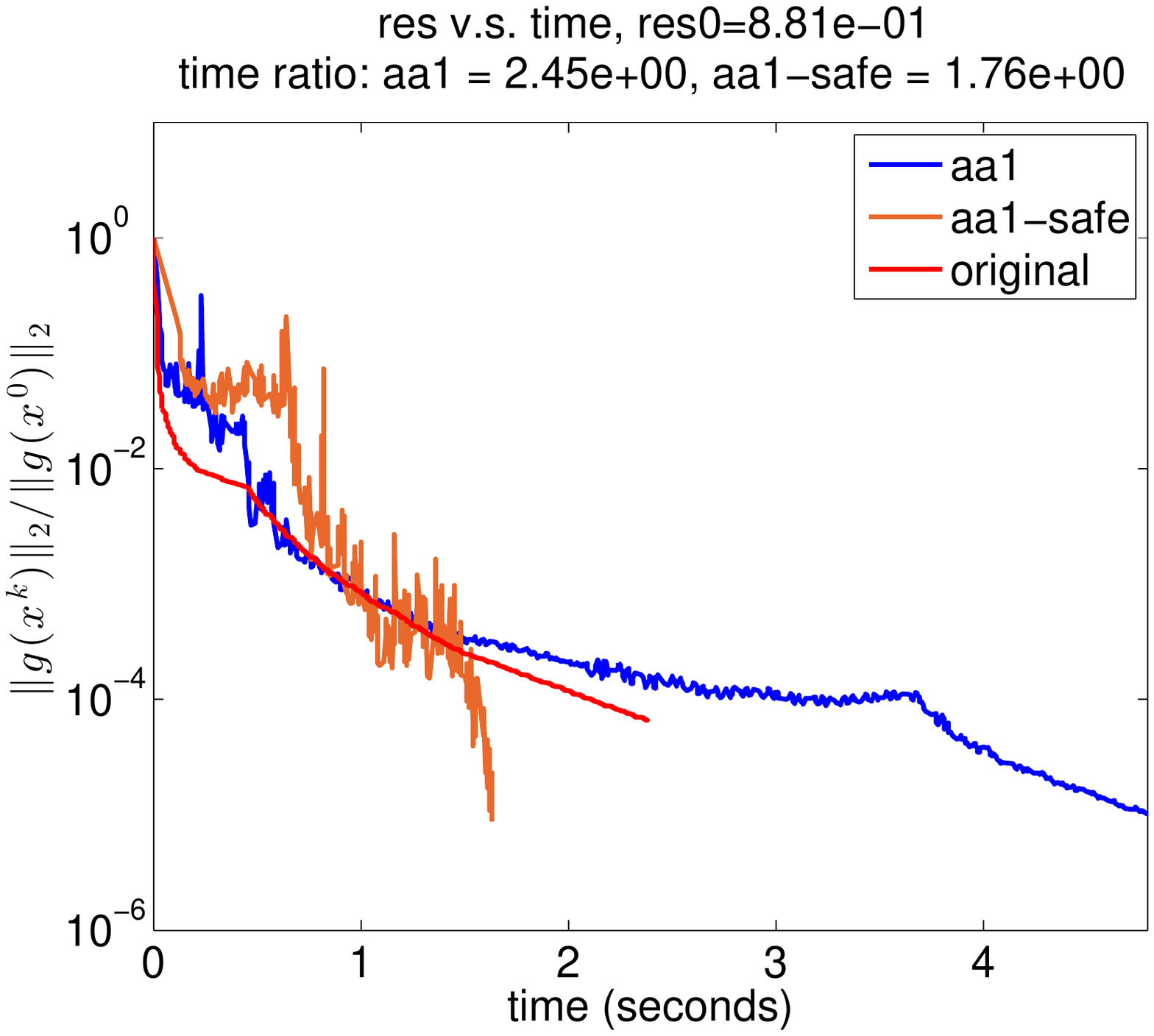}\\
\caption{SCS: LP. Left: residual norm versus iteration.
 Right: residual norm versus time (seconds).}
\label{scs-lp}
\end{figure}

\begin{figure}[h]
\centering
\includegraphics[trim={1cm 6cm 1cm 6cm},clip, width=7.5cm]{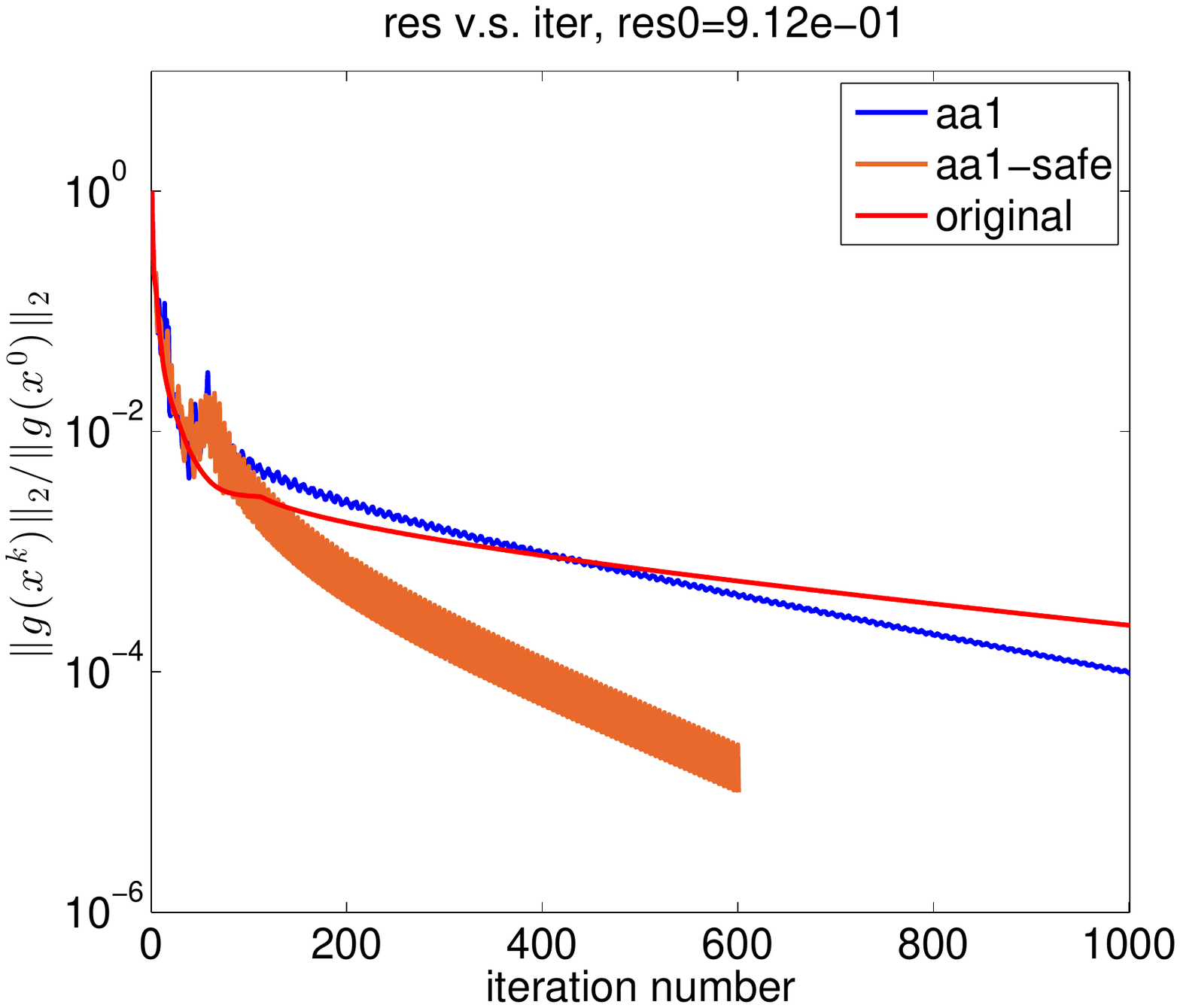}
\includegraphics[trim={1cm 6cm 1cm 6cm},clip,width=7.5cm]{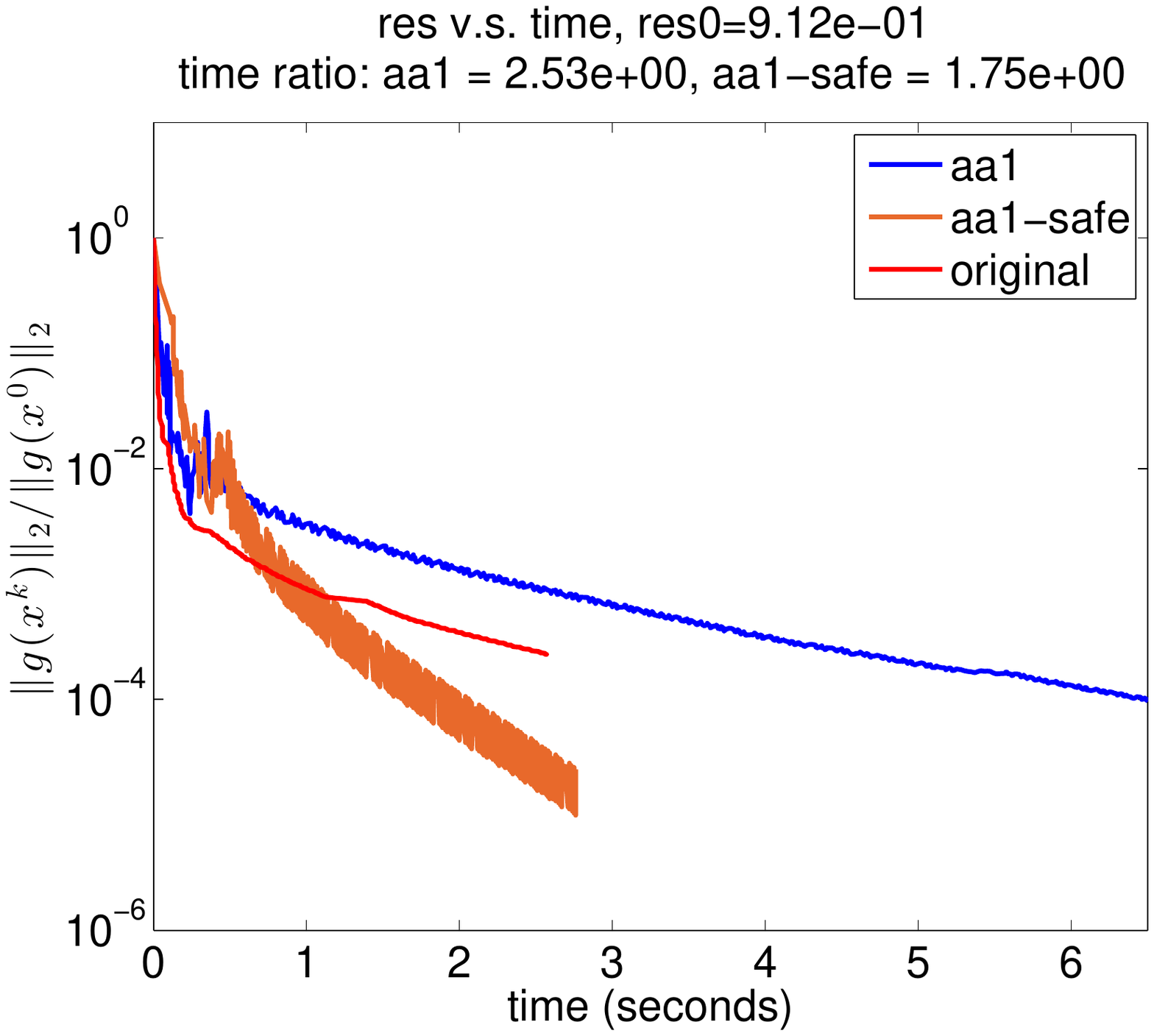}\\
\caption{SCS: SOCP. Left: residual norm versus iteration. 
Right: residual norm versus time (seconds).}
\label{scs-socp}
\end{figure}

\paragraph{VI: Markov decision process.} As described in \S
\ref{contract_norm}, we consider solving a general random Markov
decision process (MDP) using VI. In our experiments, we choose $S=300$
and $A=200$, and we choose a large discount factor $\gamma=0.99$ to make
the problem more difficult, thusly making the improvement of AA more
explicit.

The transition probability matrices $P_a\in\reals^{S\times S}$,
$a=1,\dots,A$ are first generated as \texttt{sprand}
$(S,S,0.01)+0.001I$, where $I$ is the $S$-by-$S$ identity matrix, and
then row-normalized to be a stochastic matrix. Here the addition of
$0.001I$ is to ensure that no all-zero row exists. Similarly, the reward
matrix $R\in \reals^{S\times A}$ is generated by \texttt{sprandn.m} with
sparsity $0.01$. The results are summarized in Figure \ref{vi-rand}.
Notice that the maximum iteration $K_{\max}$ is set to $50$ for better
visualization.
\begin{figure}[h]
\centering
\includegraphics[trim={1cm 6cm 1cm 6cm},clip, width=7.5cm]{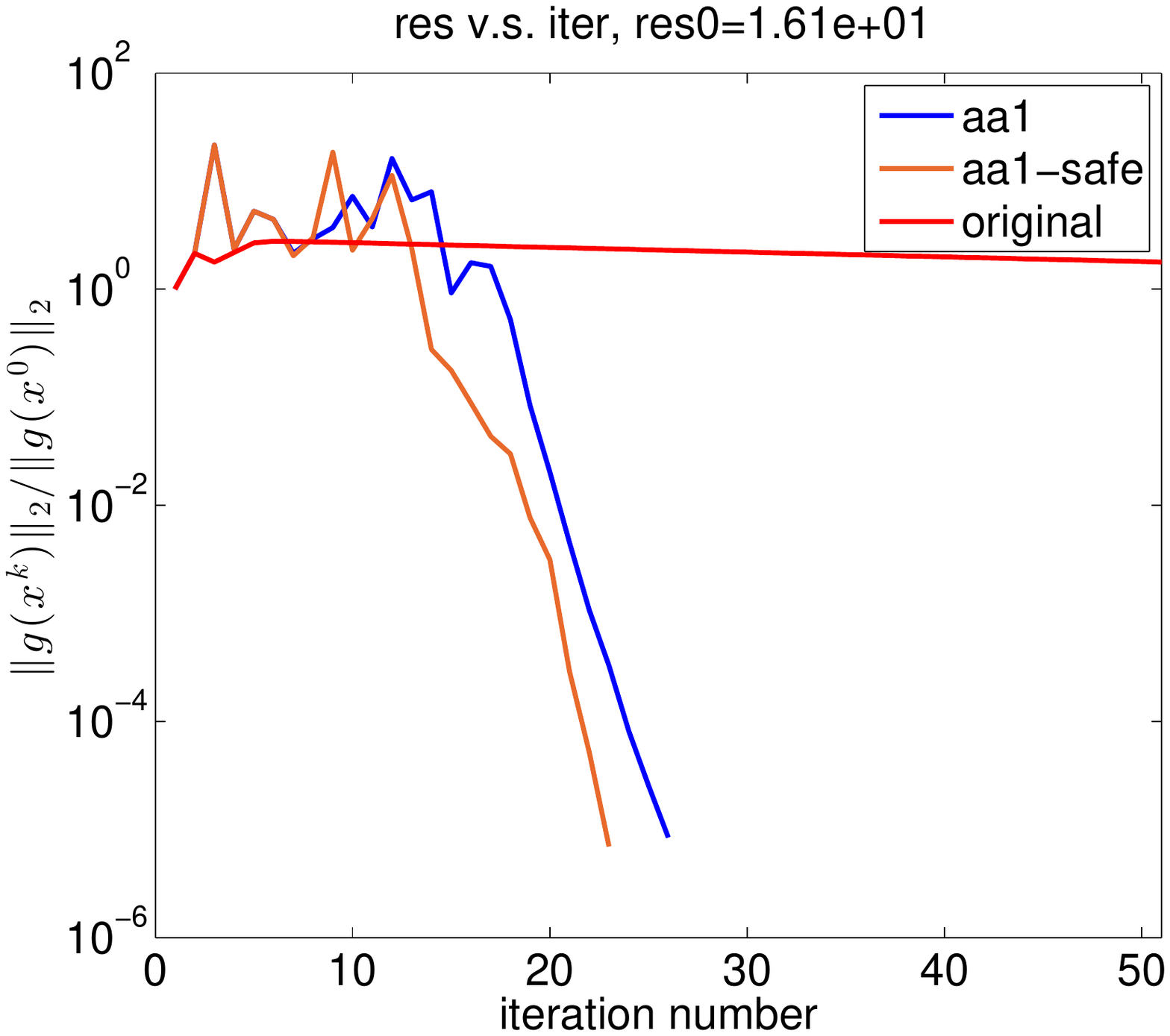}
\includegraphics[trim={1cm 6cm 1cm 6cm},clip,width=7.5cm]{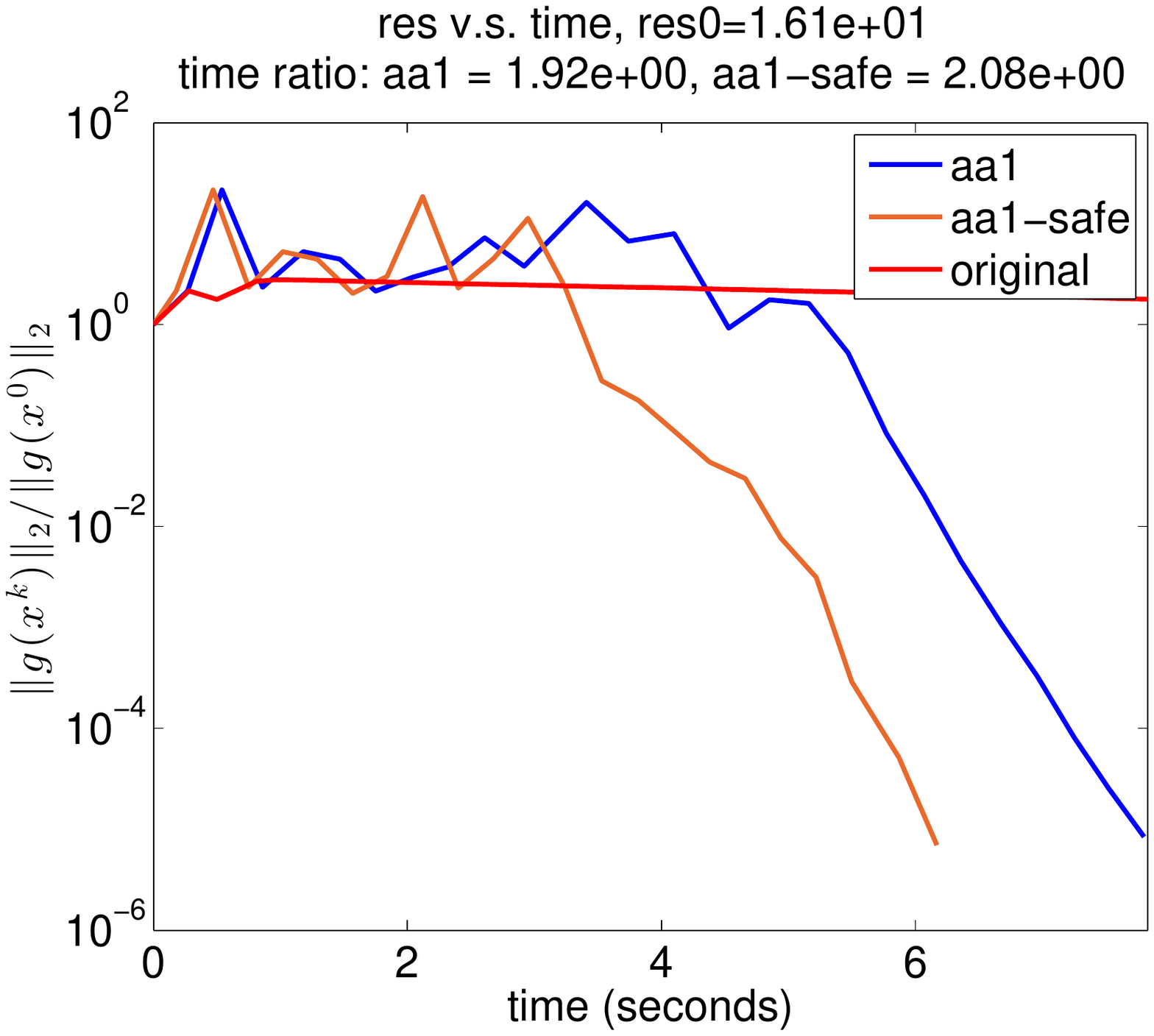}\\
\caption{VI: MDP. Left: residual norm versus iteration. 
Right: residual norm versus time (seconds).}
\label{vi-rand}
\end{figure}

It would be interesting to test the algorithms on more structured MDP,
\eg, the chain MDP, frozen lake, grid world, and more practically an
energy storage problem
(\url{http://castlelab.princeton.edu/html/datasets.htm#storagesalas}).

Moreover, it would also be interesting to see how our algorithm helps as
a sub-solver in other reinforcement learning algorithms.
 
\paragraph{Influence of memory sizes.} Finally, we rerun the VI
experiments above with different memories $m=2,~5,~10,~20,~50$. We
consider a slightly smaller problem size $S=200$ and $A=100$ here for
faster running of a single instance, which facilitates the empirical
verification of the representativeness of the plot we show here. All
other data are exactly the same as in the above example. The results are
summarized in Figure \ref{vi-rand-mem}. Notice that again the maximum
iteration $K_{\max}$ is set to $50$ for better visualization.
 \begin{figure}[h]
\centering
\includegraphics[trim={1cm 6cm 1cm 6cm},clip, width=7.5cm]{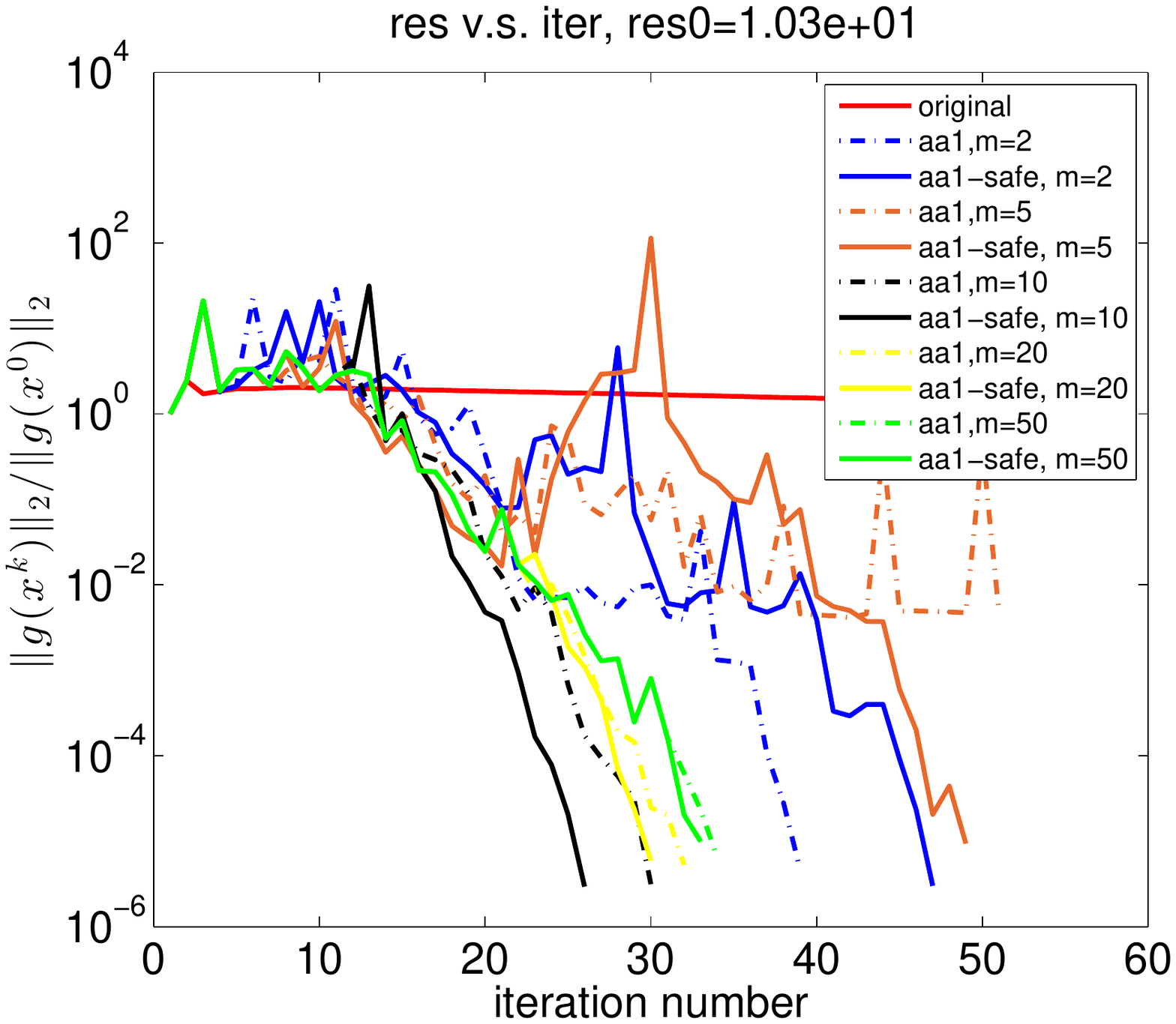}
\includegraphics[trim={1cm 6cm 1cm 6cm},clip,width=7.5cm]{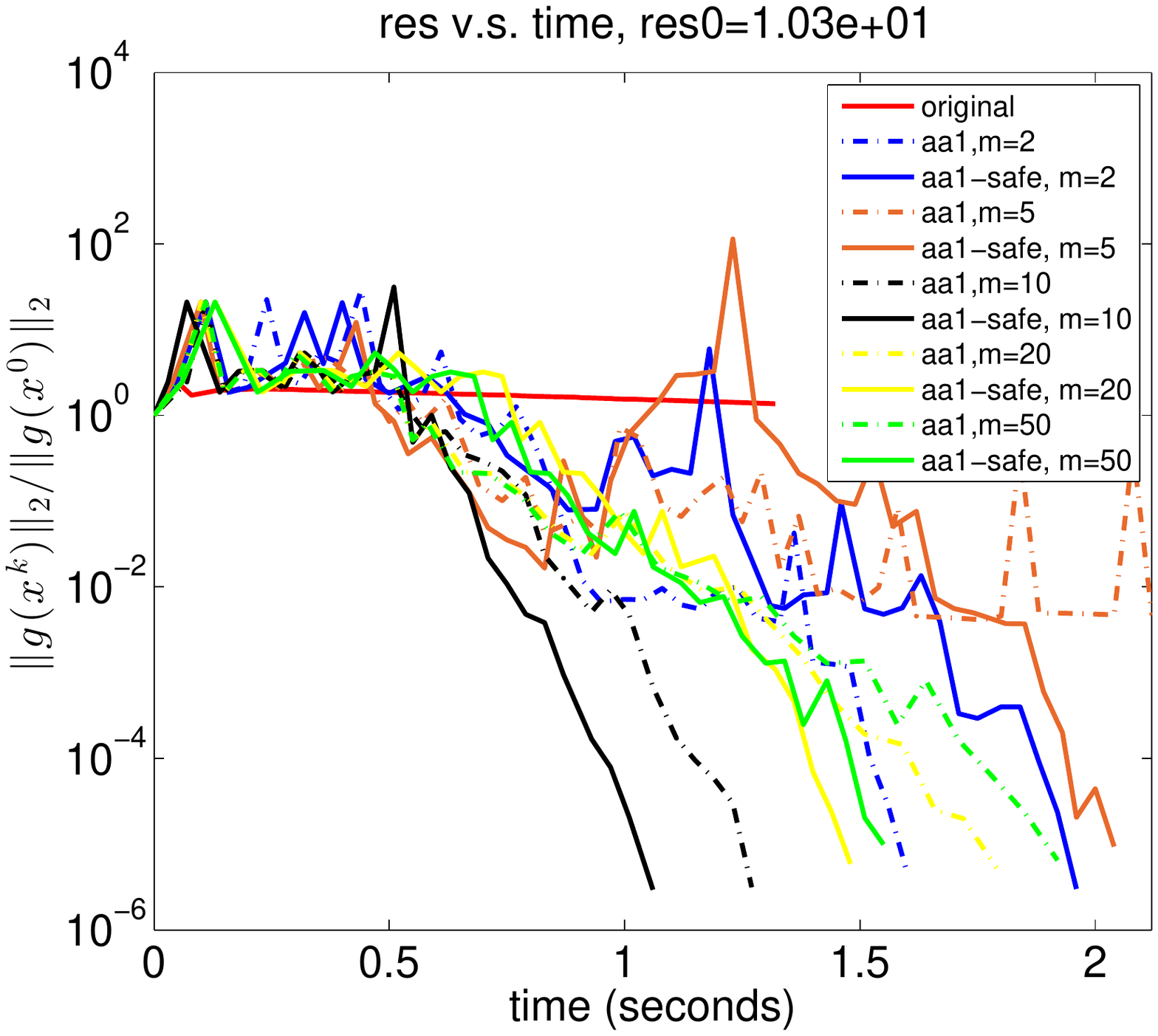}\\
\caption{VI: memory effect. Left: residual norm versus iteration. 
Right: residual norm versus time (seconds).}
\label{vi-rand-mem}
\end{figure}

We can see from the figures that the best performance is achieved (for
both AA-I-S-m and the original AA-I-m) when $m=10$, and deviating from
$10$ in either direction impedes the performance. However, choosing a
reasonably large memory size seems to be a more stable choice compared
to choosing a small one, as can be seen from the case when $m=5$.
 
 \subsubsection{Summary of numerical results}
As we have seen above, surprisingly our algorithm actually performs
better than the original AA-I-m in various cases, sometimes even when
AA-I-m does not seem to suffer much from instability (\eg, SCS for
SOCP), and the improvement is more significant when the latter does
(\eg, GD for regularized logistic regression).

In terms of running time, the safe-guard checking steps does seem to
slightly slow down our algorithm AA-I-S-m, as expected. This is more
obvious for simple problems with larger sizes (\eg, VI for MDP).
Nevertheless, due to the easiness of such problems the extra time is
still quite affordable, making this a minor sacrifice for robustness. In
addition, as in our algorithm the approximate Jacobians are computed
with rank-one updates and re-starts are invoked from time to time, it is
indeed potentially faster than the original fixed-memory AA-I-m, in
which the approximate Jacobian is computed from scratch with $m$
memories. This is also exemplified in most of our numerical experiments
shown above.

Finally, by better parallelization and GPU acceleration, the
per-iteration running time of both AA-I-S-m and  AA-I-m should be
further improved compared to the current single CPU version without any
nontrivial parallelization. This may relieve the contrary result of
acceleration and deceleration of the original AA-I-m in terms of
iteration numbers and time respectively in some examples above (\eg, CO
for facility location), and may further improve the acceleration effect
of our algorithm in terms of computational time.

\section{Extensions to more general settings}\label{ext_var}
In this section, we briefly outline some extended convergence analysis
and results of our algorithm in several more general settings, and
discuss the necessity of potential modifications of our algorithm to
better suit some more challenging scenarios. We then conclude our work
with some final remarks, and shed some light on potential future
directions to be explored.

\paragraph{Quasi-nonexpansive mappings.} A mapping
$f:\reals^n\rightarrow\reals^n$ is called \emph{quasi-nonexpansive} if
for any $y\in X$ a fixed-point of $f$, $\|f(x)-y\|_2\leq \|x-y\|_2$ for
any $x\in \reals^n$. Obviously, non-expansive mappings are
quasi-nonexpansive.

Our convergence theorems actually already hold for these slightly more
general mappings. By noticing that non-expansiveness is only applied
between an arbitrary point and a fixed-point of $f$ in the proof of
Theorem \ref{glb_conv_thm} we immediately see that the same global
convergence result hold if $f$ is only assumed to be quasi-nonexpansive.

Similarly, Theorem \ref{mdp_conv} remain true if the contractivity is
assumed only between an arbitrary point and a fixed-point of $f$, \ie,
$\|f(x)-f(y)\|\leq \gamma\|x-y\|$ for any $x\in\reals^n$ and $y\in X$,
which we term as \emph{quasi-$\gamma$-contractive}.

Formally, we have the following corollary:
\begin{corollary}\label{quasi-nonexp}
Suppose that $\{x^k\}_{k=0}^{\infty}$ is generated by Algorithm 
\ref{alg:AA-I-safe}, and instead of $f$ being non-expansive (in $\ell_2$-norm) 
in (\ref{non_eq}), we only assume that $f$ is either quasi-nonexpansive 
(in $\ell_2$-norm), or quasi-$\gamma$-contractive in some (arbitrary) norm 
$\|\cdot\|$ (\eg, $l_{\infty}$-norm) on $\reals^n$, where $\gamma\in(0,1)$. 
Then we still have $\lim_{k\rightarrow\infty}x^k=x^\star$, where 
$x^\star=f(x^\star)$ is a solution to (\ref{non_eq}). In the latter 
(quasi-$\gamma$-contractive) case, the averaging weight $\alpha$ 
can also be taken as $1$.
\end{corollary}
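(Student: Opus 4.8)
The plan is to obtain the corollary by auditing the proofs of Theorem \ref{glb_conv_thm} and Theorem \ref{mdp_conv} and checking that, wherever the full non-expansiveness (resp.\ $\gamma$-contractivity) of $f$ enters, the \emph{quasi} version --- a comparison only between a generic point and a fixed point $y\in X$ --- already suffices. In the proof of Theorem \ref{glb_conv_thm} there are exactly three such places: (i) the descent inequality (\ref{ineq2}) for the safeguarded KM steps; (ii) the uniform bound $\|H_{k_i}\|_2\le C$ from Corollary \ref{algHkbound}, whose proof (Lemma \ref{Bkbound}) uses $\|y_{k-1}\|_2\le 2\|s_{k-1}\|_2$; and (iii) the passage in Step 3 from $x^{k_i'}\to y_1$, $g(x^{k_i'})\to 0$ to $g(y_1)=0$, which uses continuity of $g=I-f$.

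For (i), expanding $f_\alpha(x)-y=(1-\alpha)(x-y)+\alpha(f(x)-y)$ and applying the identity $\|(1-\alpha)a+\alpha b\|_2^2=(1-\alpha)\|a\|_2^2+\alpha\|b\|_2^2-\alpha(1-\alpha)\|a-b\|_2^2$ gives $\|f_\alpha(x)-y\|_2^2=\|x-y\|_2^2-\alpha(1-\alpha)\|g(x)\|_2^2+\alpha(\|f(x)-y\|_2^2-\|x-y\|_2^2)$, so (\ref{ineq2}) holds as soon as $\|f(x)-y\|_2\le\|x-y\|_2$ for $y\in X$, i.e.\ under quasi-nonexpansiveness alone. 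Every other estimate of Steps 1 and 2 --- (\ref{ineq1}), (\ref{bounded}), (\ref{ineq3}), (\ref{fejer}) and the Fej\'er-type convergence of $\|x^k-y\|_2$ --- together with the algebraic part of Step 3 (using $\|x^k-y\|_2^2-\|y\|_2^2=\|x^k\|_2^2-2y^Tx^k$ evaluated at $y=y_1$ and $y=y_2$) compares an iterate only with a fixed point, so it transcribes verbatim. For (iii) the argument needs nothing beyond $g$ being continuous at each $x^\star\in X$; in the quasi-$\gamma$-contractive case this is automatic since $\|g(x)-g(x^\star)\|\le(1+\gamma)\|x-x^\star\|$, and in the quasi-nonexpansive case I would either add it as a (very mild) standing hypothesis --- it holds for every example of \S\ref{exam_aa} --- or accept it as the price of this generality.

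The point requiring real work is (ii): the bound $\|y_{k-1}\|_2\le 2\|s_{k-1}\|_2$ in Lemma \ref{Bkbound} rests on $\|f(\tilde{x}^k)-f(x^{k-1})\|_2\le\|\tilde{x}^k-x^{k-1}\|_2$, a comparison between two non-fixed iterates, so it is \emph{not} covered by quasi-nonexpansiveness. I would re-establish the uniform bound on $\|H_k\|_2$ by an induction along the trajectory: using $B_{k-1}s_{k-1}=-g_{k-1}$ (line 5) one has $\|y_{k-1}-B_{k-1}s_{k-1}\|_2=\|g(\tilde{x}^k)\|_2$, and from $\|g_{k-1}\|_2\le\max\{D\bar{U},2E\}$ (splitting on whether $x^{k-1}$ was an AA or a KM step, and using $\|g(x)\|_2\le 2\|x-y\|_2$) one gets $\tilde{x}^k$ and hence $g(\tilde{x}^k)$ bounded; controlling $\|g(\tilde{x}^k)\|_2/\|s_{k-1}\|_2$ then still needs $g$ to be small whenever $s_{k-1}$ and $g_{k-1}$ are, i.e.\ continuity of $f$ --- this is the genuine obstacle, and the cleanest resolution is again to restrict to continuous quasi-nonexpansive $f$. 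For the quasi-$\gamma$-contractive case no such subtlety arises: $f$ then has a \emph{unique} fixed point $x^\star$ (from $\|x_1^\star-x_2^\star\|=\|f(x_1^\star)-x_2^\star\|\le\gamma\|x_1^\star-x_2^\star\|$), one mirrors the appendix proof of Theorem \ref{mdp_conv} in the norm $\|\cdot\|$ using only $\|f(x)-x^\star\|\le\gamma\|x-x^\star\|$, and with $\alpha=1$ the KM step contracts by $\gamma$, while $\alpha\in(0,1)$ is also admissible at rate $(1-\alpha)+\alpha\gamma<1$.
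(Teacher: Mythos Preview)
Your plan --- audit the proofs of Theorems \ref{glb_conv_thm} and \ref{mdp_conv} and check that non-expansiveness is only ever invoked between an iterate and a fixed point --- is exactly the paper's own justification, which amounts to the single sentence preceding the corollary. You have, however, carried out the audit more carefully than the paper does, and in doing so you have flagged a genuine issue the paper glosses over: the uniform bound $\|H_{k}\|_2\le C$ comes through Corollary \ref{algHkbound} from Lemma \ref{Bkbound}, and the proof of that lemma uses $\|y_{k-m_k+i}\|_2\le 2\|s_{k-m_k+i}\|_2$, which is a Lipschitz estimate between \emph{two arbitrary iterates} $\tilde{x}^{k-m_k+i+1}$ and $x^{k-m_k+i}$, not between an iterate and a fixed point. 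Your observation (iii) about continuity of $g$ in Step 3 is a second such place. The paper's claim that ``non-expansiveness is only applied between an arbitrary point and a fixed-point'' is therefore not quite accurate.

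That said, your proposed repair for (ii) does not close the gap. Even assuming $f$ continuous, what you must bound uniformly in $k$ is the ratio $\|g(\tilde{x}^k)\|_2/\|s_{k-1}\|_2$, and continuity alone cannot control a ratio: if $\|s_{k-1}\|_2\to 0$ along some subsequence there is no reason $\|g(\tilde{x}^k)\|_2$ should vanish at the same rate without a Lipschitz-type estimate between $\tilde{x}^k$ and $x^{k-1}$, which is precisely what quasi-nonexpansiveness fails to supply. The same obstruction is present in the quasi-$\gamma$-contractive case, since the appendix proof of Theorem \ref{mdp_conv} also invokes Corollary \ref{algHkbound}; quasi-contractivity gives $\|f(x)-x^\star\|\le\gamma\|x-x^\star\|$ but not $\|f(\tilde{x}^k)-f(x^{k-1})\|\le C\|s_{k-1}\|$. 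In short, you have correctly located a gap that the paper's one-line argument overlooks, but neither your sketch nor the paper actually fills it; the corollary as stated seems to require either an additional Lipschitz hypothesis on $f$ or an entirely different route to the $H_k$ bound.
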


\paragraph{Iteration-dependent mappings.} Consider the case when the
mapping $f$ varies as iteration proceeds, \ie, instead of a fixed $f$,
we have $f_k:\reals^n\rightarrow\reals^n$ for each $k=0,1,\dots$. The
goal is to find the common fixed-point of all $f_k$ (assuming that it
exists), \ie, finding $x^\star\in \cap_{k\geq 0}X_k$ with $X_k$ being
the fixed-point set of $f_k$. For example, in GD, we may consider a
changing (positive) step size, which will result in a varying mapping
$f$. However, the common fixed-point of all $f_k$ is still exactly the
optimal solution to the original optimization problem. In fact, all
$f_k$ have the same fixed-point set.

Assuming non-expansiveness (actually quasi-nonexpansiveness suffices) of
each $f_k$, $k\geq 0$, and that the fixed-point set $X_k=X$ of $f_k$ is
the same across all $k\geq 0$, both of which hold for GD with positive
varying step sizes described above, we can still follow exactly the same
steps 1 and 2 of the proof for Theorem \ref{glb_conv_thm} to obtain that
$\|g_k\|_2\rightarrow 0$ as $k\rightarrow\infty$, where
$g_k=x^k-f_k(x^k)$, and that $\|x^k-y\|_2$ converges for any fixed-point
$y\in X$.

Unfortunately, in general step 3 does not go through with these changing
mappings. However, if we in addition assume that for any sequence
$x^k\in \reals^n$, $\lim_{k\rightarrow\infty}\|x^k-f_k(x^k)\|_2=0$ and
$x^k\rightarrow\bar{x}$ $\Rightarrow \bar{x}\in X$, then any limit point
of $x^k$ is a common fixed-point of $f_k$'s in $X$. The rest of step 3
then follows exactly unchanged, which finally shows that Theorem
\ref{glb_conv_thm} still holds in this setting.

Formally, we have the following corollary:
\begin{corollary}\label{common_fix}
Suppose that $f_k:\reals^n\rightarrow\reals^n$, $k\geq 0$ are all 
quasi-nonexpansive, and that the fixed-point sets $X_k=\{x\in 
\reals^n\;|\;f_k(x)=x\}$ of $f_k$ are equal to the same set 
$X\subseteq \reals^n$. Assume in addition that for any sequence 
$\{z^k\}_{k=0}^{\infty}\subseteq \reals^n$, if $\lim_{k\rightarrow\infty}
\|z^k-f_k(z^k)\|_2=0$ and $z^k\rightarrow\bar{z}$ for some 
$\bar{z}\in \reals^n$, then $\bar{z}\in X$.
Suppose that $\{x^k\}_{k=0}^{\infty}$ is generated by Algorithm 
\ref{alg:AA-I-safe}, with $f$ replaced with $f_k$ in iteration $k$. 
Then we have $\lim_{k\rightarrow\infty}x^k=x^\star$, where 
$x^\star=f(x^\star)$ is a solution to (\ref{non_eq}). 
\end{corollary}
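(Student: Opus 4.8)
The plan is to recycle the three-step architecture of the proof of Theorem~\ref{glb_conv_thm} as literally as possible, isolating the single place where a genuinely new idea is needed. Write $g_k = x^k - f_k(x^k)$ for the residual of the map active at step $k$, and fix throughout an arbitrary $y \in X$; since $X_k = X$ for all $k$, this $y$ is simultaneously a fixed point of \emph{every} $f_k$. As already observed for Corollary~\ref{quasi-nonexp}, the two places where the fixed-point structure enters Steps~1 and~2 --- the averaged (KM) contraction inequality and the uniform bound on $H_k$ --- only ever compare an iterate with such a common fixed point, so they are indifferent to $f$ varying with $k$; it is only Step~3, where the \emph{identity} of the limit is pinned down through continuity of $x \mapsto x - f(x)$, that must be redone.

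For Step~1 I would first note that the uniform bound $\|H_k\|_2 \le C$ of Corollary~\ref{algHkbound} continues to hold, its proof relying only on the arbitrariness of the update sequence $\{s_i\}$ and on the estimate $\|y_i\|_2 \le 2\|s_i\|_2$, both of which survive in the iteration-dependent setting. The AA-branch estimate $\|x^{k_i+1}-y\|_2 \le \|x^{k_i}-y\|_2 + C\|g_{k_i}\|_2$ with $\|g_{k_i}\|_2 \le D\bar U (i+1)^{-(1+\epsilon)}$ then holds by the safeguard test in line~12, and the KM-branch estimate follows by expanding $\|(1-\alpha)(x^{l_i}-y) + \alpha(f_{l_i}(x^{l_i})-y)\|_2^2$ and using only that $f_{l_i}$ is quasi-nonexpansive with respect to $y$, giving $\|x^{l_i+1}-y\|_2^2 \le \|x^{l_i}-y\|_2^2 - \alpha(1-\alpha)\|g_{l_i}\|_2^2$. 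Telescoping exactly as in Theorem~\ref{glb_conv_thm} yields boundedness of $\|x^k-y\|_2$, the summability $\sum_i \|g_{l_i}\|_2^2 < \infty$, hence $\|g_k\|_2 \to 0$, together with the perturbed Fej\'er inequality $\|x^{k+1}-y\|_2^2 \le \|x^k-y\|_2^2 + \epsilon_k$ with $\sum_k \epsilon_k < \infty$. Step~2 then applies this last inequality verbatim to conclude that $\alpha(y) := \lim_{k\to\infty}\|x^k-y\|_2^2$ exists for every $y \in X$, and in particular that $\{x^k\}$ is bounded.

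Step~3 is where the work --- and the extra hypothesis --- lies. Boundedness of $\{x^k\}$ furnishes a convergent subsequence $x^{k_i} \to \bar x$; since $\|x^{k_i} - f_{k_i}(x^{k_i})\|_2 = \|g_{k_i}\|_2 \to 0$ by Step~1, the demiclosedness-type assumption forces $\bar x \in X$. This is exactly the substitute for the continuity argument of Theorem~\ref{glb_conv_thm}, and the only delicate point is that the hypothesis, though stated for whole sequences, must be applied along the subsequence $\{k_i\}$ --- i.e.\ read as a demiclosedness condition at limit points of $\{x^k\}$. Once every limit point is known to lie in $X$, the closing argument of Theorem~\ref{glb_conv_thm} carries over unchanged: if two distinct limit points $y_1 \ne y_2$ existed, both would lie in $X$, and from $\|x^k-y\|_2^2 - \|y\|_2^2 = \|x^k\|_2^2 - 2y^T x^k$ together with the existence of $\alpha(y)$ one gets $2y^T(y_1-y_2) = \lim_i\|x^{k_i'}\|_2^2 - \lim_i\|x^{l_i'}\|_2^2$ for all $y \in X$; taking $y = y_1$ and $y = y_2$ yields $(y_1-y_2)^T(y_1-y_2) = 0$, a contradiction. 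Hence $x^k$ converges to a unique $\bar x \in X$, \ie\ $\bar x = f_k(\bar x)$ for every $k$, which is the common fixed point asserted in the statement. The anticipated obstacle is thus confined to Step~3 and is precisely what the demiclosedness hypothesis removes; Steps~1 and~2 are routine transcriptions of the corresponding parts of Theorem~\ref{glb_conv_thm}.
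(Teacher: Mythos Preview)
Your proposal is correct and follows essentially the same approach as the paper: Steps~1 and~2 of Theorem~\ref{glb_conv_thm} carry over under quasi-nonexpansiveness with a common fixed-point set, and Step~3 invokes the demiclosedness-type hypothesis to place every limit point in $X$ before the two-cluster-point contradiction finishes. Your explicit remark that the hypothesis must be read along subsequences is a useful clarification the paper leaves implicit.
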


Although the additional assumption about ``$z^k$'' seems to be a bit
abstract, it does hold if we nail down to the aforementioned specific
case, the GD example with varying step sizes, \ie,
$f_k(x^k)=x^k-\alpha^k\nabla F(x^k)$, and if we assume in addition that
the step size $\alpha^k$ is bounded away from $0$, \ie, $\alpha^k\geq
\epsilon>0$ for some positive constant $\epsilon$ for all $k\geq 0$.

In fact,  by
$\lim_{k\rightarrow\infty}\|g_k\|_2=\lim_{k\rightarrow\infty}\|x^k-f_k(x^k)\|_2=0$,
we have $\lim_{k\rightarrow\infty}\alpha^k\|\nabla F(x^k)\|_2=0$, which
implies that $\lim_{k\rightarrow\infty}\|\nabla F(x^k)\|_2=0$ as
$\alpha^k\geq\epsilon>0$.  In particular, any limit point $\bar{x}$ of
$x^k$ satisfies $\nabla F(\bar{x})=0$ by the continuity of $\nabla F$
assumed in \S \ref{prox_grad}, \ie, $\bar{x}\in X$. Hence we see that
the assumptions made in Corollary \ref{common_fix} all hold in this
example, and hence global convergence of $x^k$ is ensured.

A similar analysis can be carried out to reprove Theorem \ref{mdp_conv}
in this setting.

Nevertheless, it remains open what assumptions are needed in general to
obtain global convergence as in Theorem \ref{glb_conv_thm} and Theorem
\ref{mdp_conv}. In particular, the above analysis fails if $\alpha^k$ is
vanishing, which may arise in many practical cases, \eg, training of
deep neural networks (using stochastic algorithms, which is to be
discussed below). It might be true that some adaptive mechanisms need to
be included in the design of our algorithm to fully fit to this changing
mapping scenario.

\paragraph{Non-expansive mappings in non-Euclidean norms.} Theorem
\ref{mdp_conv} establishes global convergence for contractive mappings
in arbitrary norms. It is hence natural to ask what happens if $f$ is
only non-expansive (instead of contractive) in an arbitrary norm
different from the $\ell_2$-norm. More generally, the norm in which the
mapping $f$ is non-expansive or contractive may also change as the
iterations proceed, which is exactly the case if we perform the same
analysis of HB for a general strongly convex and strongly smooth
objective function. In general, finding out the additional assumptions
needed for the global convergence of the current algorithm in these
settings, or a way to further modify our algorithm to work here if
necessary, may largely contribute to a more flexible algorithm.

\section{Conclusions}\label{conclusion}
In this paper, we modify the type-I Anderson acceleration (AA-I) to propose
a globally convergent acceleration algorithm that works for general
non-expansive non-smooth fixed-point problems, with no additional
assumptions required. We list 9 problem-algorithm combinations, each
supported by one or more concrete problem instances. Our extensive
numerical results show that our modified algorithm is not only more
robust, but also more efficient than the original AA-I. Finally,
extensions to different settings are discussed, and in particular
another theorem is established to ensure global convergence of our
algorithm on value iteration in MDPs and heavy ball methods in QPs.

Despite the success of our algorithm both in theory and practice,
several problems remain open. In particular, the convergence of our
proposed algorithm on general momentum methods (\eg, HB for general
convex constrained optimization, and Nesterov's accelerated gradient
descent), and the potential modifications needed in the absence of such
convergence, deserves a more thorough study.  In addition, it is also
interesting to see how our algorithm performs in stochastic settings,
\ie, when the evaluation of $f$ is noisy. Moreover, some popular
algorithms are still ruled out from our current scenario, \eg,
Frank-Wolfe and Sinkhorn-Knopp. It is thus desired to push our algorithm
more beyond non-expansiveness (apart from quasi-nonexpansiveness and
contractivity in a non-Euclidean norm) to incorporate these interesting
examples, which may also help address the convergence for algorithms in
non-convex optimization settings. In the meantime, a theoretical
characterization of the acceleration effect of our algorithm is still
missing, and in particular no convergence rate has been established for
our algorithm. Although some partial unpublished results relying on
certain differentiability has been obtained by us, it remains super
challenging how we can include all the (non-smooth) mappings listed in
\S \ref{exam_aa} into the assumptions.  Last but not least, numerical
tests with larger sizes and real-world datasets, and more systematic
comparisons with other acceleration methods (\eg, AA-II), are yet to be
conducted, which may finally contribute to a new automatic acceleration
unit for general existing solvers.

\section{Acknowledgements} We thank Ernest Ryu for his comments on the
 possibility and difficulty of analyzing Frank-Wolfe and Nesterov's 
 algorithms in fixed-point mapping frameworks. We thank Tianyi Lin 
 for his advice on generating appropriate random data for LPs, and 
 his comments on the challenge of convergence order analysis in 
 quasi-Newton methods. We thank Qingyun Sun for the general discussions 
 in the early stage of the project, especially those related to 
 interpreting AA as multi-secant methods \cite{FangSecant, WalkerNi}.
  We also thank Michael Saunders for his constant encouragement and 
  positive feedback on the progress of the project on a high level. 
  We are also grateful to Zaiwen Wen for the inspiring discussions on 
  related literature. Last but not least, we also thank Anran Hu for 
  her suggestions on the organization and presentation of the paper, 
  and her suggestion on considering the heavy ball algorithm.

\newpage

\bibliographystyle{alpha}
\bibliography{scs_2.0}

\newpage
\appendix
\section*{Appendices}
\addcontentsline{toc}{section}{Appendices}
\renewcommand{\thesubsection}{\Alph{subsection}}
\paragraph{Proof of Proposition \ref{r1-update}.}
\begin{proof}
Suppose that $Z_k\in \reals^{n\times n-m_k}$ is a basis of 
$\text{span}(S_k)^{\perp}$. 
Then from (\ref{Bk}), we see that $B_kS_k=Y_k$ and $B_kZ_k=Z_k$. 
Now we prove that $B_k^{m_k}$ also satisfies these linear equations. 

Firstly, we show by induction that $B_k^iS_k^i=Y_k^i$, where 
$S_k^i=(s_{k-m_k},\dots,s_{k-m_k+i-1})$ and $Y_k^i=(y_{k-m_k},
\dots,y_{k-m_k+i-1})$, $i=1,\dots,m_k$. 

\textit{Base case.} For $i=1$, we have $B_k^0=I$, $S_k^1=s_{k-m_k}$ 
and $Y_k^1=y_{k-m_k}$, and hence
\[
B_k^1S_k^1=B_k^1s_{k-m_k}=s_{k-m_k}+\dfrac{(y_{k-m_k}-s_{k-m_k})
\hat{s}_{k-m_k}^Ts_{k-m_k}}{\hat{s}_{k-m_k}^Ts_{k-m_k}}=y_{k-m_k}=Y_k^1.
\]

\textit{Induction.} Suppose that we have proved the claim for $i=l$. 
Then for $i=l+1$, 
\[
B_k^{l+1}S_k^{l}=B_k^{l}S_k^{l}+\dfrac{(y_{k-m_k+l}-B_k^{l}s_{k-m_k+l})
\hat{s}_{k-m_k+l}^TS_k^{l}}{\hat{s}_{k-m_k+l}^Ts_{k-m_k+l}}=Y_k^{l},
\]
where we used the hypothesis and the fact that by orthogonalization, 
$\hat{s}_{k-m_k+l}^TS_k^{l}=0$. 

In addition, we also have
\[
B_k^{l+1}s_{k-m_k+l}=B_k^{l}s_{k-m_k+l}+\dfrac{(y_{k-m_k+l}-B_k^{l}
s_{k-m_k+l})\hat{s}_{k-m_k+l}^Ts_{k-m_k+l}}{\hat{s}_{k-m_k+l}^Ts_{k-m_k+l}}
=y_{k-m_k+l},
\]
which shows that $B_k^{l+1}S_k^{l+1}=Y_k^{l+1}$ together with the 
equalities above. This completes the induction, and in particular shows that 
\[
B_k^{m_k}S_k=B_k^{m_k}S_k^{m_k}=Y_k^{m_k}=Y_k.
\]

Secondly, we show that $B_k^{m_k}Z_k=Z_k$. To see this, notice that by
 $\text{span}(\hat{s}_{k-m_k},\dots,\hat{s}_{k-1})=\text{span}(S_k)$, 
 we have $\hat{s}_{k-i}^TZ_k=0$ ($i=1,\dots,m_k$), and hence
\[
B_k^{m_k}Z_k=Z_k+\sum_{i=0}^{m_k-1}\dfrac{(y_{k-m_k+i}-B_k^{i}s_{k-m_k+i})
\hat{s}_{k-m_k+i}^TZ_k}{\hat{s}_{k-m_k+i}^Ts_{k-m_k+i}}=Z_k.
\]

Finally, since $(S_k,Z_k)$ is invertible, we see that the equation 
$B(S_k,Z_k)=(Y_k,Z_k)$ has a unique solution, and hence $B_k=B_k^{m_k}$.
\end{proof}

\paragraph{Proof of Theorem \ref{mdp_conv}.}
\begin{proof} Below we use the same notation for the vector norm $\|\cdot\|$
 on $\reals^n$ and its induced matrix norm on $\reals^{n\times n}$, \ie, 
 $\|A\|=\sup_{x\neq 0}\|Ax\|/\|x\|$ for any $A\in\reals^{n\times n}$. Again, 
 we partition the iteration counts into two subsets accordingly, with 
 $K_{AA}=\{k_0,k_1,\dots\}$ being those iterations that passes line 12, 
 while $K_{KM}=\{l_0,l_1,\dots\}$ being the rest that goes to line 14. 
 Denote as $y$ the unique fixed point of $f$ in (\ref{non_eq}), where 
 the uniqueness comes from contractivity of $f$. 

The proof is completed by considering two scenarios separately. The
first is when $K_{KM}$ is \emph{finite}, in which case the proof is
identical to Theorem \ref{glb_conv_thm}, as neither the contractivity
nor the non-expansiveness comes into play after a finite number of
iterations. The second is when $K_{KM}$ is \emph{infinite}, the proof 
of
which is given below.

Suppose from now on that $K_{KM}$ is an infinite set.

On one hand, for $k_i\in K_{AA}$, by Corollary \ref{algHkbound} and norm
equivalence on $\reals^n$, we have $\|H_{k_i}\|\leq C'$ for some
constant $C'$ independent of the iteration count, and similarly
$\|g_{k_i}\|\leq C^{''}\|g_{k_i}\|_2\leq
C^{''}D\bar{U}(i+1)^{-(1+\epsilon)}$. Hence
\BEQ\label{ineq1'}
\begin{split}
\|x^{k_i+1}-y\|&\leq \|x^{k_i}-y\|+\|H_{k_i}g_{k_i}\|\\
&\leq \|x^{k_i}-y\|+C'\|g_{k_i}\|\leq \|x^{k_i}-y\|+\underbrace{C'C^{''}D
\bar{U}(i+1)^{-(1+\epsilon)}}_{\epsilon_{k_i}'}.
\end{split}
\EEQ

On the other hand, for $l_i\in K_{KM}$ ($i\geq 0$), (\ref{ineq2}) does not 
hold anymore. Instead, we have by $\gamma$-contractivity that 
\BEQ\label{ineq2'}
\|x^{l_i+1}-y\|\leq \gamma \|x^{l_i}-y\|\leq \|x^{l_i}-y\|.
\EEQ

Hence by defining $\epsilon_{l_i}'=0$, we again see from (\ref{ineq1'})
and (\ref{ineq2'}) that
\BEQ\label{fejer'}
\|x^{k+1}-y\|\leq \|x^k-y\|+\epsilon_k',
\EEQ with $\epsilon_k'\geq 0$ and
$\sum_{k=0}^{\infty}\epsilon_k'=\sum_{i=0}^{\infty}\epsilon_{k_i}'<\infty$.
\vspace{0.1cm}

Now define $a_j=\sum_{l_j+1\leq
k<l_{j+1}}\epsilon_k'=\sum\nolimits_{l_j\leq
k_i<l_{j+1}}\epsilon_{k_i}'$. Then we have
$\sum_{j=0}^{\infty}a_j=\sum_{k=0}^{\infty}\epsilon_k'<\infty$, and in
particular $\lim_{j\rightarrow\infty}a_j=0$ and $0\leq a_j\leq E'$ for
some $E'>0$.  Then we have
\BEQ\label{contract_bd}
\begin{split}
\|x^{l_i}-y\|\leq \|x^{l_{i-1}+1}-y\|+\sum\nolimits_{l_{i-1}+1\leq 
k<l_i}\epsilon_k'\leq \gamma\|x^{l_{i-1}}-y\|+a_{i-1}.
\end{split}
\EEQ

By telescoping the above inequality, we immediately see that
\BEQ\label{contract_bd2}
\begin{split}
\|x^{l_i}-y\|&\leq \gamma^i\|x^{l_0}-y\|+\sum_{k'=0}^{i-1}
\gamma^{k'}a_{i-1-k'}\\
&\leq \gamma^i\|x^0-y\|+E'\sum_{k'=\lfloor (i-1)/2\rfloor}^{i-1}
\gamma^{k'}+\sum_{k'=0}^{\lfloor (i-1)/2\rfloor-1}a_{i-1-k'}\\
&\leq \gamma^i\|x^0-y\|+\frac{E'}{1-\gamma}\gamma^{\lfloor (i-1)/2
\rfloor}+\sum_{k'=\lceil (i-1)/2\rceil}^{\infty}a_{k'}
\end{split}
\EEQ where we used the fact that $l_0=0$ by Line 2 of Algorithm
\ref{alg:AA-I-safe}. In particular, by using the fact that
$\gamma^i\rightarrow 0$ as $i\rightarrow \infty$, and that
$\sum_{k'=k}^{\infty}a_{k'}\rightarrow 0$ as $k\rightarrow\infty$, we
see that
\BEQ\label{lim-li}
\lim_{i\rightarrow\infty}\|x^{l_i}-y\|=0.
\EEQ

Finally, for any $k>0$, define $i_k=\argmax_i~\{l_i<k\}$. Then we have
$\lim_{k\rightarrow\infty}i_k=\infty$ as $K_{KM}$ is infinite, and
moreover, $l_{i_k+1}\geq k$. Hence we obtain that
\BEQ\label{reduction}
\begin{split}
\|x^k-y\|&\leq \|x^{l_{i_k}+1}-y\|+\sum\nolimits_{l_{i_k}+1\leq k'\leq 
k-1}\epsilon_{k'}'\\
&\leq \gamma\|x^{l_{i_k}}-y\|+a_{i_k},
\end{split}
\EEQ from which and (\ref{lim-li}) we immediately conclude that
\BEQ\label{lim-k}
\lim_{k\rightarrow\infty}\|x^k-y\|=0,
\EEQ
\ie, $\lim_{k\rightarrow\infty}x^k=y$, where $y$ is the unique solution
of (\ref{non_eq}). This completes our proof.
\end{proof}

\end{document}